\tikzset{
  vertex/.style={circle,minimum size=3pt,inner sep=0,fill=black},
  overtex/.style={circle,minimum size=3pt,inner sep=0,fill=white,draw},
  train/.style={decoration={
    markings,% switch on markings
    mark=% actually add a mark
      between positions 0 and 1 step 2pt
      with
      {
        \draw (0,-1pt) -- (0,1pt);
      }
    },postaction={decorate}},
  markit/.style={decoration={
    markings,% switch on markings
    mark=% actually add a mark
      between positions 0 and 1 step 15pt
      with
      {
        \draw (-1pt,-1pt) -- (1pt,1pt);
        \draw (1pt,-1pt) -- (-1pt,1pt);
      }
    },postaction={decorate}}
}
\title{Strongly Shortcut Spaces}
\author{Nima Hoda}
\thanks{This work was partially supported by the ERC grant GroIsRan
  and an NSERC Postdoctoral Fellowship.}
\address{DMA, École normale supérieure \\ Université
  PSL, CNRS \\ 75005 Paris, France \\ \ \\ Cornell University}
\email{nima.hoda@mail.mcgill.ca}
\date{\today}
\keywords{strongly shortcut space, %
  strongly shortcut graph, %
  strongly shortcut group, %
  asymptotically CAT(0) group} %
\subjclass[2020]{20F65, % Geometric group theory
  20F67, % Hyperbolic groups and nonpositively curved groups
  51F30} % Lipschitz and coarse geometry of metric spaces
\newcommand{\eqnlabel}[1]{\label{eqn:#1}}
\newcommand{\eqnref}[1]{\ref{eqn:#1}}
\newcommand{\peqnref}[1]{(\eqnref{#1})}
\DeclareMathOperator{\CAT}{CAT}
\DeclareMathOperator{\SL}{SL}
\newcommand{\pc}[1]{P^{\circ}_{#1}}
\begin{document}

\begin{abstract}
  We define the strong shortcut property for rough geodesic metric
  spaces, generalizing the notion of strongly shortcut graphs.  We
  show that the strong shortcut property is a rough similarity
  invariant.  We give several new characterizations of the strong
  shortcut property, including an asymptotic cone characterization.
  We use this characterization to prove that asymptotically $\CAT(0)$
  spaces are strongly shortcut.  We prove that if a group acts
  metrically properly and coboundedly on a strongly shortcut rough
  geodesic metric space then it has a strongly shortcut Cayley graph
  and so is a strongly shortcut group.  Thus we show that $\CAT(0)$
  groups are strongly shortcut.

  To prove these results, we use several intermediate results which we
  believe may be of independent interest, including what we call the
  Circle Tightening Lemma and the Fine Milnor-Schwarz Lemma.  The
  Circle Tightening Lemma describes how one may obtain a
  quasi-isometric embedding of a circle by performing surgery on a
  rough Lipschitz map from a circle that sends antipodal pairs of
  points far enough apart.  The Fine Milnor-Schwarz Lemma is a
  refinement of the Milnor-Schwarz Lemma that gives finer control on
  the multiplicative constant of the quasi-isometry from a group to a
  space it acts on.
\end{abstract}

\maketitle

\tableofcontents

\section{Introduction}

The study of interactions between nonpositive curvature and infinite
group theory have a long history dating back to the work of Max Dehn
on fundamental groups of surfaces in the early 20th century.  These
ideas have been developed in a variety of directions since that time
and have become particularly relevant in recent decades with the
emergence of geometric group theory.  Various theories of
nonpositively curved groups have been developed: small cancellation
groups, $\CAT(0)$ groups, cubulated groups, systolic groups, quadric
groups, etc.  However, while the case of \emph{negatively} curved
groups has been satisfactorily unified by Gromov's seminal work on
hyperbolic groups \cite{Gromov:1987}, to this date there is no
satisfactory general notion of a nonpositively curved group.

Strongly shortcut graphs were introduced in earlier work of this
author \cite{Hoda:shortcut_graphs:2022} as graphs satisfying a weak
notion of nonpositive curvature.  They were shown to unify a broad
family of graphs of interest in geometric group theory and metric
graph theory including hyperbolic graphs, standard Cayley graphs of
finitely generated Coxeter groups and $1$-skeletons of finite
dimensional $\CAT(0)$ cube complexes, systolic complexes and quadric
complexes \cite{Hoda:shortcut_graphs:2022,
  Haettel_Hoda_Petyt:Coarse_injectivity:2020}.  They are finitely
presented and have polynomial isoperimetric functions and so have
decidable word problem \cite{Hoda:shortcut_graphs:2022}.  Strongly
shortcut groups are defined as those groups admitting a proper and
cocompact action on a strongly shortcut graph
\cite{Hoda:shortcut_graphs:2022}.  They include a wide family of
groups satisfying various nonpositive curvature conditions, including
hyperbolic groups, Coxeter groups, cocompactly cubulated groups,
systolic groups, quadric groups, finitely presented small cancellation
groups, Helly groups, hierarchically hyperbolic groups and even the
discrete Heisenberg groups \cite{Hoda:shortcut_graphs:2022,
  Haettel_Hoda_Petyt:Coarse_injectivity:2020,
  hoda_przytycki:heisenberg:2020}.

A graph $\Gamma$ is \newterm{strongly shortcut} if, for some $K > 1$,
there is a bound on the lengths of cycles $\alpha \colon S \to \Gamma$
for which
$d_X\bigl(\alpha(p), \alpha(\bar p)\bigr) \ge \frac{1}{K} \cdot
\frac{|S|}{2}$ for every antipodal pair of points $p, \bar p \in S$.
By a theorem of this author, a graph $\Gamma$ is \newterm{strongly
  shortcut} if and only if, for some $K > 1$, there is a bound on the
lengths of the $K$-bilipschitz embedded cycles of $\Gamma$
\cite{Hoda:shortcut_graphs:2022}.  A result of Papasoglu implies that
strongly shortcut groups have simply connected asymptotic cones
\cite[page 793]{Papasoglu:AsymptoticCones:1996}.  By another result of
Papasoglu, this implies that strongly shortcut groups have linear
isodiametric functions \cite[page 805]{Papasoglu:AsymptoticCones:1996}
and, by a result of Riley, this implies that strongly shortcut groups
have linear filling length functions
\cite[Theorem~C]{Riley:Higher_Connectedness:2003}.

In this paper, we introduce a generalization of this notion to rough
geodesic metric spaces.  A metric space $X$ is \defterm{$R$-rough
  geodesic} if, for every $x_1, x_2 \in X$, there exists a function
$f \colon [0,\ell] \to X$ such that $f(0) = x_1$, $f(\ell) = x_2$,
$\ell = d(x_1,x_2)$ and
\[ |s-t| -R \le d\bigl(f(s),f(t)\bigr) \le |s-t| + R\] for any $s,t$
in the interval $[0,\ell]$.  This is the same as $X$ being
$(1,R)$-quasi-geodesic.  The special case $R = 0$ is that of geodesic
metric spaces.  An $R$-rough geodesic metric space $X$ is
\newterm{strongly shortcut} if, for some $K > 1$, there is a bound on
the lengths of Riemannian circles $S$ for which there exists an
$R$-rough $1$-Lipschitz map $\alpha \colon S \to X$ that satisfies
$d_X\bigl(\alpha(p), \alpha(\bar p)\bigr) \ge \frac{1}{K} \cdot
\frac{|S|}{2}$ for every antipodal pair of points $p, \bar p \in S$.
Such a map $\alpha$ is called a \defterm{$\frac{1}{K}$-almost
  isometric $R$-circle}.  We give several characterizations of the
strong shortcut property, we show that a group acting metrically
properly and coboundedly on a strongly shortcut rough geodesic metric
space is a strongly shortcut group and we prove a few other results
that may be of independent interest in metric geometry and geometric
group theory.  The results of this paper are applied in several
upcoming papers of the present author and his coauthors
\cite{Hoda_Krishna:Relatively_hyperbolic:2020,
  Haettel_Hoda_Petyt:Coarse_injectivity:2020,
  hoda_przytycki:heisenberg:2020}.

Below is a summary of our main results.

\Mainthmref{sshortcut_equiv} below gives several conditions that are
equivalent to the strong shortcut property for rough geodesic metric
spaces.  Of particular note is condition
\pitmref{mainthm_no_ascone_circle} which expresses the strong shortcut
property in terms of asymptotic cones.  Conditions
\pitmref{mainthm_no_qie_r} and \pitmref{mainthm_no_qie} generalize
Proposition 3.5 of \cite{Hoda:shortcut_graphs:2022}, which expresses
the strong shortcut property for graphs in terms of bilipschitz
cycles.  These two generalizations have their own advantages: for
geodesic metric spaces (i.e. when $R = 0$) condition
\pitmref{mainthm_no_qie_r} expresses the strong shortcut property
purely in terms of bilipschitz maps from circles whereas condition
\pitmref{mainthm_no_qie} avoids the dependence on $R$.  Condition
\pitmref{mainthm_ngons_bounded} expresses the strong shortcut property
in terms of nonapproximability of certain finite metric spaces at
large scale.  Conditions \pitmref{mainthm_ngons_bounded} and
\pitmref{mainthm_no_ascone_circle} also make sense for general metric
spaces and we prove that they are equivalent for general metric
spaces.  (See \Thmref{general_metric_neg_equiv}.)  Thus one may
consider condition \pitmref{mainthm_ngons_bounded} of
\Mainthmref{sshortcut_equiv} as a definition of the strong shortcut
property for general metric spaces.

\begin{mainthm}[\Thmref{not_sshortcut_equiv}]
  \mainthmlabel{sshortcut_equiv} Let $X$ be an $R$-rough geodesic
  metric space.  The following conditions are equivalent.
  \begin{enumerate}
  \item \itmlabel{mainthm_is_strong_shortcut} $X$ is strongly shortcut.
  \item \itmlabel{mainthm_no_qie_r} There exists an $L > 1$ such that
    there is a bound on the lengths of the $(L,4R)$-quasi-isometric
    embeddings of Riemannian circles in $X$.
  \item \itmlabel{mainthm_no_qie} There exists an $L > 1$ such that
    for every $C \ge 0$ there is a bound on the lengths of the
    $(L,C)$-quasi-isometric embeddings of Riemannian circles in $X$.
  \item \itmlabel{mainthm_ngons_bounded} For some $L > 1$ and some
    $n \in \N$, there is a bound on the $\lambda > 0$ for which there
    exists an $L$-bilipschitz embedding of $\lambda S_n^0$ in $X$,
    where $S_n^0$ is the vertex set of the cycle graph $S_n$ of length
    $n$ and $\lambda S_n^0$ is $S_n^0$ with the metric scaled by
    $\lambda$.
  \item \itmlabel{mainthm_no_ascone_circle} No asymptotic cone of $X$
    contains an isometric copy of the Riemannian circle of unit
    length.
  \end{enumerate}
\end{mainthm}

The main difficulty in proving \Mainthmref{sshortcut_equiv} is in the
implication
$\pitmref{mainthm_no_qie_r} \Longrightarrow
\pitmref{mainthm_is_strong_shortcut}$.  This is because a
$\frac{1}{K}$-almost isomertric $R$-circle in $X$ does not need to be
an $(L,4R)$-quasi-isometric embedding for any $L > 1$: while the
almost isometric condition only concerns pairs of antipodal points,
the quasi-isometry condition concerns all pairs of points.  The idea
of the proof is that given a $\frac{1}{K}$-almost isometric $R$-circle
$\alpha$ with $K > 1$ sufficiently close to $1$, we can perform
surgery on $\alpha$ in order to obtain an $(L,4R)$-quasi-isometric
embedding where $L$ depends on $K$ in such a way that if $K \to 1$
then $L \to 1$ also.  The contrapositive
$\neg \pitmref{mainthm_is_strong_shortcut} \Longrightarrow \neg
\pitmref{mainthm_no_qie_r}$ then readily follows since any family of
arbitrarily long $R$-circles with almost isometric constant $K$
approaching $1$ could then be surgered to produce a family of
quasi-isometric embeddings with the multiplicative constant $L$
tending to $1$.  The circle surgery result, which we call the Circle
Tightening Lemma, is stated in slightly simplified form in
\Mainthmref{circle_tightening} below and expressed more formally in
\Lemref{circle_tightening}.

\begin{mainthm}[\Thmref{graph_neg_equiv}, \Thmref{not_sshortcut_equiv}]
  Let $\Gamma$ be a graph.  Then $\Gamma$ is strongly shortcut as a
  graph if and only if $\Gamma$ is strongly shortcut as a geodesic
  metric space.
\end{mainthm}

\Mainthmref{ss_group} below gives several conditions that are
equivalent to the strong shortcut property for groups.  Condition
\pitmref{has_ss_cayley} reduces the property to the existence of a
strongly shortcut Cayley graph.  The proof is a direct application of
the Fine Milnor-Schwarz Lemma (\Mainthmref{fine_milnor_schwarz} below)
and stability of the strong shortcut property under scaling and
quasi-isometric perturbation of the metric
(\Mainthmref{mainthm_rough_approx_inv} below).

\begin{mainthm}[\Corref{ss_group}]
  \mainthmlabel{ss_group} Let $G$ be a group.  The following
  conditions are equivalent
  \begin{enumerate}
  \item $G$ is strongly shortcut.
  \item $G$ acts metrically properly and coboundedly on a strongly
    shortcut rough geodesic metric space.
  \item \itmlabel{has_ss_cayley} $G$ has a finite generating set $S$
    for which the Cayley graph of $(G,S)$ is strongly shortcut.
  \end{enumerate}
\end{mainthm}

Asymptotically $\CAT(0)$ spaces and groups were first introduced and
studied by Kar \cite{Kar:2011}.  A metric space $X$ is
\newterm{asymptotically $\CAT(0)$} if every asymptotic cone of $X$ is
$\CAT(0)$.  A group is \newterm{asymptotically $\CAT(0)$} if it acts
properly and cocompactly on an asymptotically $\CAT(0)$ proper
geodesic metric space.  Examples of asymptotically $\CAT(0)$ spaces
include $\CAT(0)$ spaces, Gromov-hyperbolic spaces and
$\widetilde{\SL(2,\R)}$ with the Sasaki metric \cite{Kar:2011}.

\begin{mainthm}[\Thmref{as_cat0_space}, \Thmref{as_cat0_group}]
  Asymptotically $\CAT(0)$ rough geodesic metric spaces are strongly
  shortcut.  Consequently, (asymptotically) $\CAT(0)$ groups are
  strongly shortcut.
\end{mainthm}

\Mainthmref{ascone_stability} below shows that the strong shortcut
condition is preserved under taking asymptotic cones.  This was
suggested as a desirable property for a general notion of nonpositive
curvature in Gromov
\cite[Section~6.E]{Gromov:Asymptotic_Invariants:1993}.

\begin{mainthm}[\Corref{as_cones_ss}]
  \mainthmlabel{ascone_stability} Let $X$ be an $R$-rough geodesic
  metric space.  If $X$ is strongly shortcut then every asymptotic
  cone of $X$ is strongly shortcut.
\end{mainthm}

\Mainthmref{mainthm_rough_approx_inv} below has several consequences.
In addition to showing that the strong shortcut property descends to
isometric subspaces and is a rough similarity invariant, it implies
that for a given strongly shortcut space, a sufficiently small
bilipschitz distortion of the metric preserves the strong shortcut
property.  This is another property which is discussed in Gromov
\cite[Section~6.E]{Gromov:Asymptotic_Invariants:1993}.

\begin{mainthm}[\Corref{rough_approx_inv}]
  \mainthmlabel{mainthm_rough_approx_inv} Let $X$ be a strongly
  shortcut rough geodesic metric space.  Then there exists an
  $L_X > 1$ such that whenever $Y$ is a rough geodesic metric space
  and $C > 0$ and $f \colon Y \to X$ is an $(L_X,C)$-quasi-isometric
  embedding up to scaling, then $Y$ is also strongly shortcut.  In
  particular, the strong shortcut property is a rough similarity
  invariant of rough geodesic metric spaces.
\end{mainthm}

In fact, \Mainthmref{mainthm_rough_approx_inv} holds for general
metric spaces with condition \pitmref{mainthm_ngons_bounded} of
\Mainthmref{sshortcut_equiv} in place of the strong shortcut property.
(See \Propref{not_approx_ngon_invariant}.)  It should be noted that
the strong shortcut property is not a quasi-isometry invariant so one
cannot hope to remove the dependence on $X$ of the quasi-isometry
constant $L_X$.  See \Ssecref{not_qi_inv} for an example.

The following result, which we call the Circle Tightening Lemma,
states that a map from a circle that satisfies a rough lipchitz upper
bound and that, on antipodes, satisfies a bilipschitz lower bound can
be upgraded through surgery to a rough bilipschitz map.  See
\Figref{circle_tightening_intro}.  The Circle Tightening Lemma is
essential in the proof of \Mainthmref{sshortcut_equiv}.  We believe it
may be of independent interest.  Here we express a slightly simplified
version of the Circle Tightening Lemma.  For the formal statement,
please see \Lemref{circle_tightening}.

\begin{figure}[ht]
  \newcommand{\circrad}{2cm}

  \newcommand{\arc}[2]{
    \draw (#1:\circrad) arc[start angle=#1, end angle=#2, radius=\circrad]}
  \newcommand{\spike}[4]{
    \draw[red] (#2:\circrad) arc[start angle=#2, end angle=#3, radius=\circrad];
    \draw (#2:\circrad)
    to[out=#2/2+#3/2+90,in=#2/2+#3/2] (#2/2+#3/2:#1*\circrad);
    \draw (#3:\circrad)
    to[out=#2/2+#3/2-90,in=#2/2+#3/2] (#2/2+#3/2:#1*\circrad);
    \node at (#2/2+#3/2:1em+#1*\circrad) {#4}}

  \centering
  \begin{tikzpicture}
    \spike{1.3}{-20}{-5}{$Q_4$};
    \spike{1.12}{30}{35}{$Q_3$};
    \spike{1.1}{75}{80}{$Q_6$};
    \spike{1.2}{173}{183}{$Q_5$};
    \spike{1.05}{230}{232}{$Q_7$};
    \spike{1.1}{267}{273}{$Q_2$};
    \spike{1.15}{302}{309}{$Q_1$};
    
    \arc{-5}{30};
    \arc{35}{75};
    \arc{80}{173};
    \arc{183}{230};
    \arc{232}{267};
    \arc{273}{302};
    \arc{309}{340};
  \end{tikzpicture}
  
  \caption{The closed outer path in black is a $1$-Lipchitz embedding
    $\alpha$ of a Riemannian circle $S$.  This embedding $\alpha$ has
    poor bilipchitz constant but only because it badly distorts
    distances between relatively nearby pairs of points of $S$ (pairs
    contained in the subpaths $\alpha|_{Q_i}$).  If we consider only
    antipodal pairs of points of $S$ then the distortion of their
    distances under $\alpha$ is much less than in these worst cases.
    In other words $\alpha$ has low distortion when viewed
    \emph{globally}.  The Circle Tightening Lemma tells us that if the
    global distortion of $\alpha$ is low enough then we can perform
    surgery on $\alpha$, replacing distorted subpaths of arbitrarily
    low total relative length with efficient alternatives (the red
    paths in the figure) in order to obtain an arbitrarily good
    bilipschitz constant.}
  \figlabel{circle_tightening_intro}
\end{figure}

\begin{mainthm}[Circle Tightening Lemma, \Lemref{circle_tightening}]
  \mainthmlabel{circle_tightening}

  Let $X$ be an $R$-rough geodesic metric space with $R \ge 0$, let
  $L > 1$ and let $\epsilon > 0$.  There exists a $K > 1$ such that if
  $\alpha \colon S \to X$ is a sufficiently long $R$-rough
  $1$-Lipschitz map from a Riemannian circle $S$ satisfying
  \[ d_X\bigl(\alpha(p), \alpha(\bar p)\bigr) \ge \frac{1}{K} d_S(p,
    \bar p) \] for every antipodal pair $p, \bar p \in S$ then there
  exists a countable collection $\{Q_i\}_i$ of pairwise disjoint
  closed segments in $S$ of total length $\sum_i|Q_i| < \epsilon |S|$
  such that shortening the $Q_i$ and replacing the
  $\alpha|_{Q_i} \colon Q_i \to X$ we can obtain from $\alpha$ an
  $(L,4R)$-quasi-isometric embedding of a circle.
\end{mainthm}

Note that in the statement of the Circle Tightening Lemma, the rough
geodesicity constant $R$ may be equal to $0$ in which case the result
is about $1$-Lipschitz maps and $L$-bilipschitz maps in a geodesic
metric space.

We call the following refinement of the Milnor-Schwarz Lemma the Fine
Milnor-Schwarz Lemma.  It is used in the proof of
\Mainthmref{ss_group}.  It essentially says that if a group $G$ acts
metrically properly and coboundedly on a rough geodesic space $X$
then, up to scaling, the group $G$ has word metrics that are
quasi-isometric to $X$ with multiplicative constant arbitrarily close
to $1$.  We believe it may be of independent interest.

\begin{mainthm}[Fine Milnor-Schwarz Lemma, \Lemref{fine_ms}, \Rmkref{metric_proper_finite}]
  \mainthmlabel{fine_milnor_schwarz}
  
  Let $(X,d)$ be a rough geodesic metric space.  Let $G$ be a group
  acting metrically properly and coboundedly on $X$ by isometries.
  Fix $x_0 \in X$.  For $t > 0$ let $S_t$ be the finite set defined by
  \[ S_t = \bigl\{ g \in G \sth d(x_0,gx_0) \le t \bigr\} \] and
  consider the word metric $d_{S_t}$ defined by $S_t$.  (For those $t$
  where $S_t$ does not generate $G$, we allow $d_{S_t}$ to take the
  value $\infty$).  Let $K_t$ be the infimum of all $K > 1$ for which
  \begin{align*}
    (G,td_{S_t}) &\to X \\
    g &\mapsto g \cdot x_0
  \end{align*}
  is a $(K, C_K)$-quasi-isometry for some $C_K \ge 0 $.  Then
  $K_t \to 1$ as $t \to \infty$.
\end{mainthm}

\subsection{Structure of the paper}

In \Secref{definitions} we introduce basic notions that will be used
throughout the paper.  In \Secref{characterizations} prove various
characterizations of the strong shortcut property and prove that it is
a rough similarity invariant.  In \Secref{circle_tightening} we state
and prove the Circle Tightening Lemma.  In \Secref{fine_ms} we state
and prove the Fine Milnor-Schwarz Lemma.  In \Secref{as_cat0}, we
apply the results of the previous sections to prove that
asymptotically $\CAT(0)$ groups are strongly shortcut.

\subsection{Acknowledgements}

The author would like to thank Pierre Pansu for some valuable
discussions about asymptotic cones of strongly shortcut graphs.

\section{Basic notions and definitions}
\seclabel{definitions}

Let $X$ and $Y$ be metric spaces, let $S$ be a set and let $R$ be a
nonnegative real.  A function $f \colon S \to Y$ is $R$-roughly onto
if every $y \in Y$ is at distance at most $R$ from some point in
$f(X) \subset Y$.  An \defterm{$R$-rough isometric embedding} from $X$
to $Y$ is a function $f \colon X \to Y$ such that
\[ d(x_1,x_2) - R \le d\bigl(f(x_1),f(x_2)\bigr) \le d(x_1,x_2) + R \]
for all $x_1,x_2 \in X$.  An $R$-rough isometric embedding is the same
as a $(1,R)$-quasi-isometric embedding.  An $R$-rough isometric
embedding $f \colon X \to Y$ is an \defterm{$R$-rough isometry} if it
is roughly onto.  An $R$-rough isometry is the same as a
$(1,R)$-quasi-isometry.

An \defterm{$R$-rough geodesic} in $X$ from $x_1$ to $x_2$ is an
$R$-rough isometric embedding $f$ from the interval
$\bigl[0,\ell\bigr] \subset \R$ to $X$ with $\ell = d(x_1,x_2)$ such
that $f(0) = x_1$ and $f(\ell) = x_2$.  An $R$-rough geodesic is the
same as a $(1,R)$-quasi-geodesic.  A metric space $(X,d)$ is
\defterm{$R$-rough geodesic} if every pair of points in $X$ is joined
by an $R$-rough geodesic.\footnote{We include the condition
  $\ell = d(x_1,x_2)$ in the definition of an $R$-rough geodesic $f$
  only for convenience.  If we do not assume it, then we can recover
  it up to slightly increasing the rough geodesicity constant to
  $R' = (1+\sqrt{2})R$.  Indeed, the remaining conditions on $f$ imply
  that $\bigl|\ell - d(x_1,x_2)\bigr| \le R$ and it can be shown that
  either $d(x_1, x_2) \le R'$ (in which case any function
  $\bigl[0,d(x_1,x_2)\bigr] \to \{x_1,x_2\}$ is an $R'$-rough
  isometric embedding) or the composition of $f$ with the orientation
  preserving linear bijection $\bigl[0,d(x_1,x_2)\bigr] \to [0,\ell]$
  results in an $R'$-rough isometric embedding.}  Note that rough
geodesicity implies weak geodesicity, as used by Kasparov and
Skandalis and others
\cite{Kasparov_Skandalis:Groupes_boliques:1994,Kasparov_Skandalis:Groups_acting_bolic:2003,Lafforgue:K_theorie_bivariante:2002,Mineyev_Yu:Baum-Connes:2002}.
A natural question is whether or not every rough geodesic space can be
thickened, in the sense of Gromov
\cite[Section~1.B]{Gromov:Asymptotic_Invariants:1993} to a geodesic
metric space.

Let $X$ and $Y$ be metric spaces, let $R \ge 0$ and let $K \ge 1$.  An
\defterm{$R$-rough $K$-Lipschitz map} from $Y$ to $X$ is a function
$\alpha \colon Y \to X$ such that
\[ d\bigl(\alpha(p), \alpha(q)\bigr) \le K d(p, q) + R \] for all
$p,q \in Y$.  An \defterm{$R$-path} in $X$ is an $R$-rough
$1$-Lipschitz map $\alpha \colon P \to X$ from an interval
$P \subset \R$.  An \defterm{$R$-circle} in $X$ is an $R$-rough
$1$-Lipschitz map $\alpha \colon S \to X$ from a Riemannian circle
$S$.  We use the notation $|F|$ to denote the length of $F$, where $F$
is an interval, a Riemannian circle or a finite union of closed
segments in an interval or in a Riemannian circle.

\begin{rmk}
  \rmklabel{rpath_concatenation} The concatenation of two $R$-paths
  need not be an $R$-path.  However, if $\alpha_1 \colon P_1 \to X$
  and $\alpha_2 \colon P_2 \to X$ are a pair of concatenatable
  $R$-paths and $\gamma \colon [0,R] \to X$ is the constant path at
  the point of concatenation then the concatenation
  $\alpha_1\gamma\alpha_2$ is an $R$-path.
\end{rmk}

An $R$-circle $\alpha \colon S \to X$ is \defterm{$\frac{1}{K}$-almost
  isometric}, for some $K > 1$, if
\[ d\bigl(\alpha(p), \alpha(\bar p)\bigr) \ge \frac{1}{K} \cdot
  \frac{|S|}{2} \] for every antipodal pair of points
$p, \bar p \in S$.

\begin{defn}
  \defnlabel{sshortcut} An $R$-rough geodesic metric space $X$ is
  \defterm{strongly shortcut} if, for some $K > 1$, there is a bound
  on the lengths of the $\frac{1}{K}$-almost isometric $R$-cycles of
  $X$.
\end{defn}

\begin{rmk}
  By \Thmref{not_sshortcut_equiv}, the apparent dependence on $R$ in
  \Defnref{sshortcut} is not essential.  That is, if $X$ is an
  $R$-rough geodesic metric space and $R' > R$ then $X$ is strongly
  shortcut if and only if it is strongly shortcut when viewed as an
  $R'$-rough geodesic metric space.
\end{rmk}

We view graphs as geodesic metric spaces with each edge isometric to a
unit interval.  For a graph $\Gamma$, we use the notation $\Gamma^0$
to denote the vertex set of $\Gamma$ with its subspace metric.  The
\defterm{cycle graph} $S_n$ of length $n$ is the graph isometric to a
Riemannian circle of length $n$.  A \defterm{cycle} in a graph
$\Gamma$ is a combinatorial map $S_n \to \Gamma$ from some cycle graph
$S_n$ to $\Gamma$.  A \defterm{path graph} is a graph isometric to a
real interval.  A \defterm{combinatorial path} in a graph $\Gamma$ is
a combinatorial map $P \to \Gamma$ from a path graph $P$.

Note that if $\alpha \colon S_n \to \Gamma$ is a cycle in a graph
$\Gamma$ then $\alpha$ is a $1$-Lipschitz map from a Riemannian circle
to a geodesic metric space or, in the language we have established
above, an $R$-circle in an $R$-rough geodesic metric space where
$R = 0$.

\begin{defn}
  A graph $\Gamma$ is \defterm{strongly shortcut as a graph} if, for
  some $K > 1$, there is a bound on the lengths of the
  $\frac{1}{K}$-almost isometric cycles of $\Gamma$.
\end{defn}

\begin{rmk}
  \rmklabel{ss_asgraph_asmetric} By \Thmref{graph_neg_equiv},
  \Thmref{not_sshortcut_equiv} and \Corref{rough_approx_inv}, the
  following conditions are equivalent for a graph $\Gamma$.
  \begin{enumerate}
  \item $\Gamma$ is strongly shortcut as a graph.
  \item $\Gamma$ is strongly shortcut as a geodesic metric space.
  \item $\Gamma^0$ is strongly shortcut as a rough geodesic metric
    space.
  \end{enumerate}
\end{rmk}

If $X$ is a metric space and $\lambda > 0$ then we write $\lambda X$
to denote the metric space obtained from $X$ by scaling the metric by
$\lambda$.

\section{Characterizing the strong shortcut property}
\seclabel{characterizations}

In this section we will give various characterizations of the strong
shortcut property.

\begin{lem}
  \lemlabel{global_to_local} Let $\alpha \colon S \to X$ be a
  $\frac{1}{K}$-almost isometric $R$-circle in a metric space $X$.
  Then
  \[ d\bigl(\alpha(p),\alpha(q)\bigr) \ge d(p,q) - \frac{K-1}{K}
    \cdot \frac{|S|}{2} - 2R \] for all $p,q \in S$.
\end{lem}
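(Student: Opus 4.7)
The plan is to lower-bound $d(\alpha(p),\alpha(q))$ by playing off $q$ against its antipode $\bar q \in S$. The key metric identity on a Riemannian circle is that for any point $p$ and any antipodal pair $q,\bar q$, the distances satisfy $d_S(p,q) + d_S(p,\bar q) = |S|/2$. This is immediate from parametrizing $S$ by arclength: if $q,\bar q$ divide $S$ into two arcs, then $p$ lies on one of them and the identity follows from a short case check.

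Next, I would apply the reverse triangle inequality in $X$ to the three points $\alpha(p), \alpha(q), \alpha(\bar q)$:
\[ d\bigl(\alpha(p),\alpha(q)\bigr) \ge d\bigl(\alpha(q),\alpha(\bar q)\bigr) - d\bigl(\alpha(p),\alpha(\bar q)\bigr). \]
The $\frac{1}{K}$-almost isometric hypothesis applied to the antipodal pair $q,\bar q$ gives $d(\alpha(q),\alpha(\bar q)) \ge \frac{1}{K}\cdot\frac{|S|}{2}$, and the $R$-rough $1$-Lipschitz property applied to $p,\bar q$ gives $d(\alpha(p),\alpha(\bar q)) \le d_S(p,\bar q) + R$.

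Substituting these two estimates together with the identity $d_S(p,\bar q) = |S|/2 - d_S(p,q)$ produces
\[ d\bigl(\alpha(p),\alpha(q)\bigr) \ge d_S(p,q) - \frac{K-1}{K}\cdot\frac{|S|}{2} - R, \]
which is in fact slightly sharper than the claimed bound involving $2R$; the claim then follows a fortiori.

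I do not expect any real obstacle. The only thing worth checking carefully is the circle identity $d_S(p,q) + d_S(p,\bar q) = |S|/2$, which is where the geometry of the Riemannian circle (rather than a general path-metric space) enters; everything else is triangle-inequality bookkeeping.
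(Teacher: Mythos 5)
Your proof is correct and, in fact, yields the sharper bound with a single $R$ rather than $2R$. The paper's proof picks an antipodal pair $p',q'$ with neither equal to $p$ or $q$, so that a geodesic arc visits $p',p,q,q'$ in order, and applies the triangle inequality along this three-segment path; the Lipschitz hypothesis is used twice (on $[p',p]$ and $[q,q']$), which is where the $2R$ comes from. You instead take $\bar q$ to be the antipode of $q$ itself and estimate $d(\alpha(p),\alpha(q)) \ge d(\alpha(q),\alpha(\bar q)) - d(\alpha(p),\alpha(\bar q))$, invoking the rough Lipschitz bound only once, on the pair $(p,\bar q)$, together with the circle identity $d_S(p,q) + d_S(p,\bar q) = |S|/2$. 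Both arguments hinge on choosing an antipodal pair and applying the almost-isometric hypothesis plus the triangle inequality, so the core idea is the same, but your two-term version is leaner bookkeeping and saves one $R$. The circle identity you flag is correct (a short arclength case check confirms it), and your argument also handles the degenerate case $p=q$ or $p,q$ antipodal uniformly, whereas the paper's choice of $p',q' \in S \setminus \{p,q\}$ technically breaks down when $p,q$ are themselves antipodal (though that case is trivial).
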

\begin{proof}
  Let $p',q' \in S \setminus \{p,q\}$ be antipodal and suppose that a
  geodesic segment of $S$ visits $p'$, $p$, $q$, and $q'$, in that
  order.  Then
  \begin{align*}
    \frac{1}{K} \cdot \frac{|S|}{2}
    &\le d\bigl(\alpha(p'),\alpha(q')\bigr) \\
    &\le d\bigl(\alpha(p'),\alpha(p)\bigr)
      + d\bigl(\alpha(p),\alpha(q)\bigr)
      + d\bigl(\alpha(q),\alpha(q')\bigr) \\
    &\le d(p',p) + R + d\bigl(\alpha(p),\alpha(q)\bigr) + d(q,q') + R \\
    &= d(p',p) + d(q,q') + d\bigl(\alpha(p),\alpha(q)\bigr)  + 2R \\
    &= \frac{|S|}{2} - d(p,q) + d\bigl(\alpha(p),\alpha(q)\bigr)  + 2R
  \end{align*}
  from which we can obtain the desired inequality.
\end{proof}

The following definition is very useful because it applies to general
metric spaces.

\begin{defn}
  A metric space $X$ \defterm{approximates $n$-gons} if, for every
  $K > 1$, and every $n \in \N$ there exist $K$-bilipschitz embeddings
  of $\lambda S_n^0$ in $X$ for arbitrarily large $\lambda > 0$.
\end{defn}

We will see that in the case of a graph or a rough geodesic metric
space \emph{non}approximation of $n$-gons is equivalent to the strong
shortcut property.  Thus it would make sense to define the strong
shortcut property for general metric spaces as nonapproximability of
$n$-gons.

\begin{defn}
  Let $X$ and $Y$ be metric spaces.  A function $f \colon Y \to X$ is
  a $(K,C)$-quasi-isometry \defterm{up to scaling} if there exists a
  $\lambda > 0$ such that $f$ is a $(K,C)$-quasi-isometry when viewed
  as a function from $\lambda Y$ to $X$.  A function
  $f \colon Y \to X$ is a \defterm{rough similarity} if, for some
  $C > 0$, the function $f$ is a $(1,C)$-quasi-isometry up to scaling.
  A property $\mathscr{P}$ of metric spaces is a \defterm{rough
    similarity invariant} if whenever $X$ satisfies $\mathscr{P}$ and
  $f \colon Y \to X$ is a rough similarity then $Y$ also satisfies
  $\mathscr{P}$.  A property $\mathscr{P}$ of metric spaces is a
  \defterm{rough approximability invariant} if, for any metric space
  $X$ satisfying $\mathscr{P}$, there exists an $L_X > 1$ such that
  whenever $C > 0$ and $f \colon Y \to X$ is an
  $(L_X,C)$-quasi-isometric embedding up to scaling, then $Y$ also
  satisfies $\mathscr{P}$.
\end{defn}

\begin{prop}
  \proplabel{not_approx_ngon_invariant} Nonapproximability of $n$-gons
  is a rough approximability invariant of metric spaces.  In
  particular, nonapproximability of $n$-gons is a rough similarity
  invariant of metric spaces.
\end{prop}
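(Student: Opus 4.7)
The plan is to argue directly from the definitions. Suppose $X$ does not approximate $n$-gons, witnessed by constants $K_X > 1$, $n_X \in \N$, and $\Lambda_X > 0$ such that no $K_X$-bilipschitz embedding of $\lambda S_{n_X}^0$ into $X$ exists for $\lambda > \Lambda_X$. We take $L_X \in (1, \sqrt{K_X})$, so that $L_X^2 < K_X$, and claim this is the desired invariance constant: for any $C > 0$ and any $(L_X, C)$-quasi-isometry up to scaling $f \colon Y \to X$, the set of $\lambda > 0$ admitting an $L_X$-bilipschitz embedding of $\lambda S_{n_X}^0$ into $Y$ is bounded, which witnesses nonapproximability of $n$-gons for $Y$ (with constants $L_X$ and $n_X$).

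The workhorse is the push-forward of a bilipschitz embedding through $f$. Fix $\mu > 0$ with $f \colon \mu Y \to X$ an honest $(L_X, C)$-quasi-isometry, and let $g \colon \lambda S_{n_X}^0 \to Y$ be an $L_X$-bilipschitz embedding. Composing the two pairs of bounds gives, for all $p, q \in S_{n_X}^0$,
\[ \tfrac{1}{L_X^2}\mu\lambda\, d_{S_{n_X}}(p,q) - C \;\le\; d_X\bigl(f(g(p)),f(g(q))\bigr) \;\le\; L_X^2\mu\lambda\, d_{S_{n_X}}(p,q) + C. \]
Since distinct vertices of $S_{n_X}$ are at distance at least $1$ apart and $L_X^2 < K_X$, the additive error $C$ fits inside the multiplicative slack $K_X / L_X^2 > 1$ as soon as $\mu\lambda$ exceeds some threshold $M = M(C,K_X,L_X)$. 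For such $\lambda$, the composition $f \circ g$ is a $K_X$-bilipschitz embedding of $\mu\lambda S_{n_X}^0$ into $X$.

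By the choice of $K_X$, $n_X$, $\Lambda_X$, any such embedding must satisfy $\mu\lambda \le \Lambda_X$. Combined with the threshold $M$, every $L_X$-bilipschitz embedding of $\lambda S_{n_X}^0$ into $Y$ therefore satisfies $\lambda \le \max(\Lambda_X, M)/\mu$, proving the rough approximability invariance. The ``in particular'' clause follows for free: a rough similarity is a $(1, C)$-quasi-isometry up to scaling, hence also an $(L_X, C)$-quasi-isometry up to scaling for every $L_X > 1$. The only real point of care is picking $L_X$ so that $L_X^2 < K_X$, providing exactly the multiplicative room needed to swallow the additive error $C$; beyond that the argument is straightforward constant-chasing, with no substantive obstacle.
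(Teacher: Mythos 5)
Your proof is correct and follows essentially the same route as the paper: push an $L_X$-bilipschitz embedding of a scaled $n$-gon forward through the quasi-isometry, use the fact that distinct vertices are far apart at large scale to absorb the additive constant $C$ into the multiplicative constant, and contradict the bound $\Lambda_X$ in $X$. The only cosmetic difference is your choice $L_X^2 < K_X$ (using the same constant for the quasi-isometry and the embeddings in $Y$) where the paper separately takes $L \in (1,K)$ and $K' \in (1, K/L)$ so that $LK' < K$; both are the same constant-chasing argument.
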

\begin{proof}
  Let $X$ be a metric space that does not approximate $n$-gons.  Then
  there is a $K > 1$ and an $n \in \N$ and a $\Lambda > 0$ such that
  any $K$-bilipschitz embedding of $\lambda S_n^0$ in $X$ satisfies
  $\lambda < \Lambda$.
  
  Let $Y$ be a metric space, let $t > 0$, let $L \in (1,K)$ and let
  $f \colon tY \to X$ be an $(L,C)$-quasi-isometric embedding.  Let
  $K' \in \bigl(1, \frac{K}{L}\bigr)$ and let
  $\lambda' > \frac{CLK'}{t}$.  We will show that there is a bound on
  the $\lambda'$ for which there exists a $K'$-bilipschitz embedding
  $\alpha \colon \lambda' S_n^0 \to Y$.  Viewing such an $\alpha$ as
  map from $t\lambda' S_n^0$ to $t Y$ the composition
  $f \comp \alpha \colon t\lambda' S_n^0 \to X$ is an
  $(LK',C)$-quasi-isometric embedding.  But the minimum distance
  between distinct points in $t\lambda' S_n^0$ is $t\lambda'$ and so
  one can show that $f \comp \alpha$ is a
  $\frac{t\lambda' LK' + C}{t\lambda' - LK'C}$-bilipschitz embedding
  from $t\lambda' S_n^0$.  But
  $\frac{t\lambda' LK' + C}{t\lambda' - LK'C} \to LK' < K$ as
  $\lambda' \to \infty$ so there is a $\Lambda_0$ such that if
  $\lambda' \ge \Lambda_0$ then
  $\frac{t\lambda' LK' + C}{t\lambda' - LK'C} < K$.  So if we had
  $\lambda' \ge \Lambda' = \max\bigl\{\Lambda_0,
  \frac{\Lambda}{t}\bigr\}$ then $f \comp\alpha$ would be a
  $K$-bilipschitz embedding of $\lambda S_n^0$ in $X$ with
  $\lambda = t\lambda' \ge \Lambda$, which would be a contradiction.
  Thus $\Lambda'$ bounds the $\lambda'$ for which there exists a
  $K'$-bilipschitz embedding $\alpha \colon \lambda' S_n^0 \to Y$, as
  required.
\end{proof}

\begin{thm}
  \thmlabel{graph_neg_equiv} Let $\Gamma$ be a graph.  Then the
  following conditions are equivalent.
  \begin{enumerate}
  \item \itmlabel{not_sshortcut_as_graph} $\Gamma$ is not strongly
    shortcut as a graph.
  \item \itmlabel{graph_ngons} $\Gamma$ approximates $n$-gons.
  \end{enumerate}
\end{thm}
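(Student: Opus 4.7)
The plan is to handle the two implications separately, both with $\Gamma$ viewed as a $0$-rough geodesic metric space so that \Lemref{global_to_local} applies with $R = 0$.

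For \pitmref{not_sshortcut_as_graph}~$\Rightarrow$~\pitmref{graph_ngons}, I fix $K > 1$ and $n \in \N$ and first choose $K_0 \in (1,K)$ small enough that $(K_0 - 1)/K_0 \le 2(K-1)/(nK)$. By \pitmref{not_sshortcut_as_graph}, there exist $\tfrac{1}{K_0}$-almost isometric cycles $\alpha \colon S_m \to \Gamma$ of arbitrarily large length $m$. I sample $n$ evenly spaced points $p_0, \dots, p_{n-1} \in S_m$; these inherit the metric of $\tfrac{m}{n} S_n^0$. The restriction $\alpha|_{\{p_i\}}$ is automatically $1$-Lipschitz, and \Lemref{global_to_local} supplies the matching lower bound $d_\Gamma(\alpha(p_i), \alpha(p_j)) \ge \tfrac{1}{K} d_{S_m}(p_i, p_j)$ from the choice of $K_0$. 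Since $m$, and hence $\lambda = m/n$, is arbitrarily large, this proves \pitmref{graph_ngons}.

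For \pitmref{graph_ngons}~$\Rightarrow$~\pitmref{not_sshortcut_as_graph}, fix $K > 1$ and choose $K_0 \in (1, K^{1/4})$, then $n$ large depending on $K$ and $K_0$. By \pitmref{graph_ngons} there is a $K_0$-bilipschitz embedding $\alpha \colon \lambda S_n^0 \to \Gamma$ with $\lambda$ arbitrarily large. Rounding each $\alpha(v_i)$ to a nearest vertex of $\Gamma$ perturbs distances by at most $1$, so for large $\lambda$ I may assume $\alpha$ lands in $\Gamma^0$ and is $K_0'$-bilipschitz with $K_0' < K^{1/4}$. Joining successive images by combinatorial geodesics yields a combinatorial cycle $\beta \colon S_m \to \Gamma$ of length $m \in [n\lambda/K_0', nK_0'\lambda]$. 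For antipodal $p, \bar p \in S_m$, I locate the nearest $x_i = \alpha(v_i), x_j = \alpha(v_j)$ along $\beta$ (each within cycle-distance $K_0'\lambda$); counting the filler geodesics along an arc from $x_i$ to $x_j$ gives $d_{\lambda S_n^0}(v_i, v_j) \ge (n/(2(K_0')^2) - 1)\lambda$. Applying the bilipschitz property of $\alpha$ together with the triangle inequality then yields $d_\Gamma(\beta(p), \beta(\bar p)) \ge \lambda(n/(2(K_0')^3) - 1/K_0' - 2K_0')$. For $K_0' < K^{1/4}$ and $n$ sufficiently large this exceeds $m/(2K)$, so $\beta$ is a $\tfrac{1}{K}$-almost isometric cycle of arbitrarily large length.

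The main obstacle will be the bookkeeping in the second implication: carefully ordering the choices of $K_0, K_0', n, \lambda$, and estimating $d_{\lambda S_n^0}(v_i, v_j)$ from arc-length data when each filler geodesic has length varying in $[\lambda/K_0', K_0'\lambda]$. The first implication, by contrast, is essentially a direct application of \Lemref{global_to_local} once $K_0$ is chosen appropriately in terms of $K$ and $n$.
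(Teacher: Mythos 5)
Your argument is correct and follows essentially the same route as the paper: one direction subdivides a long almost-isometric cycle and applies \Lemref{global_to_local} with $R=0$, while the other rounds the $n$-gon to vertices, fills in combinatorial paths, and bounds antipodal distances by the bilipschitz lower bound minus triangle-inequality errors. The only (harmless) difference is in the filling step: the paper pads the connecting paths to near-uniform length $\lfloor L\lambda\rfloor$ so that antipodes correspond to an index shift by $\bigl\lfloor n/2 \bigr\rfloor$, whereas you use geodesic fillers of varying length and count sample vertices along both half-arcs, which costs extra factors of the bilipschitz constant (hence your $K_0' < K^{1/4}$) but still produces arbitrarily long $\frac{1}{K}$-almost isometric cycles for every $K>1$, which is exactly the negation of the strong shortcut property.
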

\begin{proof}
  $\pitmref{not_sshortcut_as_graph} \Rightarrow \pitmref{graph_ngons}$
  Let $K' > 1$ and let $\alpha \colon S_{n'} \to \Gamma$ be a
  $\frac{1}{K'}$-almost isometric cycle.  Let $n \in \N$ and subdivide
  $S_{n'}$ into $n$ segments of equal length, ignoring the original
  graph structure on $S_{n'}$.  Let $Y \subset S_{n'}$ be the set of
  endpoints of the segments.  Then $Y$ is isometric to $\lambda S_{n}$
  for $\lambda = \frac{n'}{n}$.  Let $\alpha'$ be the composition of
  the inclusion $Y \hookrightarrow S_{n'}$ with $\alpha$.  Let
  $p, q \in Y$ be distinct.  Then $d(p,q) \ge \frac{|S_{n'}|}{n}$ and,
  by \Lemref{global_to_local},
  \begin{align*}
    d\bigl(\alpha'(p),\alpha'(q)\bigr)
    &\ge d(p,q) - \frac{K'-1}{K'} \cdot \frac{|S_{n'}|}{2} \\
    &\ge d(p,q) - \frac{K'-1}{K'} \cdot \frac{n d(p,q)}{2} \\
    &= \Bigl(1 - \frac{n(K'-1)}{2K'} \Bigr)d(p,q)
  \end{align*}
  but $d\bigl(\alpha'(p),\alpha'(q)\bigr) \le d(p,q)$ so, when $K'$ is
  small enough that $\frac{n(K'-1)}{2K'} < 1$, the map $\alpha'$ is
  $K$-bilipschitz for $K = \Bigl(1 - \frac{n(K'-1)}{2K'} \Bigr)^{-1}$.
  Thus, given an arbitrary $n \in \N$ and an $\alpha'$ as above with
  $K'$ small enough, we can obtain a $K$-bilipschitz embedding of
  $\lambda S_n$ in $\Gamma$ with
  $K = \Bigl(1 - \frac{n(K'-1)}{2K'} \Bigr)^{-1}$ and
  $\lambda = \frac{n'}{n}$.  Since $\Gamma$ is not strongly shortcut,
  there exist $\alpha$ as above with $K' > 1$ arbitrarily close to $1$
  and with $n'$ arbitrarily large.  But $K \to 1$ as $K' \to 1$ and
  $\lambda \to \infty$ as $n' \to \infty$ so we have $K$-bilipschitz
  embeddings of $\lambda S_n$ with $K$ arbitrarily close to $1$ and
  $\lambda$ arbitrarily large.

  $\pitmref{graph_ngons} \Rightarrow \pitmref{not_sshortcut_as_graph}$
  Let $n \in \N$, let $K > 1$, let $\lambda > K$ and let
  $\alpha \colon \lambda S_n^0 \to \Gamma$ be a $K$-bilipschitz embedding.
  There is a retraction $r \colon \Gamma \to \Gamma^0$ such that $r$
  is a $(1,1)$-quasi-isometry.  Then the composition $r \comp \alpha$
  is a $(K,1)$-quasi-isometric embedding.  But distinct points in
  $\lambda S_n^0$ are at distance at least $\lambda$ and, since
  $K < \lambda$, this implies that $r \comp \alpha$ is
  $L$-bilipschitz, where $L = \frac{K\lambda + 1}{\lambda - K}$.  View
  $S_n$ as the Cayley graph of $\Z/n\Z$ with generating set $\{1\}$
  and, for $i \in \Z/n\Z$, let $v_i$ be the vertex of $S_n$
  corresponding to $i$.  Then, for each $i$, we have
  $d\bigl(r \comp \alpha(v_i), r \comp \alpha(v_{i+1})\bigr) \le
  \lfloor L\lambda \rfloor$ so there is a combinatorial path
  $\gamma_i \colon P_i \to \Gamma$ of length
  $m_i \in \bigl\{ \lfloor L\lambda \rfloor-1, \lfloor L\lambda
  \rfloor \bigr\}$ from $r \comp \alpha(v_i)$ to
  $r \comp \alpha(v_{i+1})$.  For each $i$, identify the endpoint of
  $P_i$ with the initial point of $P_{i+1}$ to obtain a cycle
  $\gamma \colon S_m \to \Gamma$ with $m = \sum_{i=1}^n m_i$.  Then
  $\alpha$ factors through $\gamma$ via the embedding that sends $v_i$
  to the initial point of $P_i \subset S_m$.  So, viewing $S_n^0$ as a
  subset of $S_m^0$ via this embedding, we have
  $r \comp \alpha(v_i) = \gamma(v_i)$, for each $i$.  Let $x \in S_m$
  and let $v_i$ minimize $d(x,v_i)$.  Then
  $d(x,v_i) \le \frac{\lfloor L\lambda \rfloor}{2}$ and if $\bar x$ is
  the antipode of $x$ and
  $\bar i = i + \bigl\lfloor \frac{n}{2} \bigr\rfloor$ then
  $d(\bar x, v_{\bar i}) \le \lfloor L\lambda \rfloor + \frac{n}{2}$
  so we have the following computation.
  \begin{align*}
    d\bigl(\gamma(x), \gamma(\bar x)\bigr)
    &\ge d\bigl(\gamma(v_i), \gamma(v_{\bar i})\bigr)
      - d\bigl(\gamma(x), \gamma(v_i)\bigr)
      - d\bigl(\gamma(\bar x), \gamma(v_{\bar i})\bigr) \\
    &\ge d\bigl(\gamma(v_i), \gamma(v_{\bar i})\bigr)
      - d(x, v_i) - d(\bar x, v_{\bar i}) \\
    &\ge d\bigl(\gamma(v_i), \gamma(v_{\bar i})\bigr)
      - \frac{\lfloor L\lambda \rfloor}{2}
      - \lfloor L\lambda \rfloor - \frac{n}{2} \\
    &= d\bigl(r \comp \alpha(v_i), r \comp \alpha(v_{\bar i})\bigr)
      - \frac{3\lfloor L\lambda \rfloor}{2} - \frac{n}{2} \\
    &\ge \frac{1}{L} d_{\lambda S_n^0}\bigl(v_i, v_{\bar i})
      - \frac{3\lfloor L\lambda \rfloor}{2} - \frac{n}{2} \\
    &= \frac{\lambda}{L} \Bigl\lfloor \frac{n}{2} \Bigr\rfloor
      - \frac{3\lfloor L\lambda \rfloor}{2}
      - \frac{n}{2} \\
    &\ge \frac{\lambda}{L} \Bigl( \frac{n-1}{2} \Bigr)
      - \frac{3L\lambda}{2} - \frac{n}{2} \\
    &= \frac{1}{m}\Bigl( \frac{\lambda(n-1)}{L}
      - 3L\lambda - n \Bigr)\frac{|S_m|}{2}
  \end{align*}
  Since $m \le nL\lambda$, the above computation implies that
  \[ d\bigl(\gamma(x), \gamma(\bar x)\bigr) \ge \Bigl(
    \frac{n-1}{nL^2} - \frac{3}{n} - \frac{1}{L\lambda}
    \Bigr)\frac{|S_m|}{2} \] so, given $\alpha$ as above, we can
  obtain a $\frac{1}{K'}$-almost isometric cycle in $\Gamma$ of length
  $m$, where
  $\frac{1}{K'} = \bigl( \frac{n-1}{nL^2} - \frac{3}{n} -
  \frac{1}{L\lambda} \bigr)$.  We need only show there exist $\alpha$
  for which $\frac{1}{K'}$ is aritrarily close to $1$ and $m$ is
  arbitrarily large.  By hypothesis, there exist $\alpha$ for which
  $K = \frac{n+1}{n}$ and $\lambda > n$, for arbitrary $n \in \N$.
  But then, as $n \to \infty$, we have
  $m \ge n\bigl(\lfloor L\lambda \rfloor-1\bigr) \to \infty$ and
  $L = \frac{K\lambda + 1}{\lambda - K} \to K$ so
  $\frac{1}{K'} = \bigl( \frac{n-1}{nL^2} - \frac{3}{n} -
  \frac{1}{L\lambda} \bigr) \to 1$.
\end{proof}

Let $X$ be a metric space.  Let $\mathscr{U}$ be a nonprincipal
ultrafilter on $\N$.  Let $(b^{(m)})_{m\in\N}$ be a sequence in $X$.
Let $(s^{(m)})_{m\in\N}$ be a sequence of positive reals such that
$s^{(m)} \to \infty$ as $m \to \infty$.  Consider the set
\[ \mathfrak{X}' = \Biggl\{(x_m)_{m\in\N} \sth
  \text{$\biggl(\frac{d(x_m,b^{(m)})}{s^{(m)}}\biggr)_{m\in\N}$ is
    bounded} \Biggr\} \] of \defterm{sequences in $X$ that are bounded
  with respect to} the \defterm{basepoint sequence} $(b^{(m)})_m$ and
the \defterm{scaling sequence} $(s^{(m)})_m$.  For
$(x_m)_m, (x'_m)_m \in \mathfrak{X}'$,
\[ \bar d\bigl((x_m)_m,(x'_m)_m\bigr) = \lim_{\mathscr{U}}
  \frac{d(x_m,x'_m)}{s^{(m)}} \] defines a pseudometric on
$\mathfrak{X}'$.  The \defterm{asymptotic cone} $\mathfrak{X}$ of $X$
with respect to the nonprincipal ultrafilter $\mathscr{U}$, the
\defterm{basepoint sequence} $(b^{(m)})_m$ and the \defterm{scaling
  sequence} $(s^{(m)})_m$ is the metric space obtained from
$\mathfrak{X}'$ and $\bar d$ by identifying $(x_m)_m$ and $(x'_m)_m$
whenever $\bar d\bigl((x_m)_m,(x'_m)_m\bigr) = 0$.

Note that \Thmref{general_metric_neg_equiv} and
\Corref{as_cones_general_ss} apply to general metric spaces and not
just rough geodesic metric spaces.

\begin{thm}
  \thmlabel{general_metric_neg_equiv} Let $X$ be a metric space.  Then
  the following conditions are equivalent.
  \begin{enumerate}
  \item \itmlabel{general_asymptotic_circle} There is an asymptotic
    cone of $X$ that contains an isometric copy of the Riemannian
    circle of unit length.
  \item \itmlabel{general_ngons} $X$ approximates $n$-gons.
  \end{enumerate}
\end{thm}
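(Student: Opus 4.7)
The plan is to prove the two implications separately. In both directions the key discretization is that the $n$ evenly-spaced points $p_0, \ldots, p_{n-1}$ on the unit Riemannian circle $S^1$ form an isometric copy of $\tfrac{1}{n} S_n^0$ inside $S^1$.

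For $\pitmref{general_asymptotic_circle} \Rightarrow \pitmref{general_ngons}$, suppose some asymptotic cone $\mathfrak{X}$ of $X$, formed with ultrafilter $\mathscr{U}$, basepoint sequence $(b^{(m)})_m$ and scaling sequence $(s^{(m)})_m$, contains an isometric copy $\phi \colon S^1 \to \mathfrak{X}$ of the unit Riemannian circle. Fix $n \in \N$ and $\epsilon > 0$, and for each $i \in \{0, \ldots, n-1\}$ pick a representative sequence $(x_m^{(i)})_m$ of $\phi(p_i)$. Since there are only finitely many pairs $(i,j)$, the inequality $\bigl| d(x_m^{(i)}, x_m^{(j)})/s^{(m)} - \tfrac{1}{n} d_{S_n^0}(v_i, v_j) \bigr| < \epsilon/n$ holds simultaneously for all such pairs on a $\mathscr{U}$-large set of $m$. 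Because distinct $v_i, v_j$ satisfy $d_{S_n^0}(v_i, v_j) \ge 1$, for each such $m$ the assignment $v_i \mapsto x_m^{(i)}$ is a $\tfrac{1+\epsilon}{1-\epsilon}$-bilipschitz embedding of $\tfrac{s^{(m)}}{n} S_n^0$ into $X$. Since the $\mathscr{U}$-large set is infinite and $s^{(m)} \to \infty$, picking $\epsilon$ small relative to a prescribed $K > 1$ and $m$ large in this set yields $K$-bilipschitz embeddings of $\lambda S_n^0$ in $X$ with $\lambda$ arbitrarily large, showing $X$ approximates $n$-gons.

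For $\pitmref{general_ngons} \Rightarrow \pitmref{general_asymptotic_circle}$, I would construct the required cone by a diagonal argument. By hypothesis, for each $m \in \N$ there exist $n_m \ge m$, $\lambda_m \ge m$ and a $K_m$-bilipschitz embedding $\alpha_m \colon \lambda_m S_{n_m}^0 \to X$ with $K_m \le 1 + \tfrac{1}{m}$. Set $s^{(m)} = n_m \lambda_m$, fix a vertex $v_0^{(m)} \in S_{n_m}^0$, put $b^{(m)} = \alpha_m(v_0^{(m)})$, fix a nonprincipal ultrafilter $\mathscr{U}$, and let $\mathfrak{X}$ be the corresponding asymptotic cone. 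For each $t \in S^1$ choose $i_m(t) \in \{0, \ldots, n_m - 1\}$ with $|i_m(t)/n_m - t| \le 1/n_m$ and define $\phi(t) \in \mathfrak{X}$ to be the class of $(\alpha_m(v_{i_m(t)}^{(m)}))_m$; the bound $d(\alpha_m(v_{i_m(t)}^{(m)}), b^{(m)}) \le K_m s^{(m)}/2$ shows this sequence is bounded with respect to $(b^{(m)})_m$. The $K_m$-bilipschitz property of $\alpha_m$ yields
\[ \frac{1}{K_m} \cdot \frac{d_{S_{n_m}^0}(v_{i_m(s)}^{(m)}, v_{i_m(t)}^{(m)})}{n_m} \le \frac{d(\alpha_m(v_{i_m(s)}^{(m)}), \alpha_m(v_{i_m(t)}^{(m)}))}{s^{(m)}} \le K_m \cdot \frac{d_{S_{n_m}^0}(v_{i_m(s)}^{(m)}, v_{i_m(t)}^{(m)})}{n_m}, \]
and since $d_{S_{n_m}^0}(v_{i_m(s)}^{(m)}, v_{i_m(t)}^{(m)})/n_m \to d_{S^1}(s, t)$ and $K_m \to 1$, the ultralimit equals $d_{S^1}(s, t)$, so $\phi$ is an isometric embedding of $S^1$ into $\mathfrak{X}$. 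The main obstacle is this diagonal setup: $n_m$, $\lambda_m$ and $1/(K_m - 1)$ must all be simultaneously driven to infinity and the scaling chosen so that, inside $\mathfrak{X}$, the rescaled vertex sets $\tfrac{1}{n_m} S_{n_m}^0$ converge to the full unit Riemannian circle rather than degenerating to a point or covering only a proper subinterval.
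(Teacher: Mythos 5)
Your proposal is correct, and the two directions have different relationships to the paper. The implication $(1) \Rightarrow (2)$ is essentially the paper's argument: finitely many pairs of evenly spaced points on the embedded circle, ultrafilter-largeness of the set of indices where all rescaled distances are simultaneously $\epsilon$-close to their limits, and the lower bound $d_{S_n^0}(v_i,v_j) \ge 1$ to convert the additive error into a multiplicative constant close to $1$, with $s^{(m)} \to \infty$ supplying arbitrarily large $\lambda$. For $(2) \Rightarrow (1)$ you take a genuinely different route. The paper fixes circles with $2^m$ vertices so that the rescaled vertex sets form nested dyadic subsets of $\R/\Z$; it then isometrically embeds the dense dyadic circle into the cone (each dyadic point having a representative sequence that is eventually given by the $\alpha_m$) and invokes completeness of asymptotic cones to upgrade to the full circle. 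You instead allow arbitrary $n_m \to \infty$ and define the embedding on all of $S^1$ at once by nearest-vertex approximation, letting the ultralimit absorb both the bilipschitz error $K_m \to 1$ and the discretization error $O(1/n_m)$. Your version avoids the nested-subdivision bookkeeping and the citation that asymptotic cones are complete, at the small cost of checking boundedness of the representative sequences and the convergence $d_{S_{n_m}^0}(v_{i_m(s)},v_{i_m(t)})/n_m \to d_{S^1}(s,t)$, both of which you do; and the ``main obstacle'' you flag at the end is already handled by your choices $n_m = m$, $K_m \le 1 + \tfrac{1}{m}$, $\lambda_m \ge m$, which the definition of approximating $n$-gons delivers simultaneously.
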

\begin{proof}
  $\pitmref{general_asymptotic_circle} \Rightarrow
  \pitmref{general_ngons}$ Suppose $S \subset \mathcal{X}$ is a
  subspace isometric to the Riemannian circle of unit length in the
  \defterm{asymptotic cone} $\mathcal{X}$ of $X$ with respect to a
  nonprincipal ultrafilter $\mathscr{U}$, a \defterm{basepoint
    sequence} $(b^{(m)})_m$ and a \defterm{scaling sequence}
  $(s^{(m)})_m$.  Take any $n \in \N$, any $K > 1$ and any
  $\Lambda > 0$.  We will construct a $K$-bilipschitz map
  $\alpha \colon \lambda S_n^0 \to X$ with $\lambda \ge \Lambda$.
  Subdivide $S$ into $n$ segments of equal length and let $S^0$ denote
  the set of endpoints of the segments.  For each $\epsilon > 0$ and
  each $p,q \in S^0$ reprented by $(p_m)_m$ and $(q_m)_m$, there is an
  $A_{\epsilon}^{p,q} \in \mathscr{U}$ such that
  \[ d(p,q) - \epsilon \le \frac{d(p_m,q_m)}{s^{(m)}} \le d(p,q) +
    \epsilon \] for all $m \in A_{\epsilon}^{p,q}$.  There are
  finitely many pairs $p,q \in S^0$ so
  $A_{\epsilon} = \bigcap_{p,q} A_{\epsilon}^{p,q} \in \mathscr{U}$.
  Then, for any distinct $p,q \in S^0$,
  \[ \Bigl(1 - \frac{\epsilon}{d(p,q)}\Bigr)s^{(m)}d(p,q) \le
    d(p_m,q_m) \le \Bigl(1 +
    \frac{\epsilon}{d(p,q)}\Bigr)s^{(m)}d(p,q) \] for all
  $m \in A_{\epsilon}$.  But $d(p,q) \ge \frac{1}{n}$ and so 
  \[ (1 - n\epsilon)s^{(m)}d(p,q) \le d(p_m,q_m) \le (1 +
    n\epsilon)s^{(m)}d(p,q) \] for all $m \in A_{\epsilon}$.  So if
  $n\epsilon < 1$ then, for $m \in A_{\epsilon}$, the map
  \begin{align*}
    \alpha_m \colon s^{(m)} S^0 &\to X \\
    p &\mapsto p_m
  \end{align*}
  is bilipschitz with bilipschitz constant
  $\max\bigl\{1 + n\epsilon, \frac{1}{1-n\epsilon} \bigr\} =
  \frac{1}{1-n\epsilon}$.  The space $s^{(m)} S^0$ is isometric to
  $\frac{s^{(m)}}{n} S_n^0$ so if we chose $\epsilon$ small enough so
  that $n\epsilon < 1$ and $\frac{1}{1-n\epsilon} < K$ and we take
  $m \in A_{\epsilon}$ large enough that
  $\frac{s^{(m)}}{n} \ge \Lambda$ then we can take
  $\alpha = \alpha_m$.
  
  $\pitmref{general_ngons} \Rightarrow
  \pitmref{general_asymptotic_circle}$ For $m \in \N$, there exists a
  $\frac{m+1}{m}$-bilipschitz map
  $\alpha_m \colon \lambda_m S_{2^m}^0 \to X$ with $\lambda_m \ge m$.
  Metrize the group
  $\frac{1}{2^m}\Z = \bigl\{\frac{k}{2^m} \sth k \in \Z\bigr\} \subset
  \R$ with the subspace metric and metrize the quotient group
  $\frac{1}{2^m}\Z/\Z$ with the quotient metric.  Then
  $\frac{1}{2^m} S_{2^m}^0$ is isometric to $\frac{1}{2^m}\Z/\Z$.  Via
  this isometry we identify the vertex set $S_{2^m}^0$ with the
  elements of $\frac{1}{2^m}\Z/\Z$.  Thus we view
  $\frac{1}{2^m} S_{2^m}^0$ as a metric subspace of the Riemannian
  circle of unit length $S = \R/\Z$.  By this identification, the
  union $S_{\mathbb{D}} = \bigcup_{m \in \N} S_{2^m}^0 \subset S$ is
  the \defterm{dyadic circle} $\Z\bigl[\frac{1}{2}\bigr]/\Z$.  The
  dyadic circle $S_{\mathbb{D}}$ is dense in $S$.  Thus, since
  asymptotic cones are complete metric spaces
  \cite[Proposition~10.70]{Drutu:2018}, it will suffice to
  isometrically embed $S_{\mathbb{D}}$ into an asymptotic cone of $X$.

  View $\alpha_m$ as an $\frac{m+1}{m}$-bilipschitz map from
  $\frac{1}{2^m} S_{2^m}^0$ to $\frac{1}{\lambda_m 2^m}X$.  Set
  $b^{(m)} = \alpha_m(0)$ and set $s^{(m)} = \lambda_m 2^m$.  Every
  nonzero element of $S_{\mathbb{D}}$ can be uniquely represented as
  $\frac{k}{2^{\ell}}$ with $k$ odd and satisfying
  $0 \le k < 2^{\ell}$.  For any such representation
  $\frac{k}{2^{\ell}}$ and any $m \in \N$, set
  \[ x_{\frac{k}{2^{\ell}}}^{(m)} =
    \begin{cases}
      b^{(m)} &\text{if $m < \ell$} \\
      \alpha_m\bigl(\frac{k}{2^{\ell}}\bigr) &\text{if $m \ge \ell$}
    \end{cases} \] and set $x_0^{(m)} = b^{(m)}$.  Then, for any non
  principal ultrafilter $\mathscr{U}$, the expression
  $p \mapsto (x_p^{(m)})_m$ defines an isometric embedding of
  $S_{\mathbb{D}}$ into the asymptotic cone $\mathcal{X}$ of $X$ with
  respect to $\mathscr{U}$, the basepoint sequence $(b^{(m)})_m$ and
  the scaling sequence $S^{(m)}$.  Indeed, for every
  $p,q \in S_{\mathbb{D}}$,
  \[ \frac{d(x_p^{(m)},x_q^{(m)})}{s^{(m)}} =
    \frac{d\bigl(\alpha_m(p),\alpha_m(q)\bigr)}{\lambda_m 2^m} \le
    \frac{\frac{m+1}{m} \cdot \lambda_m d_{S_{2^m}}(p,q)}{\lambda_m
      2^m} = \frac{m+1}{m} \cdot d_{S_{\mathbb{D}}}(p,q) \]
  and
  \[ \frac{d(x_p^{(m)},x_q^{(m)})}{s^{(m)}} =
    \frac{d\bigl(\alpha_m(p),\alpha_m(q)\bigr)}{\lambda_m 2^m} \ge
    \frac{\frac{m}{m+1} \cdot \lambda_m d_{S_{2^m}}(p,q)}{\lambda_m
      2^m} = \frac{m}{m+1} \cdot d_{S_{\mathbb{D}}}(p,q) \] whenever
  $m$ is large enough.
\end{proof}

\begin{cor}
  \corlabel{as_cones_general_ss} Let $X$ be a metric space and let
  $\mathfrak{X}$ be an asymptotic cone of $X$.  Suppose that $X$ does
  not approximate $n$-gons.  Then $\mathfrak{X}$ does not approximate
  $n$-gons.
\end{cor}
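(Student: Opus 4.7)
The plan is to prove the contrapositive: assuming that $\mathfrak{X}$ approximates $n$-gons, I would produce $K''$-bilipschitz embeddings of arbitrarily large scalings of $S_n^0$ into $X$, for any prescribed $K'' > 1$ and $n \in \N$, which is precisely what $X$ approximating $n$-gons means. Fix such a target $K''$, the integer $n$ and a scale $\Lambda > 0$, and choose some $K \in (1, K'')$. By the hypothesis on $\mathfrak{X}$, there is a $K$-bilipschitz embedding $\beta \colon \lambda S_n^0 \to \mathfrak{X}$ with $\lambda$ as large as I wish.

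Writing $\mathfrak{X}$ as the asymptotic cone associated to a nonprincipal ultrafilter $\mathscr{U}$, a basepoint sequence $(b^{(m)})_m$ and a scaling sequence $(s^{(m)})_m$, I would pick representative sequences $(\beta(v)_m)_m$ for each of the finitely many $\beta(v)$ with $v \in S_n^0$. A finite intersection of ultrafilter-large sets, exactly as in the proof of the implication $\pitmref{general_asymptotic_circle} \Rightarrow \pitmref{general_ngons}$ of \Thmref{general_metric_neg_equiv}, yields for any $\epsilon > 0$ an $A_\epsilon \in \mathscr{U}$ on which
\[ d_{\mathfrak{X}}\bigl(\beta(p), \beta(q)\bigr) - \epsilon \le \frac{d_X\bigl(\beta(p)_m, \beta(q)_m\bigr)}{s^{(m)}} \le d_{\mathfrak{X}}\bigl(\beta(p), \beta(q)\bigr) + \epsilon \]
holds simultaneously for every pair $p, q \in S_n^0$. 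Combining this with the bilipschitz bounds on $\beta$ and dividing through by $d_{\lambda S_n^0}(p, q) \ge \lambda$, the map $\alpha_m \colon s^{(m)} \lambda S_n^0 \to X$ defined by $\alpha_m(p) = \beta(p)_m$ turns out to be bilipschitz with constant at most $\max\bigl\{K + \frac{\epsilon}{\lambda},\, \bigl(\frac{1}{K} - \frac{\epsilon}{\lambda}\bigr)^{-1}\bigr\}$.

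To finish, I would choose $\epsilon$ small and $\lambda$ large enough that this constant falls strictly below $K''$ (it tends to $K$ as $\epsilon/\lambda \to 0$), and then pick any $m \in A_\epsilon$ with $s^{(m)} \lambda \ge \Lambda$; such an $m$ exists because $s^{(m)} \to \infty$ makes $\{m : s^{(m)} \lambda \ge \Lambda\}$ cofinite, hence a member of $\mathscr{U}$, so its intersection with $A_\epsilon$ is non-empty. The resulting $\alpha_m$ is the required $K''$-bilipschitz embedding of scale at least $\Lambda$. No step is genuinely difficult: this is a parametrized rerun of the ultralimit-representative computation already present in \Thmref{general_metric_neg_equiv}, and the only care needed is in the order of choices, fixing $K$ first, then the $\epsilon/\lambda$ threshold forcing the bilipschitz constant below $K''$, and only then invoking the hypothesis on $\mathfrak{X}$ to get $\lambda$ large relative to $\epsilon$.
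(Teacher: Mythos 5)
Your argument is correct, but it is a genuinely different route from the one taken in the paper. The paper deduces the corollary in two lines from \Corref{general_metric_equiv}: nonapproximability of $n$-gons is equivalent to no asymptotic cone containing an isometric unit-length Riemannian circle, and since any asymptotic cone $\mathfrak{X}'$ of $\mathfrak{X}$ is isometric to an asymptotic cone of $X$ itself (the iterated-cone result \cite[Corollary~10.80]{Drutu:2018}), the circle-free condition passes from $X$ to $\mathfrak{X}$. You instead prove the contrapositive directly, pulling a $K$-bilipschitz copy of $\lambda S_n^0$ in $\mathfrak{X}$ back to $X$ via representative sequences and a finite intersection of ultrafilter-large sets, exactly parallel to the $\pitmref{general_asymptotic_circle} \Rightarrow \pitmref{general_ngons}$ computation in \Thmref{general_metric_neg_equiv}; your quantifier bookkeeping (fix $K''$, then $K$, then force $\max\bigl\{K + \frac{\epsilon}{\lambda}, \bigl(\frac{1}{K}-\frac{\epsilon}{\lambda}\bigr)^{-1}\bigr\} < K''$, then pick $m \in A_\epsilon$ with $s^{(m)}\lambda \ge \Lambda$ using that cofinite sets lie in $\mathscr{U}$) is sound. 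What the paper's route buys is brevity, at the cost of invoking the nontrivial external fact that cones of cones are cones; what yours buys is self-containedness — only the definition of the asymptotic cone is used — and it even shows slightly more, since $s^{(m)} \to \infty$ amplifies scales, so a single $K$-bilipschitz $n$-gon in $\mathfrak{X}$ at each $K$ (rather than at arbitrarily large scales) already forces $X$ to approximate $n$-gons.
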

\begin{proof}
  By \Thmref{general_metric_neg_equiv}, it suffices to show that any
  asymptotic cone $\mathfrak{X}'$ of $\mathfrak{X}$ does not contain
  an isometric copy of a Riemannian circle of unit length.  But
  $\mathfrak{X}'$ is isometric to an asymptotic cone of $X$
  \cite[Corollary~10.80]{Drutu:2018} so does not contain an isometric
  copy of a Riemannian circle of unit length by
  \Thmref{general_metric_neg_equiv}.
\end{proof}

\begin{thm}
  \thmlabel{not_sshortcut_equiv} Let $X$ be an $R$-rough geodesic
  metric space.  The following conditions are equivalent.
  \begin{enumerate}
  \item \itmlabel{not_sshortcut} $X$ is not strongly shortcut.
  \item \itmlabel{k_2r_quasi_circles} For every $L > 1$ there exist
    $(L,4R)$-quasi-isometric embeddings of arbitrarily long
    Riemannian circles in $X$.
  \item \itmlabel{quasi_circles} For every $L > 1$ there is a
    $C \ge 0$ such that there exist $(L,C)$-quasi-isometric embeddings
    of arbitrarily long Riemannian circles in $X$.
  \item \itmlabel{ngons} $X$ approximates $n$-gons.
  \item \itmlabel{asymptotic_circle} There is an asymptotic cone of
    $X$ that contains an isometric copy of the Riemannian circle of
    unit length.
  \end{enumerate}
\end{thm}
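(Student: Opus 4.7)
I propose proving the cycle $(1)\Rightarrow(2)\Rightarrow(3)\Rightarrow(4)\Rightarrow(1)$, and then invoking the already-established general-metric equivalence $(4)\Leftrightarrow(5)$ from \Thmref{general_metric_neg_equiv} (which applies since it only uses condition \pitmref{general_ngons}, identical to \pitmref{ngons} here).

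For $(1)\Rightarrow(2)$, fix $L>1$ and apply the Circle Tightening Lemma with $N=2$, $C=4R$, and $K>1$ chosen small enough and $M>0$ chosen large enough as the lemma demands. Since $X$ is not strongly shortcut, for this particular $K$ there exist $\tfrac{1}{K}$-almost isometric $R$-circles $\alpha\colon S\to X$ of length $|S|>M$, and $|S|$ can be made arbitrarily large. The lemma replaces a collection of arcs $\{Q_i\}$ of total length less than $|S|/N=|S|/2$ with shorter intervals $\bar Q_i$, producing an $(L,4R)$-quasi-isometric embedded Riemannian circle whose length exceeds $|S|-\sum_i|Q_i|>|S|/2$, hence can be made arbitrarily large. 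The implication $(2)\Rightarrow(3)$ is immediate by taking $C=4R$.

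For $(3)\Rightarrow(4)$, fix $K>1$ and $n\in\N$, pick $L\in(1,K)$, and let $C\ge 0$ be the constant provided by $(3)$. For arbitrarily large $\ell>0$ take an $(L,C)$-quasi-isometric embedding $\beta\colon S\to X$ with $|S|=\ell$, subdivide $S$ into $n$ equal arcs with endpoint set $Y\subset S$ (so $Y$ is isometric to $\tfrac{\ell}{n} S_n^0$), and restrict $\beta$ to $Y$. Since distinct points of $Y$ are at distance at least $\ell/n$, the quasi-isometry inequality upgrades to a bilipschitz estimate with constant $\tfrac{L\ell/n + C}{\ell/n - C/L}\to L<K$ as $\ell\to\infty$. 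For $\ell$ large enough this restriction is a $K$-bilipschitz embedding of $\tfrac{\ell}{n} S_n^0$ with $\ell/n$ as large as we wish.

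The implication $(4)\Rightarrow(1)$ is the rough-geodesic analogue of $\pitmref{graph_ngons}\Rightarrow\pitmref{not_sshortcut_as_graph}$ from \Thmref{graph_neg_equiv}. Given $K>1$ close to $1$ and $n\in\N$, take a $K$-bilipschitz embedding $\alpha\colon \lambda S_n^0\to X$ with $\lambda$ large, write $v_0,\ldots,v_{n-1}$ for the vertices, and for each $i$ pick an $R$-rough geodesic $\gamma_i$ in $X$ from $\alpha(v_i)$ to $\alpha(v_{i+1})$, which has length at most $K\lambda$. Following \Rmkref{rpath_concatenation}, insert a constant $R$-path of length $R$ at each junction to concatenate the $\gamma_i$ into a genuine $R$-circle $\gamma\colon S_m\to X$ with $m\le n(K\lambda+R)$, into which $\lambda S_n^0$ embeds canonically via the initial points of the arcs. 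For any antipodal pair $p,\bar p\in S_m$, pick vertices $v_i,v_{\bar i}$ with $\bar i=i+\lfloor n/2\rfloor$ nearest to $p,\bar p$ and estimate $d(\gamma(p),\gamma(\bar p))$ from below using the triangle inequality, exactly as in the graph case. This yields a $\tfrac{1}{K'}$-almost isometric $R$-circle of length $m$, with $\tfrac{1}{K'}\to 1$ and $m\to\infty$ as $n\to\infty$, $\lambda\to\infty$, and $K\to 1$, contradicting the strong shortcut property.

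The main obstacle is $(1)\Rightarrow(2)$, since it is the only step whose content is not elementary: it needs the Circle Tightening Lemma to convert almost-isometric $R$-circles (which may meander locally) into genuine quasi-isometric embeddings. The remaining implications are either trivial, routine subdivision arguments, or direct adaptations of the graph proof once one uses \Rmkref{rpath_concatenation} to handle the failure of $R$-paths to concatenate.
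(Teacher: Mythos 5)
Your proposal follows the same cycle $(1)\Rightarrow(2)\Rightarrow(3)\Rightarrow(4)\Rightarrow(1)$ that the paper uses, with $(4)\Leftrightarrow(5)$ delegated to \Thmref{general_metric_neg_equiv}, and each implication is argued in essentially the same way: $(1)\Rightarrow(2)$ by the Circle Tightening Lemma with $N=2$ and $C=4R$, $(3)\Rightarrow(4)$ by subdivision into $n$ equal arcs, and $(4)\Rightarrow(1)$ by joining the vertices of an embedded $n$-gon with $R$-rough geodesics, inserting constant $R$-paths at junctions per \Rmkref{rpath_concatenation}, and estimating antipodal distances as in the graph case. This matches the paper's proof; the only cosmetic difference is that the paper rescales each rough geodesic to a common length $L\lambda$ to simplify the final bookkeeping, whereas you allow varying lengths, which works but makes the estimates a bit less tidy.
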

\begin{proof}
  Conditions \pitmref{ngons} and \pitmref{asymptotic_circle} are
  equivalent for general metric spaces, by
  \Thmref{general_metric_neg_equiv}.  So it will suffice to prove the
  equivalence of conditions \pitmref{not_sshortcut},
  \pitmref{k_2r_quasi_circles}, \pitmref{quasi_circles} and
  \pitmref{ngons}.
  
  $\pitmref{not_sshortcut} \Rightarrow \pitmref{k_2r_quasi_circles}$
  Let $N = 2$, let $L > 1$ be arbitrary, let $K > 1$ be small enough
  to satisfy \Lemref{circle_tightening} and let
  $\alpha \colon S \to X$ be a $\frac{1}{K}$-almost isometric
  $R$-circle in $X$ with $|S|$ arbitrarily larger than the $M$ from
  \Lemref{circle_tightening}.  Then the limit $R$-circle
  $\alpha_{\infty} \colon S_{\infty} \to X$ given by
  \Lemref{circle_tightening} is an $(L,4R)$-quasi-isometric embedding
  of a Riemannian circle of length at least $\frac{|S|}{2}$.

  $\pitmref{k_2r_quasi_circles} \Rightarrow
  \pitmref{quasi_circles}$ This is immediate.

  $\pitmref{quasi_circles} \Rightarrow \pitmref{ngons}$ Let
  $\alpha \colon S \to X$ be $(L,C)$-quasi-isometric embedding of a
  Riemannian circle.  Let $n \in \N$, subdivide $S$ into $n$ segments
  of equal length and let $Y$ be the set of endpoints of the segments.
  Then $Y$ is isometric to $\frac{|S|}{n} S_n$ and
  \[\Bigl(\frac{1}{L} - \frac{nC}{|S|} \Bigr)d(p,q)
    \le d\bigl(\alpha(p),\alpha(q)\bigr) \le \Bigl(L +
    \frac{nC}{|S|}\Bigr)d(p,q) \] for distinct $p,q \in Y$.  By
  hypothesis, there exist arbitrarily long $\alpha$ with $L$
  arbitrarily close to $1$ and so $\alpha|_Y$ is a $K$-bilipschitz
  embedding of $\lambda S_n$ for $\lambda = \frac{|S|}{n}$ arbitrarily
  large and
  $K = \max\Bigl\{\bigl(\frac{1}{L} - \frac{nC}{|S|}\bigr)^{-1}, L +
  \frac{nC}{|S|}\Bigr\}$ arbitrarily close to $1$.

  $\pitmref{ngons} \Rightarrow \pitmref{not_sshortcut}$ Let
  $n \in \N$, let $L > 1$, let $\lambda > 0$ and let
  $\alpha \colon \lambda S_n^0 \to X$ be an $L$-bilipschitz embedding.
  View $S_n$ as the Cayley graph of $\Z/n\Z$ with generating set
  $\{1\}$ and, for $i \in \Z/n\Z$, let $v_i$ be the vertex of $S_n$
  corresponding to $i$.  Then, for each $i$, we have
  $d\bigl(\alpha(v_i), \alpha(v_{i+1})\bigr) \le L\lambda$ so, by
  scaling an $R$-rough geodesic, there is an $R$-path
  $\gamma'_i \colon P_i \to X$ of length $|P_i| = L\lambda$ from
  $\alpha(v_i)$ to $\alpha(v_{i+1})$.  Let $\gamma_i$ be the
  concatenation $c_i\gamma'_i$ where $c_i\colon [0,R] \to X$ is the
  constant path of length $R$ at $\alpha(v_i)$.  For each $i$,
  identify the endpoint of $P_i$ with the initial point of $P_{i+1}$
  to obtain an $R$-circle $\gamma \colon S \to X$ with
  $|S| = n(L\lambda + R)$.  Then $\alpha$ factors through $\gamma$ via
  the embedding that sends $v_i$ to the initial point of
  $P_i \subset S$.  So, viewing $S_n^0$ as a subset of $S$ via this
  embedding, we have $\alpha(v_i) = \gamma(v_i)$, for each $i$.  Let
  $x \in S$ and let $v_i$ minimize $d(x,v_i)$.  Then
  $d(x,v_i) \le \frac{L\lambda + R}{2}$ and if $\bar x$ is the
  antipode of $x$ and
  $\bar i = i + \bigl\lfloor \frac{n}{2} \bigr\rfloor$ then
  $d(\bar x, v_{\bar i}) \le L\lambda + R$ so we have the following
  computation.
  \begin{align*}
    d\bigl(\gamma(x), \gamma(\bar x)\bigr)
    &\ge d\bigl(\gamma(v_i), \gamma(v_{\bar i})\bigr)
      - d\bigl(\gamma(x), \gamma(v_i)\bigr) 
      - d\bigl(\gamma(\bar x), \gamma(v_{\bar i})\bigr) \\
    &\ge d\bigl(\gamma(v_i), \gamma(v_{\bar i})\bigr)
      - d(x, v_i) - R - d(\bar x, v_{\bar i}) - R \\
    &\ge d\bigl(\gamma(v_i), \gamma(v_{\bar i})\bigr)
      - \frac{L\lambda + R}{2}
      - (L\lambda+R) - 2R \\
    &= d\bigl(\alpha(v_i), \alpha(v_{\bar i})\bigr)
      - \frac{3L\lambda + 7R}{2} \\
    &\ge \frac{1}{L} d_{\lambda S_n^0}\bigl(v_i, v_{\bar i})
      - \frac{3L\lambda + 7R}{2} \\
    &= \frac{\lambda}{L} \Bigl\lfloor \frac{n}{2} \Bigr\rfloor
      - \frac{3L\lambda + 7R}{2} \\
    &\ge \frac{\lambda}{L} \Bigl( \frac{n-1}{2} \Bigr)
      - \frac{3L\lambda + 7R}{2} \\
    &= \frac{1}{|S|}\Bigl( \frac{\lambda(n-1)}{L}
      - 3L\lambda - 7R \Bigr)\frac{|S|}{2} \\
    &= \frac{1}{n(L\lambda + R)}\Bigl( \frac{\lambda(n-1)}{L}
      - 3L\lambda - 7R \Bigr)\frac{|S|}{2} \\
    &= \biggl( \frac{\lambda n-\lambda}{L^2\lambda n + nLR}
      - \frac{3L\lambda}{L\lambda n + nR}
      - \frac{7R}{L\lambda n + nR} \biggr)\frac{|S|}{2}
  \end{align*}
  So, given $\alpha$ as above, we can obtain a $\frac{1}{K}$-almost
  isometric $R$-circle in $X$ of length $|S| = n(L\lambda + R)$, where
  $\frac{1}{K} = \Bigl( \frac{\lambda n-\lambda}{L^2\lambda n + nLR}
  - \frac{3L\lambda}{L\lambda n + nR} - \frac{7R}{L\lambda n + nR}
  \Bigr)$.  We need only show there exist $\alpha$ for which
  $\frac{1}{K}$ is aritrarily close to $1$ and $|S|$ is arbitrarily
  large.  By hypothesis, there exist $\alpha$ for which
  $L = \frac{n+1}{n}$ and $\lambda > n$, for arbitrary $n \in \N$.
  But then, as $n \to \infty$, we have
  $|S| = n(L\lambda + R) \to \infty$ and
  $\frac{1}{K} = \Bigl( \frac{\lambda n-\lambda}{L^2\lambda n + nLR}
  - \frac{3L\lambda}{L\lambda n + nR} - \frac{7R}{L\lambda n + nR}
  \Bigr) \to 1$.
\end{proof}

\begin{cor}
  \corlabel{as_cones_ss} Let $X$ be a metric space.  If $X$ is
  strongly shortcut then every asymptotic cone of $X$ is strongly
  shortcut.
\end{cor}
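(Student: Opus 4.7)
The plan is to chain together the equivalences established in \Corref{sshortcut_equiv} with the metric-space statement \Corref{as_cones_general_ss}. Since $X$ is strongly shortcut, it is in particular $R$-rough geodesic for some $R \ge 0$, so by the equivalence \pitmref{sshortcut} $\Leftrightarrow$ \pitmref{ngons_bounded} of \Corref{sshortcut_equiv} we know $X$ does not approximate $n$-gons. This is a purely metric condition, and \Corref{as_cones_general_ss} then gives that no asymptotic cone $\mathfrak{X}$ of $X$ approximates $n$-gons either.

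To conclude that $\mathfrak{X}$ is itself strongly shortcut, I need to feed the nonapproximation of $n$-gons back through \Corref{sshortcut_equiv}, which requires $\mathfrak{X}$ to be rough geodesic. The standard fact I would invoke is that the asymptotic cone of an $R$-rough geodesic metric space is an honest geodesic metric space: given two points $[(x_m)_m]$ and $[(y_m)_m]$, one chooses for each $m$ an $R$-rough geodesic from $x_m$ to $y_m$ and takes the ultralimit of these paths, rescaled by $s^{(m)}$. The additive $R$ constants disappear in the limit because $R/s^{(m)} \to 0$, producing a genuine geodesic in $\mathfrak{X}$. Hence $\mathfrak{X}$ is $0$-rough geodesic, and applying \Corref{sshortcut_equiv} in the other direction (item \pitmref{ngons_bounded} $\Rightarrow$ item \pitmref{sshortcut}) finishes the proof.

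The only real step of substance is ensuring the asymptotic cone is rough geodesic, and this is well known and essentially immediate from the scaling in the definition of the asymptotic cone. Everything else is bookkeeping around the equivalences already proved in this section. A clean write-up would thus be three sentences: apply \Corref{sshortcut_equiv}, apply \Corref{as_cones_general_ss}, note that $\mathfrak{X}$ is geodesic and apply \Corref{sshortcut_equiv} again.
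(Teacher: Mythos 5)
Your proposal is correct and takes essentially the same route as the paper, whose proof is simply to chain \Corref{sshortcut_equiv} with \Corref{as_cones_general_ss}. The only difference is that you make explicit the (standard and correct) fact that an asymptotic cone of a rough geodesic space is geodesic, which is needed to apply \Corref{sshortcut_equiv} to the cone and which the paper leaves implicit.
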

\begin{proof}
  Follows immediately from \Thmref{not_sshortcut_equiv} and
  \Corref{as_cones_general_ss}.
\end{proof}

\begin{cor}
  \corlabel{rough_approx_inv} The strong shortcut property is a rough
  approximability invariant of rough geodesic metric spaces.  In
  particular, the strong shortcut property is a rough similarity
  invariant of rough geodesic metric spaces.
\end{cor}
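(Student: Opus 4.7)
The plan is to reduce the corollary to the combination of two earlier results in the paper, namely \Corref{sshortcut_equiv} (the equivalence between strong shortcutness and nonapproximation of $n$-gons for rough geodesic spaces) and \Propref{not_approx_ngon_invariant} (nonapproximability of $n$-gons is a rough approximability invariant for arbitrary metric spaces). The point is that once strong shortcutness has been recharacterized as an approximation property for the discrete scaled vertex sets $\lambda S_n^0$, the rough-geodesicity hypothesis is no longer needed for the pullback step and the general metric-space invariance proposition does the heavy lifting.

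Concretely, I would argue as follows. Let $X$ be a strongly shortcut rough geodesic metric space. Apply \Corref{sshortcut_equiv} to $X$ to obtain that $X$ does not approximate $n$-gons; more precisely, there exist $K_X > 1$, $n_X \in \N$ and $\Lambda_X > 0$ witnessing the failure of approximation at those parameters, and this is exactly the hypothesis used in \Propref{not_approx_ngon_invariant}. Inspecting the proof of \Propref{not_approx_ngon_invariant} gives a constant $L_X > 1$ (one may take any $L_X \in (1,K_X)$) with the property that for every metric space $Y$, every $C > 0$ and every $(L_X,C)$-quasi-isometry up to scaling $f \colon Y \to X$, the space $Y$ also fails to approximate $n$-gons. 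Now assume in addition that $Y$ is rough geodesic. Then \Corref{sshortcut_equiv} applies to $Y$ as well, and nonapproximability of $n$-gons for $Y$ is equivalent to $Y$ being strongly shortcut. Hence $Y$ is strongly shortcut, which is exactly the rough approximability invariance statement.

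The "in particular" clause about rough similarity invariance is then immediate: a rough similarity is by definition a $(1,C)$-quasi-isometry up to scaling for some $C > 0$, and $1 < L_X$, so any rough similarity $f \colon Y \to X$ is an $(L_X,C)$-quasi-isometry up to scaling and the previous paragraph applies.

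I do not anticipate any real obstacle here; essentially all of the work has been done in \Propref{not_approx_ngon_invariant} and \Corref{sshortcut_equiv}. The only mild care needed is to make sure the constant $L_X$ that makes the corollary work is the one produced by \Propref{not_approx_ngon_invariant} applied to the specific failure-of-approximation constants for $X$, rather than something depending on $Y$; this is already built into the statement of \Propref{not_approx_ngon_invariant}, so the corollary follows without additional computation.
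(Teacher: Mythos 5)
Your proposal is correct and follows essentially the same route as the paper, which proves the corollary by combining \Corref{sshortcut_equiv} with \Propref{not_approx_ngon_invariant} exactly as you describe. The only difference is that you spell out the bookkeeping of the constant $L_X$ and the translation back for the rough geodesic space $Y$, which the paper leaves implicit.
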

\begin{proof}
  Follows immediately from \Thmref{not_sshortcut_equiv} and
  \Propref{not_approx_ngon_invariant}.
  % Thus, there exists a $K > 1$ such
  % that for every $C' > 0$ there is a bound $N_{C'} > 0$ on the lengths
  % of the $(K,C')$-quasi-isometric embeddings of Riemannian circles in
  % $X$.  We will show that any $L_X \in (1,K)$ satisfies the definition
  % of rough approximability invariant.
  % 
  % Let $\lambda > 0$, let $C > 0$ and let $f \colon \lambda Y \to X$ be
  % an $(L_X,C)$-quasi-isometry of metric spaces.  Let $S$ be a
  % Riemannian circle, let $C' > 0$ and let $\alpha \colon S \to Y$ be a
  % $\bigl(\frac{K}{L_X},C'\bigr)$-quasi-isometric embedding.  Then
  % $\alpha$ is a $\bigl(\frac{K}{L_X},\lambda C'\bigr)$-quasi-isometric
  % embedding when viewed as a function from $\lambda S$ to $\lambda Y$
  % so $f \comp \alpha$ is a $(K, L_X\lambda C' + C)$-quasi-isometric
  % embedding from $\lambda S$.  Hence
  % $|\lambda S| \le N_{L_X \lambda C' + C}$ so that
  % $|S| \le \frac{N_{L_X \lambda C' + C}}{\lambda}$ and we see that $Y$
  % satisfies condition \pitmref{quasi_circles_bounded} of
  % \Thmref{not_sshortcut_equiv}.
\end{proof}

\subsection{Instability under quasi-isometries}
\sseclabel{not_qi_inv}

\begin{figure}[ht]
  \newcommand{\slen}{2mm}
  \newcommand{\pos}[1]{#1*(#1+1)*0.5}
  \centering
  \begin{tikzpicture}
    \draw[step=\slen,rotate=-45]
    ($\pos{0}*(\slen,\slen)$)
    grid ($\pos{1}*(\slen,\slen)$)
    grid ($\pos{2}*(\slen,\slen)$)
    grid ($\pos{3}*(\slen,\slen)$)
    grid ($\pos{4}*(\slen,\slen)$)
    grid ($\pos{5}*(\slen,\slen)$)
    grid ($\pos{6}*(\slen,\slen)$)
    grid ($\pos{7}*(\slen,\slen)$);

    \node at ($1.4142*\pos{7}*(\slen,0)+(1em,0)$) {$\ldots$};
  \end{tikzpicture}
  \caption{Continuing the pattern, one obtains an infinite graph that
    is strongly shortcut because it is the $1$-skeleton of a
    finite-dimensional $\CAT(0)$ cube complex.  Subdividing the
    \emph{interior} edges of each $n \times n$ grid results in a
    quasi-isometric graph that is not strongly shortcut.}
  \figlabel{gridseq}
\end{figure}

In light of \Corref{rough_approx_inv}, we should point out that the
strong shortcut property is not a quasi-isometry invariant.  The
$1$-skeleton of an $n \times n$ grid of squares is strongly shortcut
but subdividing its \emph{interior} edges causes its boundary cycle to
become isometrically embedded.  We can construct a strongly shortcut
graph $\Gamma$ that contains isometric copies of $1$-skeletons of
larger and larger $n \times n$ grids.  See \Figref{gridseq}.
Subdividing the interior edges of each $n \times n$ grid of $\Gamma$
does not change the quasi-isometry type but results in a graph that is
not strongly shortcut because it contains arbitrarily long
isometrically embedded cycles.

\section{The Circle Tightening Lemma}
\seclabel{circle_tightening}

The Circle Tightening Lemma describes how one may perform surgery on
an almost isometric $R$-circle to obtain a quasi-isometrically
embedded $R$-circle, assuming the various constants are chosen
appropriately.  A version of this lemma first appeared implicitly in
the proof of a proposition in an earlier work of this author
\cite[Proposition~3.5]{Hoda:shortcut_graphs:2022} where it applied
only to graphs.  Here we state and prove a generalization to (rough)
geodesic metric spaces.

\subsection{Tightening sequence for a Riemannian circle}
\sseclabel{tightening_riemannian_circle}

\begin{figure}[ht]
  \newcommand{\circrad}{1.2cm}
  \newcommand{\seglen}{2cm}

  \centering
  
  \[ \begin{tikzcd}
      &
      \begin{tikzpicture}
        \draw (0,0) -- (\seglen,0);
        \node at (\seglen/2,-3ex) {$P_i$};
        \node[vertex] at (0, 0) {};
        \node[vertex] at (\seglen, 0) {};
      \end{tikzpicture}
      \ar[dl,hook',start anchor={south west}]
      \ar[dr,start anchor={south east}]
      & \\
      \begin{tikzpicture}
        \draw (10:\circrad) arc[start angle=10, end angle=300, radius=\circrad];
        \draw[train] (300:\circrad) arc[start angle=300, end angle=370, radius=\circrad];
        \node at (0,0) {$S_i$};
        \node at (155:\circrad + 1em) {$P_i$};
        \node at (335:\circrad + 1em) {$Q_i$};
        \node[vertex] at (10:\circrad) {};
        \node[vertex] at (300:\circrad) {};
      \end{tikzpicture}
      & &
      \begin{tikzpicture}
        \draw (10:\circrad) arc[start angle=10, end angle=300, radius=\circrad];
        \draw[red] (300:\circrad) -- (10:\circrad);
        \node at (0,0) {$S_{i+1}$};
        \node at (155:\circrad + 1.5em) {$\im(P_i)$};
        \node at (335:\circrad + 0.5em) {$\bar Q_i$};
        \node[vertex] at (10:\circrad) {};
        \node[vertex] at (300:\circrad) {};
      \end{tikzpicture}
      \\
      &
      \begin{tikzpicture}
        \draw (0,0) -- (\seglen,0);
        \node at (\seglen/2,2ex) {$\pc{i}$};
        \node[overtex] at (0, 0) {};
        \node[overtex] at (\seglen, 0) {};
      \end{tikzpicture}
      \ar[ul,hook',start anchor={north west}]
      \ar[uu,hook]
      \ar[ur,hook,start anchor={north east}]
      &
  \end{tikzcd} \]
  
\caption{In a circle tightening sequence, the circle $S_{i+1}$ is
  either equal to $S_i$ or is obtained from $S_i$ by replacing some
  geodesic segment $Q_i$ of $S_i$ with a shorter segment $\bar Q_i$,
  possibly of zero length.}
  \figlabel{circle_tightening}
\end{figure}

Let $S$ be a Riemannian circle.  A \defterm{tightening sequence} for
$S$ is a sequence of intervals and Riemannian circles $(P_i)_i$, a
sequence of Riemannian circles $(S_i)_i$ and sequences of maps
\[ \begin{tikzcd}
    S = S_0  & P_0 \ar[l,hook'] \ar[r] & S_1  & P_1 \ar[l,hook'] \ar[r] & S_2 & P_2 \ar[l,hook'] \ar[r] & \cdots
  \end{tikzcd} \] such that, for each $i$, either
\begin{enumerate}
\item
  \begin{enumerate}
  \item $P_i \hookrightarrow S_i$ and $P_i \to S_{i+1}$ are continuous
    paths of unit speed, and
  \item $P_i \to S_{i+1}$ is injective on the interior $\pc{i}$ of
    $P_i$, and
  \item $|P_i| \ge \frac{|S|}{2}$, and
  \item $|S_{i+1}| < |S_i|$; or
  \end{enumerate}
\item
  \begin{enumerate}
  \item $\pc{i} = P_i = S_i = S_{i+1}$, and
  \item $P_i \hookrightarrow S_i$ and $P_i \to S_{i+1}$ are identity
    maps.
  \end{enumerate}
\end{enumerate}
See \Figref{circle_tightening}.  Then, for each $i$, the circle
$S_{i+1}$ is obtained from $S_i$ by replacing
$Q_i = S \setminus \pc{i}$ with $\bar Q_i$, where either
$Q_i = \bar Q_i = \emptyset$ or $Q_i$ and $\bar Q_i$ are intervals
with $|\bar Q_i| < |Q_i|$.  So we also have a commutative diagram of
$1$-Lipschitz maps
\[ \begin{tikzcd}
    Q_0 \ar[r,two heads] \ar[dd,hook] & \bar Q_0 \ar[ddr,bend left=15,hook] & Q_1 \ar[r,two heads] \ar[dd,hook] & \bar Q_1 \ar[ddr,bend left=15,hook] & Q_2 \ar[r,two heads] \ar[dd,hook] & \bar Q_2 \ar[ddr,bend left=15,hook] & \cdots \\
    \\
    S=S_0 \ar[rr,bend left=45,two heads,"\pi_0"] & P_0 \ar[l,hook'] \ar[r] & S_1 \ar[rr,bend left=45,two heads,"\pi_1"] & P_1 \ar[l,hook'] \ar[r] & S_2  \ar[rr,bend left=45,two heads,"\pi_2"] & P_2 \ar[l,hook'] \ar[r] & \cdots
  \end{tikzcd} \] where each $Q_i \twoheadrightarrow \bar Q_i$ is
affine.  We call the $Q_i$ the \defterm{tightened segments} of the
tightening sequence.  We let $\pi^{(i)}$ denote the composition
$\pi_{i-1} \comp \pi_{i-2} \comp \cdots \comp \pi_0$.  A tightening
sequence is \defterm{eventually constant} if $S_i = S_{i+1}$ for all
large enough $i$.

Let $\pc{0,j}$ be the limit of the diagram
\[ \begin{tikzcd}
    & \pc{0} \ar[d,hook] \ar[dl,hook'] \ar[dr,hook] & & \pc{1} \ar[d,hook] \ar[dl,hook'] \ar[dr,hook] & \cdots & \pc{j-1} \ar[d,hook] \ar[dl,hook'] \ar[dr,hook] \\
    S=S_0 & P_0 \ar[l,hook'] \ar[r] & S_1 & P_1 \ar[l,hook'] \ar[r] & \cdots & P_{j-1} \ar[l,hook'] \ar[r] & S_j
  \end{tikzcd} \]
in the category of topological spaces and continuous maps.
Concretely, we have $\pc{0,0} = S_0$ and $\pc{0,1} = \pc{0}$ and
$\pc{0,j} = \pc{0,j-1} \cap \pc{j-1}$ where the intersection is
taken in $S_{j-1}$.  Thus we have the following commutative
diagram.
\[ \begin{tikzcd}
    & \pc{0,1} \ar[d,equal] && \pc{0,2} \ar[d,hook] \ar[ll,hook'] && \pc{0,3} \ar[d,hook] \ar[ll,hook'] & \cdots \ar[l,hook'] \\
    & \pc{0} \ar[d,hook] \ar[dl,hook'] \ar[dr,hook] & & \pc{1} \ar[d,hook] \ar[dl,hook'] \ar[dr,hook] & & \pc{2} \ar[d,hook] \ar[dl,hook'] \ar[dr,hook] & \cdots \\
    S=S_0 & P_0 \ar[l,hook'] \ar[r] & S_1 & P_1 \ar[l,hook'] \ar[r] & S_2 & P_2 \ar[l,hook'] \ar[r] & \cdots
  \end{tikzcd} \] We can think of $\pc{0,j}$ as the original points of
$S$ that are not replaced until at least step $j$ of the
``construction'' of the $S_i$, where the $j$th step of the
construction refers to the operation of replacing $Q_j$ with
$\bar Q_j$ in order to obtain $S_{j+1}$ from $S_j$.

\begin{figure}[ht]
  \newcommand{\circrad}{2cm}

  \newcommand{\arc}[2]{
    \draw (#1:\circrad) arc[start angle=#1, end angle=#2, radius=\circrad]}
  \newcommand{\replarc}[4]{
    \draw[train] (#1:\circrad) arc[start angle=#1, end angle=#2, radius=\circrad];
    \draw (#1:\circrad) -- (#2:\circrad)
    coordinate[midway](m);
    \node at (#1/2+#2/2:\circrad+1ex+1em) {#3};
    \path (m) +(180+#1/2+#2/2:1em) coordinate (M);
    \node at (M) {#4}}

  \centering
  \begin{tikzpicture}
    \replarc{-75}{-20}{$Q_1$}{};
    \path (-75:\circrad) -- (-20:\circrad) coordinate[midway] (A);
    \arc{-20}{-5};
    \replarc{-5}{55}{$Q_0$}{};
    \path (-5:\circrad) -- (55:\circrad) coordinate[midway] (B);
    \arc{55}{65};

    \draw(A) -- (B) coordinate[midway] (C);
    \path (C) +(-1em,0) coordinate (L);
    \node at (L) {$\bar Q_4$};
    \draw[cyan,train] (A) -- (-20:\circrad) arc[start angle=-20, end angle=-5, radius=\circrad] -- (B);
    
    \replarc{65}{95}{$Q_2$}{$\bar Q_2$};
    \arc{95}{110};
    \replarc{110}{135}{$Q_5$}{$\bar Q_5$};
    \arc{135}{175};
    \replarc{175}{205}{$Q_3$}{$\bar Q_3$};
    \arc{205}{245};
    \replarc{245}{275}{$Q_6$}{$\bar Q_6$};
    \arc{275}{285};
  \end{tikzpicture}
  
  \caption{A circle tightening sequence that is disjoint up to $4$ but
    not disjoint up to $5$.  The outer circle is the initial circle
    $S = S_0$.  For $i \ge 0$, the circle $S_{i+1}$ is obtained from $S_i$
    by replacing the geodesic segment $Q_i \subset S_{i}$ (indicated
    by perpendicular markings) with a shorter sequence $\bar Q_i$.  The
    segment $Q_4$ (drawn in cyan) is the first replaced segment that
    cannot be viewed as a subspace of $S$ since it is not contained
    in $\pc{0,4}$, which can be viewed as
    $S \setminus \bigcup_{i=0}^3 Q_i$.}
  \figlabel{circle_tightening_disjoint}
\end{figure}

We say that a tightening sequence is \defterm{disjoint up to $j$} if
$Q_i \subset \pc{0,i}$ for all $i < j$, where $\pc{0,i}$ is viewed
as a subspace of $S_i$ via the embedding
$\pc{0,i} \hookrightarrow S_i$.  See
\Figref{circle_tightening_disjoint}.  We say that a tightening
sequence is \defterm{completely disjoint} if it is disjoint up to
$j$ for every $j$.

If a tightening sequence is disjoint up to $j$, for $i<j$, we have
$Q_i \sqcup \pc{0,i+1} = \pc{0,i} \hookrightarrow S$.  So, for
$i < j$, we may think of the $Q_i$ as disjoint subspaces of $S$ with
$S \setminus \bigcup_{i=0}^{j-1}Q_i = \pc{0,j}$ in $S$.  Since $S_j$
is obtained from $S$ by replacing $Q_i$ with $\bar Q_i$, for each
$i < j$, we see then that the $\bar Q_i$, with $i < j$, embed
disjointly in $S_j$ with
$\pc{0,j} = S_j \setminus \bigcup_{i=0}^{j-1}\bar Q_i$ in $S_j$.

If a tightening sequence is completely disjoint then the $Q_i$ all
embed disjointly in $S$ and the complement of their union in $S$ is
$\pc{0,\infty} = \bigcap_{j=1}^{\infty} \pc{0,j}$.

\begin{lem}
  \lemlabel{lim_riemannian_circle} Consider a tightening sequence for
  a Riemannian circle $S$ with the same notation as above.  If the
  tightening sequence is completely disjoint and the sum
  $\sum_{i=0}^{\infty} |Q_i|$ of the tightened segment lengths is
  strictly less than $|S|$ then
  \begin{align*}
    \delta \colon S \times S &\to \R_{\ge 0} \\
    (x,y) &\mapsto \lim_{i \to \infty} d_{S_i}\bigl(\pi^{(i)}(x),
    \pi^{(i)}(y) \bigr)
  \end{align*} defines a pseudometric on $S$ such that
  the induced metric quotient $S_{\infty}$ of $(S,\delta)$,
  called the \defterm{limit Riemannian circle} of the tightening
  sequence, is a Riemannian circle of length
  $\lim_{i \to \infty} |S_i|$.
\end{lem}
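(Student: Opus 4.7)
My plan is to (i) show the limit defining $\delta$ exists and is a pseudometric, and then (ii) construct an explicit isometry from the metric quotient $S_{\infty}$ onto the Riemannian circle $T$ of length $L := \lim_i |S_i|$.

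For (i), each map $\pi_i \colon S_i \to S_{i+1}$ is $1$-Lipschitz: it is the identity on $\pc{i}$ and the affine contraction $Q_i \twoheadrightarrow \bar Q_i$ has Lipschitz constant $|\bar Q_i|/|Q_i| \le 1$. Hence the sequence $\bigl(d_{S_i}(\pi^{(i)}(x), \pi^{(i)}(y))\bigr)_i$ is non-increasing and bounded below by $0$, so the limit defining $\delta$ exists. The symmetry and triangle inequality for each $d_{S_i}$ pass to the limit, giving a pseudometric on $S$. Moreover, $|S_{i+1}| = |S_i| - (|Q_i| - |\bar Q_i|)$, which iterates to $|S_i| = |S| - \sum_{j < i}(|Q_j| - |\bar Q_j|)$ and decreases monotonically to $L = |S| - \sum_j(|Q_j| - |\bar Q_j|)$; the hypothesis $\sum_j |Q_j| < |S|$ ensures $L > 0$.

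For (ii), since the tightening sequence is completely disjoint, the $Q_j$ embed as pairwise disjoint closed arcs in $S$. I will define a Borel measure $\mu$ on $S$ that agrees with arc length on the complement of $\bigsqcup_j Q_j$ and equals arc length rescaled by the factor $|\bar Q_j|/|Q_j|$ on each $Q_j$; a direct computation gives $\mu(S) = L$. Fix $x_0 \in S$ and define $\phi \colon S \to T = \R/L\Z$ by $\phi(x) = \mu([x_0, x)) \bmod L$, where $[x_0, x)$ denotes an arc from $x_0$ to $x$ traversed in a fixed direction. Since $\mu$ has no atoms, $\phi$ is continuous, and since $\mu(S) = L$, it is surjective. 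Moreover, $d_T(\phi(x), \phi(y)) = \min(\mu(A_1), \mu(A_2))$ for the two arcs $A_1, A_2$ of $S$ from $x$ to $y$.

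Finally, to identify $\delta$ with $\phi^* d_T$, I will show that the length in $S_i$ of the image $\pi^{(i)}(A_k)$ equals $|A_k| - \sum_{j < i,\, Q_j \subset A_k}(|Q_j| - |\bar Q_j|)$, plus a partial-arc correction when $x$ or $y$ lies inside some $Q_j$, and that this quantity decreases monotonically to $\mu(A_k)$. Since each $S_i$ is a Riemannian circle, $d_{S_i}(\pi^{(i)}(x), \pi^{(i)}(y))$ is the minimum of the two image-arc lengths, so taking the minimum over $k = 1,2$ and passing to the limit yields $\delta(x,y) = \min(\mu(A_1), \mu(A_2)) = d_T(\phi(x), \phi(y))$. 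Consequently $\phi$ descends to an isometric bijection $S_{\infty} \to T$, which identifies $S_{\infty}$ with the Riemannian circle of length $L$. The main technical obstacle is the bookkeeping when an endpoint lies strictly inside some $Q_j$ so that only part of $Q_j$ is contained in an arc $A_k$; I expect to handle this either by direct case analysis using the affineness of each $\pi_j|_{Q_j}$, or by first establishing the key equality for $x, y \in S \setminus \bigsqcup_j Q_j$ and extending by the joint $1$-Lipschitz continuity in $(x,y)$ of both sides.
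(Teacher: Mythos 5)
Your proposal is correct, but it takes a genuinely different route from the paper's. The paper lifts each $\pi_i$ to a $1$-Lipschitz, order-preserving map $\tilde\pi_i \colon \R \to \R$ between universal covers, notes that the compositions $\tilde\pi^{(i)}$ converge pointwise and monotonically to a $1$-Lipschitz, order-preserving limit satisfying $\tilde\pi^{(\infty)}\bigl(r+|S|\bigr)=\tilde\pi^{(\infty)}(r)+\lim_i|S_i|$, descends this to a surjection $\pi^{(\infty)}\colon S\to\R/\bigl(\lim_i|S_i|\bigr)\Z$, and then checks that the pulled-back pseudometric is $\delta$ by interchanging the limit with a minimum over a fixed finite set of deck translations. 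You instead build the limit object directly: a measure $\mu$ on $S$ rescaling arc length by $|\bar Q_j|/|Q_j|$ on each $Q_j$, the cumulative-measure map $\phi\colon S\to T$, and the identity $d_{S_i}\bigl(\pi^{(i)}(x),\pi^{(i)}(y)\bigr)=\min\bigl(\mu_i(A_1),\mu_i(A_2)\bigr)$, where $\mu_i$ rescales only the $Q_j$ with $j<i$, followed by a monotone passage to the limit. The two constructions are closely related (the paper's $\tilde\pi^{(\infty)}$ is essentially the cumulative function of your $\mu$), but your verification is done arc-by-arc inside each $S_i$, which makes the limit circle concretely a measure-reparametrized copy of $S$ at the cost of the bookkeeping you flag, whereas the paper's lift to $\R$ avoids the arc bookkeeping and isolates the only delicate point as a lim/min interchange; your route also leans on complete disjointness (to view the $Q_j$ as disjoint arcs of $S$ and $\pi^{(i)}$ as affine on each), which the paper's argument for this lemma does not actually need beyond the hypothesis $\sum_i|Q_i|<|S|$.

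Two details to nail down when you write this out. First, the identity $d_{S_i}=\min$ of the two image-arc lengths requires that $\pi^{(i)}$ is a monotone degree-one circle map (identity off $\bigcup_{j<i}Q_j$, orientation-preserving affine contraction on each $Q_j$), so that the images of $A_1$ and $A_2$ are exactly the two arcs of $S_i$ between $\pi^{(i)}(x)$ and $\pi^{(i)}(y)$, with lengths summing to $|S_i|$; this is routine but should be stated, including the degenerate case $\mu_i(A_k)\in\{0,|S_i|\}$. Second, your fallback of proving the equality only for $x,y\in S\setminus\bigcup_j Q_j$ and extending by $1$-Lipschitz continuity does not work verbatim: that complement need not be dense in $S$, since the interior of any nonempty $Q_j$ is an open set disjoint from it, so agreement there does not force agreement everywhere. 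Stick with your primary option, the direct case analysis via affineness of each $\pi_j|_{Q_j}$, which does go through.
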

\begin{proof}
  The $\pi^{(i)}$ are isomorphisms on fundamental group so, for each
  $i$, we have a commuting diagram \[
    \begin{tikzcd}
      \R \ar[d] \ar[r,"\tilde \pi_i"]  & \R \ar[d] \\
      S_i \ar[r, "\pi_i"] & S_{i+1}
    \end{tikzcd} \] where $\R \to S_i$ and $\R \to S_{i+1}$ are the
  quotient maps from $(\R,+)$ by the subgroups $|S_i|\Z$ and
  $|S_{i+1}|\Z$, respectively.  Then, since $\pi_i$ is $1$-Lipzchitz,
  so is $\tilde \pi_i$.  Without loss of generality, the map
  $\tilde \pi_i$ sends $0$ to $0$ and preserves order, in the sense
  that $s \le r$ implies $\tilde \pi_i(s) \le \tilde \pi_i(r)$.  Let
  $\tilde \pi^{(i)} \colon \R \to \R$ be the composition
  $\tilde \pi_0 \comp \tilde \pi_1 \comp \cdots \comp \tilde
  \pi_{i-1}$ so that the diagram \[ \begin{tikzcd}
      \R \ar[d] \ar[r,"\tilde \pi^{(i)}"]  & \R \ar[d] \\
      S \ar[r, "\pi^{(i)}"] & S_i
    \end{tikzcd} \] commutes and satisfies the same properties as the
  previous diagram.  Then for $r \in \R$, the sequence
  $\bigl(\pi^{(i)}(r)\bigr)_i$ is either nonnegative and nonincreasing
  or nonpositive and nondecreasing.  In either case the limit exists
  so we can define a limit function
  \begin{align*}
    \pi^{(\infty)} \colon \R &\to \R \\
    r &\mapsto  \lim_{i \to \infty} \pi^{(i)}(r)
  \end{align*}
  which is also $1$-Lipschitz, sends $0$ to $0$ and preserves order.
  
  By assumption $\sum_{i=1}^{\infty}|Q_i| < |S|$ so
  $\pi^{(i)}\bigl(|S|\bigr) = |S_i| \ge |S| - \sum_{i=1}^{\infty}|Q_i|
  > 0$ and so we have the following.
  \[\pi^{(\infty)}\bigl(|S|\bigr) = \lim_{i \to \infty}|S_i| > 0 \]
  For $r \in \R$, we have
  $\pi^{(i)}\bigl(r + |S|\bigr) = \pi^{(i)}(r) + |S_i|$ so
  \[ \pi^{(\infty)}\bigl(r + |S|\bigr) = \pi^{(\infty)}(r) +
    \lim_{i\to\infty}|S_i| \] which implies that if $\R \to \bar S$ is
  the quotient map of $(\R,+)$ with kernel
  $\bigl(\lim_{i \to \infty}|S_i|\bigr)\Z$ then the map
  \begin{align*}
    \pi^{(\infty)} \colon S &\to S_{\infty} \\
    r + |S|\Z &\mapsto \pi^{(\infty)}(r) + \bigl(\lim_{i \to \infty}|S_i|\bigr)\Z
  \end{align*}
  is well defined and makes the diagram \[
    \begin{tikzcd}
      \R \ar[d] \ar[r,"\tilde \pi^{(\infty)}"]  & \R \ar[d] \\
      S \ar[r, "\pi^{(\infty)}"] & \bar S
    \end{tikzcd} \]
  commute.

  Then $|\bar S| = \lim_{i \to \infty}|S_i|$ and, for $r+|S|\Z$ and
  $s+|S|\Z$ in $S$, we have
  \begin{align*}
    &d_{\bar S}\Bigl(\pi^{(\infty)}\bigl(r+|S|\Z\bigr),
    \pi^{(\infty)}\bigl(s+|S|\Z\bigr)\Bigr) \\
    &= d_{\bar S}\Bigl(\pi^{(\infty)}(r) + |\bar S|\Z,
      \pi^{(\infty)}(s) + |\bar S|\Z\Bigr) \\
    &= \min_{k \in \Z}\Bigl|\pi^{(\infty)}(r) -
      \pi^{(\infty)}(s) + |\bar S|k \Bigr| \\
    &= \min_{k \in I}\Bigl|\pi^{(\infty)}(r) -
      \pi^{(\infty)}(s) + |\bar S|k \Bigr| \\
    &= \min_{k \in I}\lim_{i \to \infty}\Bigl|\pi^{(i)}(r) -
      \pi^{(i)}(s) + |\bar S_i|k \Bigr| \\
    &= \lim_{i \to \infty}\min_{k \in I}\Bigl|\pi^{(i)}(r) -
      \pi^{(i)}(s) + |\bar S_i|k \Bigr| \\
    &= \lim_{i \to \infty}\min_{k \in \Z}\Bigl|\pi^{(i)}(r) -
      \pi^{(i)}(s) + |\bar S_i|k \Bigr| \\
    &= \lim_{i \to \infty}d_{S_i}\Bigl(\pi^{(i)}(r) + |S_i|\Z,
      \pi^{(i)}(s) + |S_i|\Z\Bigr) \\
    &= \lim_{i \to \infty}d_{S_i}\Bigl(\pi^{(i)}\bigl(r+|S|\Z\bigr),
      \pi^{(i)}\bigl(s+|S|\Z\bigr)\Bigr)
  \end{align*}
  where
  $I = \biggl\{k \in \Z \sth |k| \le \Bigl\lceil\frac{|r-s|}{|\bar
    S|}\Bigr\rceil\biggr\}$.  So the pseudometric on $S$ pulled back
  from $\pi^{(\infty)}$ is $\delta$. Then, since $\pi^{(\infty)}$ is
  surjective, this implies that $\bar S$ is $S_{\infty}$, the induced
  metric quotient of $(S,\delta)$ and $\pi^{\infty}$ is the quotient
  map.
\end{proof}

\begin{rmk}
  If a completely disjoint tightening sequence of a Riemannian circle
  is eventually constant then, for large enough $i$, the limit
  Riemannian circle $S_{\infty}$ is isometric to the $i$th Riemannian
  circle of the sequence $S_i$.
\end{rmk}

\subsection{Tightening sequence for an \texorpdfstring{$R$}{R}-circle}

Let $R \ge 0$ and let $X$ be an $R$-rough geodesic metric space. Let
$\alpha \colon S \to X$ be an $R$-circle.  A \defterm{tightening
  sequence} for $\alpha$ is a sequence of intervals and Riemannian
circles $(P_i)_i$, a sequence of Riemannian circles $(S_i)_i$ and
sequences of maps as in the commutative diagram
\[ \begin{tikzcd}
    S \ar[r,equal] \ar[drrr,bend right=15,"\alpha"'] & S_0 \ar[drr,bend right=15,"\alpha_0"] & P_0 \ar[l,hook'] \ar[r] & S_1 \ar[d,"\alpha_1"'] & P_1 \ar[l,hook'] \ar[r] & S_2 \ar[dll,bend left=15,"\alpha_2"'] & P_2 \ar[l,hook'] \ar[r] & \ar[dllll,bend left=10,"\alpha_3"'] \cdots \\
    & & & X & & & & \cdots
  \end{tikzcd} \] such that each $\alpha_i$ is an $R$-circle and the
sequence of maps
\[ \begin{tikzcd} S = S_0 & P_0 \ar[l,hook'] \ar[r] & S_1 & P_1
    \ar[l,hook'] \ar[r] & S_2 & P_2 \ar[l,hook'] \ar[r] & \cdots
  \end{tikzcd} \] is a tightening sequence for $S$.  Then, by the
discussion of \Ssecref{tightening_riemannian_circle}, we have a
diagram \[ \begin{tikzcd}
    & Q_0 \ar[r,two heads] \ar[dd,hook] & \bar Q_0 \ar[ddr,bend left=15,hook] & Q_1 \ar[r,two heads] \ar[dd,hook] & \bar Q_1 \ar[ddr,bend left=15,hook] & Q_2 \ar[r,two heads] \ar[dd,hook] & \bar Q_2 \ar[ddr,bend left=15,hook] & \cdots \\
    \\
    S \ar[r,equal] \ar[drrr,bend right=15,"\alpha"'] & S_0 \ar[rr,bend left=45,two heads,"\pi_0"] \ar[drr,bend right=15,"\alpha_0"] & P_0 \ar[l,hook'] \ar[r] & S_1 \ar[d,"\alpha_1"'] \ar[rr,bend left=45,two heads,"\pi_1"] & P_1 \ar[l,hook'] \ar[r] & S_2 \ar[dll,bend left=15,"\alpha_2"'] \ar[rr,bend left=45,two heads,"\pi_2"] & P_2 \ar[l,hook'] \ar[r] & \ar[dllll,bend left=10,"\alpha_3"'] \cdots \\
    & & & X & & & & \cdots
  \end{tikzcd} \] where the bounded planar regions are commuting
triangles and squares.

\begin{lem}
  \lemlabel{lim_r_circle} Consider a tightening sequence for an
  $R$-circle $\alpha \colon S \to X$ in an $R$-rough geodesic metric
  space $X$, with the same notation as above.  If the tightening
  sequence is completely disjoint and the sum
  $\sum_{i=0}^{\infty} |Q_i|$ of the tightened segment lengths is
  strictly less than $|S|$ then
  \begin{align*}
    \alpha_{\infty} \colon S_{\infty} &\to X \\
    x &\mapsto \lim_{i \to \infty} \alpha_i \comp \pi^{(i)} \comp \sigma(x)
  \end{align*}
  defines an $R$-circle, called a \defterm{limit $R$-circle} of the
  tightening sequence, where $S_{\infty}$ is the limit Riemannian
  circle of the tightening sequence and
  $\sigma \colon S_{\infty} \to S$ is a section of the quotient map
  $S \to S_{\infty}$.
\end{lem}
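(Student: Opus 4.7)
The plan is to establish two things: that the pointwise limit defining $\alpha_\infty$ actually exists on all of $S_\infty$, and that the resulting map is $R$-rough $1$-Lipschitz. Combined with \Lemref{lim_riemannian_circle}, which asserts that $S_\infty$ is a Riemannian circle, this will show $\alpha_\infty$ is an $R$-circle.

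To establish existence of the limit, I would argue that for every $x \in S_\infty$ the sequence $\bigl(\alpha_i \comp \pi^{(i)}\comp \sigma(x)\bigr)_i$ is \emph{eventually constant} in $X$. Using complete disjointness, the point $\sigma(x) \in S$ either belongs to $\pc{0,\infty} = S \setminus \bigsqcup_j Q_j$ or to a unique tightened segment $Q_j$. In the first case, $\pi^{(i)}\sigma(x)$ lies in $\pc{i}$ for every $i$, and the commutative triangles $\alpha_i \comp (P_i \hookrightarrow S_i) = \alpha_{i+1} \comp (P_i \to S_{i+1})$ in the defining diagram immediately force the sequence to be constant, equal to $\alpha(\sigma(x))$. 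In the second case, after step $j$ the point lands in $\bar Q_j \subset S_{j+1}$, and complete disjointness guarantees that $\bar Q_j \subset \pc{i}$ for every $i > j$, because the subsequent tightened segments $Q_i$ are drawn from $\pc{0,i} \subset \pc{0,j+1}$, which is disjoint from $\bar Q_j$ in $S_{j+1}$. Hence $\pi^{(i)}\sigma(x)$ stabilizes at $\pi^{(j+1)}\sigma(x)$ for all $i > j$ under the natural embeddings, and iterated triangle commutativity pins $\alpha_i\comp \pi^{(i)}\comp \sigma(x)$ to a common value for every $i \ge j+1$.

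For the rough Lipschitz bound, each $\alpha_i$ is an $R$-circle, so
\[ d_X\bigl(\alpha_i\pi^{(i)}\sigma(x),\, \alpha_i\pi^{(i)}\sigma(y)\bigr) \le d_{S_i}\bigl(\pi^{(i)}\sigma(x),\, \pi^{(i)}\sigma(y)\bigr) + R \]
for all $x,y \in S_\infty$ and all $i$. Taking $i \to \infty$, the left side converges to $d_X\bigl(\alpha_\infty(x), \alpha_\infty(y)\bigr)$ by the eventual constancy established above, while the right side converges to $\delta(\sigma(x), \sigma(y)) + R = d_{S_\infty}(x,y) + R$ by the description of the limit metric in \Lemref{lim_riemannian_circle} (together with the fact that $\sigma$ is a section). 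Thus $\alpha_\infty \colon S_\infty \to X$ is $R$-rough $1$-Lipschitz, and since $S_\infty$ is a Riemannian circle, it is an $R$-circle as claimed.

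The main obstacle is the combinatorial bookkeeping needed to show eventual constancy: one must carefully track, using complete disjointness, that once a point has been pushed forward past the stage at which its fibre is tightened, no later step touches its image. Once that is in hand, the rough Lipschitz estimate is an immediate passage to the limit of the per-stage bounds on $\alpha_i$, with no further input required.
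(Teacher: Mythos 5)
Your proposal is correct and takes essentially the same route as the paper: the paper also proves existence of the limit by the same eventual-constancy case analysis (points of $S$ outside all the $Q_i$ give a constant sequence, points in some $Q_j$ give a sequence whose tail from index $j+1$ is constant), and then obtains the $R$-rough $1$-Lipschitz bound by passing to the limit in the per-stage inequalities $d_X\bigl(\alpha_i\comp\pi^{(i)}(x),\alpha_i\comp\pi^{(i)}(y)\bigr) \le d_{S_i}\bigl(\pi^{(i)}(x),\pi^{(i)}(y)\bigr)+R$ and composing with the section $\sigma$, using $\delta(\sigma(x),\sigma(y)) = d_{S_\infty}(x,y)$ from \Lemref{lim_riemannian_circle}.
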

\begin{proof}
  Since the tightening sequence is completely disjoint, we may think
  of the $Q_i$ as a collection of disjoint segments in $S$.  For
  $x \in S$ either $x \notin \bigcup_{i=0}^{\infty} Q_i$ and
  $\bigl(\alpha_i \comp \pi^{(i)}(x)\bigr)_{i=0}^{\infty}$ is a
  constant sequence or $x \in Q_j$ for some $j$ and the tail sequence
  $\bigl(\alpha_i \comp \pi^{(i)}(x)\bigr)_{i=j+1}^{\infty}$ is
  constant.  In either case, the limit
  $\lim_{i \to \infty} \alpha_i \comp \pi^{(i)}(x)$ exists so we have
  a function $\alpha_{\infty}' \colon S \to X$ given by
  $\alpha_{\infty}'(x) = \lim_{i \to \infty} \alpha_i \comp
  \pi^{(i)}(x)$.

  Let $S_{\infty}$ be the limit Riemannian circle given by
  \Lemref{lim_riemannian_circle}.  So $S_{\infty}$ is the induced
  metric quotient of $(S,\delta)$ where $\delta$ is the pseudometric
  give by
  $\delta(x,y) = \lim_{i \to \infty} d_{S_i}\bigl(\pi^{(i)}(x),
  \pi^{(i)}(y) \bigr)$.  Since each $\alpha_i$ is an $R$-circle, for
  $x,y \in S$,
  \[ d_X\bigl(\alpha_i\comp \pi^{(i)}(x), \alpha_i\comp
    \pi^{(i)}(y)\bigr) \le
    d_{S_i}\bigl(\pi^{(i)}(x),\pi^{(i)}(y)\bigr) + R \] for all $i$,
  and thus
  \[ d_X\bigl(\alpha_{\infty}'(x), \alpha_{\infty}'(y)\bigr) \le
    \delta(x,y) + R \] by taking limits as $i \to \infty$.

  Then, for $x,y \in S_{\infty}$
  \begin{align*}
    d_X\bigl(\alpha_{\infty}(x), \alpha_{\infty}(y)\bigr)
    &= d_X\Bigl(\alpha_{\infty}'\bigl(\sigma(x)\bigr),
      \alpha_{\infty}'\bigl(\sigma(y)\bigr)\Bigr) \\
    &\le \delta\bigl(\sigma(x), \sigma(y)\bigr) + R \\
    &= d_{S_{\infty}}(x, y) + R
  \end{align*}
  which completes the proof.
\end{proof}

\begin{rmk}
  If a completely disjoint tightening sequence for an $R$-circle
  $\alpha \colon S \to X$ is eventually constant then, for large
  enough $i$, the limit $R$-circle
  $\alpha_{\infty} \colon S_{\infty} \to X$ is isometric over $X$ to
  the $i$th $R$-circle of the sequence $\alpha_i$.  This means that
  there is an isometry $S_{\infty} \to S$ such that diagram
  \[ \begin{tikzcd}
      S_{\infty} \ar[r] \ar[dr,"\alpha_{\infty}"'] & S_i \ar[d,"\alpha_i"] \\
      & X
    \end{tikzcd} \]
  commutes.
\end{rmk}

We are ready now to state the Circle Tightening Lemma.

\begin{lem}[Circle Tightening Lemma]
  \lemlabel{circle_tightening} Let $N > 1$, let $L > 1$, let $K > 1$
  be small enough (depending on $N$ and $L$), let $R \ge 0$, let
  $M > 0$ be large enough (depending on $N$, $L$, $K$ and $R$) and let
  $C \ge 4R$.

  Let $\alpha \colon S \to X$ be an $R$-circle in an $R$-rough
  geodesic metric space.  If $\alpha$ is $\frac{1}{K}$-almost
  isometric and $|S| > M$ then $\alpha$ has a completely disjoint
  tightening sequence such that the total length
  $\sum_{i=0}^{\infty} |Q_i|$ of the tightened segments is at most
  $\frac{|S|}{N}$ and the limit $R$-circle $\alpha_{\infty}$ is an
  $(L,C)$-quasi-isometric embedding.  If, additionally, we have
  $C > 0$ then such a tightening sequence exists that is eventually
  constant.
\end{lem}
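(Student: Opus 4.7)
The plan is to build the tightening sequence iteratively and then take limits via \Lemref{lim_riemannian_circle} and \Lemref{lim_r_circle}. At step $i$, given $\alpha_i\colon S_i\to X$, call $(p,q)\in S_i$ \emph{bad} if $d_X\bigl(\alpha_i(p),\alpha_i(q)\bigr) < d_{S_i}(p,q)/L - C$. If there is no bad pair whose shorter arc lies in $\pc{0,i}$, take a constant step $S_{i+1}=S_i$; otherwise pick such a bad pair and let $Q_i$ be its shorter arc, of length $|Q_i|=d_{S_i}(p,q)\le|S_i|/2\le|S|/2$. Replace $Q_i$ by an $R$-rough geodesic in $X$ from $\alpha_i(p)$ to $\alpha_i(q)$, prepended by a constant stub of length $R$ to keep the concatenation $R$-rough $1$-Lipschitz (\Rmkref{rpath_concatenation}). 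Since $Q_i\subset\pc{0,i}$ embeds isometrically into $S$, the sequence is completely disjoint, and the endpoints of $Q_i$ pull back to points $p^*,q^*\in S$ satisfying $\alpha(p^*)=\alpha_i(p)$, $\alpha(q^*)=\alpha_i(q)$, and (using $|Q_i|\le|S|/2$) $d_S(p^*,q^*)=|Q_i|$.

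The central estimate applies \Lemref{global_to_local} to $(p^*,q^*)$ and combines with the bad pair inequality to yield
\[ |Q_i|\bigl(1-1/L\bigr) < \tfrac{K-1}{2K}|S| + 2R - C. \]
Using $C\ge 4R$ gives $|Q_i| \le \tfrac{L(K-1)}{2K(L-1)}|S|$, so choosing $K>1$ small enough (depending on $N$ and $L$) makes each $|Q_i|$ an arbitrarily tiny fraction of $|S|$; this also verifies the requirement $|P_i| \ge |S|/2$ at every step, since $|S_i|$ stays close to $|S|$ throughout. Meanwhile, the replacement length $|\bar Q_i|=d_X\bigl(\alpha_i(p),\alpha_i(q)\bigr)+R$ satisfies $|Q_i|-|\bar Q_i|>(L-1)d_X+LC-R$, which is strictly positive under $C\ge 4R$ (bounded below by $3R$ when $R>0$, and by $LC$ when $R=0$ and $C>0$). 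A careful accounting---using the individual bound on $|Q_i|$ together with the hypothesis $|S|>M$ to absorb the additive terms $2R$ and $C$---delivers $\sum_i|Q_i|<|S|/N$.

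With the tightening sequence and length bound in hand, \Lemref{lim_riemannian_circle} produces a limit Riemannian circle $S_\infty$ with $|S_\infty|\ge|S|(1-1/N)>0$, and \Lemref{lim_r_circle} produces the limit $R$-circle $\alpha_\infty$. The upper quasi-isometry bound is automatic from $R$-rough $1$-Lipschitzness together with $C\ge R$. The lower bound holds because the procedure was run to exhaustion: a hypothetical bad pair in $\alpha_\infty$ would pull back, using $d_{S_\infty}\le d_{S_i}$ on preserved points, to a bad pair in some $\alpha_i$ that should have been tightened, a contradiction. When $C>0$, the uniform lower bound on $|Q_i|-|\bar Q_i|$ shows only finitely many tightenings can occur, so the sequence is eventually constant.

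The main obstacle is the aggregate bound $\sum_i|Q_i|<|S|/N$: disjointness alone only gives $\sum_i|Q_i|\le|S|$. Extracting the stronger bound relies on pushing each $|Q_i|$ below a small fraction of $|S|$ via the individual estimate, together with a greedy selection rule (for instance, choosing bad pairs in decreasing order of $d_{S_i}(p,q)$) and the crucial observation that tightening can only decrease $S_i$-distances, so no new bad pairs can appear between previously preserved points. This is where the delicate quantifier chain---$K$ depending on $N$ and $L$, and $M$ depending on $N, L, K, R$---comes into play, and where the almost-isometric hypothesis is exploited most strongly.
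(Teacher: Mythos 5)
Your proposal identifies the right building blocks (a greedy tightening iteration, the individual length estimate from \Lemref{global_to_local}, the limit constructions of \Lemref{lim_riemannian_circle} and \Lemref{lim_r_circle}, and a termination argument when $C>0$) and matches the paper on several of them, including the per-step bound $|Q_i|\le\frac{K-1}{K}\cdot\frac{L}{L-1}\cdot\frac{|S|}{2}$. However there are two substantive gaps where the proposal diverges from what is actually needed.

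The first, and more serious, is the disjointness/termination interplay. You build complete disjointness into the \emph{definition} of the iteration by only tightening bad pairs whose geodesic arc lies entirely inside $\pc{0,i}$, and you stop when no such pair remains. But that stopping condition is too weak to give the conclusion: there could remain a bad pair in $S_\infty$ whose geodesic arc passes through a replaced segment $\bar Q_j$, and your iteration will never touch it. Your justification---``a hypothetical bad pair in $\alpha_\infty$ would pull back \ldots to a bad pair in some $\alpha_i$ that should have been tightened''---only applies to bad pairs whose arcs stay in $\pc{0,i}$; you have not argued that an arbitrary bad pair in the limit has this form. The paper does not impose the $\pc{0,i}$ restriction in the definition; instead it picks the (approximately) longest bad pair at each step and then \emph{proves}, via \Claimref{greedy_tightening_almost_disjoint} and \Claimref{greedy_tightening_disjoint}, that the greedy choice automatically has its geodesic arc inside $\pc{0,j}$. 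This is the technical heart of the lemma (it uses the almost-isometry hypothesis, \Claimref{onebound}, and \Claimref{sumbound} in an intricate comparison argument), and your proposal effectively elides it.

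The second gap is the aggregate estimate $\sum_i|Q_i|<|S|/N$. You acknowledge this is the ``main obstacle'' but the proposed resolution---``using the individual bound on $|Q_i|$ together with the hypothesis $|S|>M$ to absorb the additive terms''---is not sufficient: the individual bound says each $|Q_i|$ is a small fraction of $|S|$, but there could be arbitrarily many tightened segments, so this alone gives no control of the sum. The paper's \Claimref{sumbound} requires a separate argument: it selects two points $p,q$ in $\pc{0,j}$ at nearly antipodal distance (which uses the individual bound of \Claimref{onebound}), upper-bounds $d_X(\alpha(p),\alpha(q))$ in terms of $|S|-\frac{L-1}{L}\sum_i|Q_i|$ by tracking what happens to path lengths under replacement, and then applies \Lemref{global_to_local} and the almost-isometric hypothesis a second time to close the estimate. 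This step is genuinely missing from the proposal. A minor point: the replacement $\bar Q_i$ must carry a constant stub of length $R$ at \emph{both} endpoints (not just one) for the concatenation to remain an $R$-rough $1$-Lipschitz map per \Rmkref{rpath_concatenation}, so $|\bar Q_i|=d_X\bigl(\alpha_i(p),\alpha_i(q)\bigr)+2R$.
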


\Lemref{circle_tightening} is a consequence of claims
\claimref{greedy_tightening_disjoint}, \claimref{greedy_bounds},
\claimref{greedy_tightening_qi} and
\claimref{positive_c_eventually_constant} and the strict inequalities
of \Claimref{rational_functions} below but to understand these claims
we need to first define greedy tightening sequences and prove some
properties about them.

\subsection{Greedy tightening sequences}
\sseclabel{greedy_tightening}

In order to prove the \Lemref{circle_tightening} we will need to
describe a tightening sequence that is constructed inductively by
greedily choosing segments to tighten.  Let $\alpha \colon S \to X$ be
an $R$-circle in an $R$-rough geodesic metric space $X$, with
$R \ge 0$.  Let $C \ge 4R$ and let $L > 1$.  We will inductively
define a tightening sequence for $\alpha$ with the same notation as in
the previous sections.  Suppose we have $\alpha_i \colon S_i \to X$.
If $\alpha_i$ is an $(L,C)$-quasi-isometric embedding then we extend
the sequence as follows.
\begin{enumerate}
\item We set $\pc{i} = P_i = S_i = S_{i+1}$ and
  $Q_i = \bar Q_i = \emptyset$.
\item We let $P_i \hookrightarrow S_i$ and $P_i \to S_{i+1}$ be
  identity maps.
\item We let $\alpha_{i+1} = \alpha_i$.
\end{enumerate}
Otherwise, the set
\[ J_i = \Bigl\{(p,q) \in S_i \times S_i \sth
  d_X\bigl(\alpha_i(p),\alpha_i(q)\bigr) < \frac{1}{L} d_{S_i}(p,q) -
  C \Bigr\} \] is nonempty and $d_{S_i}(p,q) > LC \ge 0$ for any
$(p,q) \in J_i$ and so
$s_i = \sup \bigl\{d_{S_i}(p,q) \sth (p,q) \in J_i \bigr\} > 0$.  By
compactness of $S$, there is a sequence $(p_i^{(n)},q_i^{(n)})_n$ in
$J_i$ that converges to some $(p_i,q_i) \in S \times S$ with
$d_{S_i}(p_i,q_i) = s_i$ as $n \to \infty$.  Then
\begin{align*}
  &d_X\bigl(\alpha_i(p_i), \alpha(q_i)\bigr) \\
  &\le d_X\bigl(\alpha_i(p_i), \alpha(p_i^{(n)})\bigr)
    + d_X\bigl(\alpha_i(p_i^{(n)}), \alpha(q_i^{(n)})\bigr)
    + d_X\bigl(\alpha_i(q_i^{(n)}), \alpha(q_i)\bigr) \\
  &< d_{S_i}(p_i, p_i^{(n)}) + R
    + \frac{1}{L}d_{S_i}(p_i^{(n)}, q_i^{(n)}) - C
    + d_{S_i}(q_i^{(n)}, q_i) + R \\
  &\to \frac{1}{L}d_{S_i}(p_i, q_i) - C + 2R
\end{align*}
as $n \to \infty$.  So
\begin{equation}
  \eqnlabel{alph_qi_alph_pi_distance}
  d_X\bigl(\alpha_i(p_i), \alpha(q_i)\bigr)
  \le \frac{1}{L}d_{S_i}(p_i, q_i) - C + 2R
\end{equation}
and
\begin{equation}
  \eqnlabel{qi_size}
  \text{$d_{S_i}(p_i,q_i) > 0$ and $d_{S_i}(p_i,q_i) \ge L(C-2R)$}
\end{equation}
hold.  Let $Q_i$ be a geodesic segment of $S_i$ between $p_i$ and
$q_i$.  In the case where $p_i$ and $q_i$ are antipodal in $S_i$,
there are two geodesic segments between $p_i$ and $q_i$; in this case
we let $Q_i$ be the geodesic segment whose intersection with
$\pc{0,i}$ has greatest total length.  Let $\pc{i}$ be the complement
of $Q_i$ and let $P_i$ be the closure of $\pc{i}$.  Let
$\gamma'_i \colon \bar Q'_i \to X$ be an $R$-rough geodesic from
$\alpha_i(p_i)$ to $\alpha_i(q_i)$.  For $x \in X$, let
$c_x\colon [0,R] \to X$ denote the constant path of length $R$ at $x$.
Let $\gamma_i \colon \bar Q_i \to X$ be the concatenation
$c_{\alpha_i(p_i)}\gamma'_ic_{\alpha_i(q_i)}$.  We have
\[ 0 \le |\bar Q_i| - 2R = d_X\bigl(\alpha_i(p_i),\alpha_i(q_i)\bigr)
  \le \frac{1}{L} |Q_i| - C + 2R \] and so, since $C \ge 4R$,
\begin{equation}
  \eqnlabel{qi_scale}
  |\bar Q_i| \le \frac{1}{L} |Q_i|
\end{equation}
holds.  We obtain $\alpha_{i+1} \colon S_{i+1} \to X$ from
$\alpha_i|_{P_i}$ and $\gamma_i$ by identifying the corresponding
endpoints of $\bar Q_i$ with $p_i$ and $q_i$ in $P_i$.  Then, by
consideration of \Rmkref{rpath_concatenation}, the map $\alpha_{i+1}$
is an $R$-circle.
\begin{rmk}
  \rmklabel{endpoint_distances}
  The inequalities
  \[ d_X\bigl(\alpha_{i+1}(p_i), \alpha_{i+1}(x)\bigr) \le
    d_{S_{i+1}}(p_i,x) \] and
  \[ d_X\bigl(\alpha_{i+1}(q_i), \alpha_{i+1}(x)\bigr) \le
    d_{S_{i+1}}(q_i,x) \] hold for any $x \in \bar Q_i$.
\end{rmk}

This completes the description of our inductive construction.  Any
tightening sequence for $\alpha$ obtained in this way is called an
\defterm{$(L,C)$-greedy tightening sequence} for $\alpha$.  The
importance of this construction for us is evident from the following
claim.

\begin{claim}
  \claimlabel{greedy_tightening_qi} Let $L > 1$, let $K > 1$, let
  $C \ge 4R$ and consider an $(L,C)$-greedy tightening sequence for a
  $\frac{1}{K}$-almost isometric $R$-circle $\alpha \colon S \to X$ in
  an $R$-rough geodesic metric space $X$.  If the tightening sequence
  is completely disjoint and the sum $\sum_{i=0}^{\infty} |Q_i|$ of
  the tightened segment lengths is strictly less than $|S|$ then any
  limiting $R$-circle $\alpha_{\infty} \colon S_{\infty} \to X$ is an
  $(L,C)$-quasi-isometric embedding.
\end{claim}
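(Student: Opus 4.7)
The plan is to verify both inequalities required for $\alpha_\infty \colon S_\infty \to X$ to be an $(L,C)$-quasi-isometric embedding. The upper bound $d_X\bigl(\alpha_\infty(x), \alpha_\infty(y)\bigr) \le L\, d_{S_\infty}(x,y) + C$ will come essentially for free: by \Lemref{lim_r_circle} the map $\alpha_\infty$ is an $R$-circle and therefore $R$-rough $1$-Lipschitz, so since $L > 1$ and $C \ge 4R \ge R$ the bound follows immediately.

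For the lower bound I will argue by contradiction. First I would separate off the easy case: if some $\alpha_i$ in the greedy sequence is already an $(L,C)$-quasi-isometric embedding, then by the definition of the greedy extension we have $J_i = \emptyset$, the construction sets $\alpha_j = \alpha_i$ for all $j \ge i$, the tightening sequence is eventually constant, and by the remark following \Lemref{lim_r_circle} the map $\alpha_\infty$ is isometric over $X$ to $\alpha_i$ and so inherits the lower bound. Henceforth I assume that $J_i \ne \emptyset$ for every $i$, so that $|Q_i| = s_i := \sup\{d_{S_i}(p,q) : (p,q) \in J_i\} > 0$ for all $i$.

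Now suppose, toward a contradiction, that there exist $x,y \in S_\infty$ with $d_X\bigl(\alpha_\infty(x), \alpha_\infty(y)\bigr) < \frac{1}{L} d_{S_\infty}(x,y) - C$. Since the left-hand side is nonnegative, this forces $\delta := d_{S_\infty}(x,y) > LC \ge 0$. Picking lifts $\sigma(x), \sigma(y) \in S$ and setting $p_i' = \pi^{(i)}(\sigma(x))$, $q_i' = \pi^{(i)}(\sigma(y))$, the constructions in \Lemref{lim_riemannian_circle} and \Lemref{lim_r_circle} give $d_{S_i}(p_i', q_i') \to \delta$ and $d_X\bigl(\alpha_i(p_i'), \alpha_i(q_i')\bigr) \to d_X\bigl(\alpha_\infty(x), \alpha_\infty(y)\bigr)$; moreover, since each $\pi_j \colon S_j \to S_{j+1}$ is $1$-Lipschitz, the sequence $d_{S_i}(p_i', q_i')$ is nonincreasing, so $d_{S_i}(p_i', q_i') \ge \delta$ for every $i$. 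Passing to the limit of the strict failure inequality, for all sufficiently large $i$ we obtain $(p_i', q_i') \in J_i$, and greedy maximality of $s_i$ then yields $|Q_i| = s_i \ge d_{S_i}(p_i', q_i') \ge \delta > 0$, contradicting $|Q_i| \to 0$, which must hold because $\sum_i |Q_i| < |S| < \infty$.

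The main obstacle I expect is the final transfer step: certifying that the strict failure inequality in the limit persists at finite stages, so that $(p_i', q_i')$ actually enters $J_i$ for large $i$. This should follow from continuity of the metric in $X$ together with the explicit convergence of $d_{S_i}$ to $d_{S_\infty}$ produced inside the proof of \Lemref{lim_riemannian_circle}; once it is in hand, the greedy maximality of $s_i$ together with summability of the $|Q_i|$ closes the argument cleanly.
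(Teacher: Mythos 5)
Your proof is correct and follows essentially the same approach as the paper: upper bound for free from the $R$-circle property, then a contradiction argument where a pair violating the lower bound must eventually land in $J_i$, forcing $|Q_i| = s_i$ to stay bounded away from $0$, contradicting summability of the $|Q_i|$. The one point where you streamline things is invoking monotonicity of $d_{S_i}\bigl(\pi^{(i)}(\sigma(x)), \pi^{(i)}(\sigma(y))\bigr)$ (from $1$-Lipschitzness of the $\pi_i$) to transfer the strict inequality to finite stages; the paper instead phrases this as ``for all large enough $j$'' using convergence, but the content is the same.
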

\begin{proof}
  Let $S_{\infty}$ be the limit Riemannian circle given by
  \Lemref{lim_riemannian_circle}.  So $S_{\infty}$ is the induced
  metric quotient of $(S,\delta)$ where $\delta$ is the pseudometric
  given by
  $\delta(x,y) = \lim_{i \to \infty} d_{S_i}\bigl(\pi^{(i)}(x),
  \pi^{(i)}(y) \bigr)$.  Let $\alpha_{\infty} \colon S_{\infty} \to X$
  be a limit $R$-circle as in \Lemref{lim_r_circle}.  So
  $\alpha_{\infty}$ is an $R$-circle defined by
  $\alpha_{\infty}(x) = \lim_{i \to \infty} \alpha_i \comp \pi^{(i)}
  \comp \sigma(x)$, where $\sigma \colon S_{\infty} \to S$ is a
  section of the quotient map $S \to S_{\infty}$.

  If $\alpha_{\infty}$ is not an $(L,C)$-quasi-isometric embedding
  then, since $R \le C$,
  \[ d_X\bigl(\alpha_{\infty}(x), \alpha_{\infty}(y)\bigr) <
    \frac{1}{L} d_{S_{\infty}}(x,y) - C \] for some
  $x,y \in S_{\infty}$, which then must be distinct.  But
  \[ d_{S_{\infty}}(x,y) = \lim_{i \to \infty}
    d_{S_i}\Bigl(\pi^{(i)}\bigl(\sigma(x)\bigr),
    \pi^{(i)}\bigl(\sigma(y)\bigr) \Bigr)
  \]
  and
  \[ d_X\bigl(\alpha_{\infty}(x), \alpha_{\infty}(y)\bigr) = \lim_{i
      \to \infty} d_X\Bigl(\alpha_i \comp \pi^{(i)}\bigl(
    \sigma(x)\bigr), \alpha_i \comp \pi^{(i)}\bigl(
    \sigma(y)\bigr)\Bigr) \] where, by complete disjointness,
  $\Bigl(\pi^{(i)}\bigl( \sigma(x)\bigr)\Bigr)_i$ and
  $\Bigl(\pi^{(i)}\bigl( \sigma(y)\bigr)\Bigr)_i$ are eventually
  constant.  So, for all large enough $j$,
  \begin{align*}
    &d_X\Bigl(\alpha_j \comp \pi^{(j)}\bigl(
    \sigma(x)\bigr), \alpha_j \comp \pi^{(j)}\bigl(
    \sigma(y)\bigr)\Bigr) \\
    &= \lim_{i
      \to \infty} d_X\Bigl(\alpha_i \comp \pi^{(i)}\bigl(
      \sigma(x)\bigr), \alpha_i \comp \pi^{(i)}\bigl(
      \sigma(y)\bigr)\Bigr)
  \end{align*}
  but also, for all large enough $j$,
  \[ \lim_{i \to \infty} d_X\Bigl(\alpha_i \comp \pi^{(i)}\bigl(
    \sigma(x)\bigr), \alpha_i \comp \pi^{(i)}\bigl(
    \sigma(y)\bigr)\Bigr) < \frac{1}{L}
    d_{S_j}\Bigl(\pi^{(j)}\bigl(\sigma(x)\bigr),
    \pi^{(j)}\bigl(\sigma(y)\bigr) \Bigr) - C \] so, for all large
  enough $j$,
  \[ d_X\Bigl(\alpha_j\bigl(\pi^{(j)} \comp \sigma(x)\bigr),
    \alpha_j\bigl(\pi^{(j)} \comp \sigma(y)\bigr)\Bigr) < \frac{1}{L}
    d_{S_j}\Bigl(\pi^{(j)}\comp \sigma(x), \pi^{(j)}\comp \sigma(y)
    \Bigr) - C \] which implies that
  $\bigl(\pi^{(j)}\comp \sigma(x), \pi^{(j)}\comp \sigma(y)\bigr) \in
  J_j$, for all large enough $j$.  But then, for all large enough $j$,
  \[ d_{S_j}\Bigl(\pi^{(j)}\comp \sigma(x), \pi^{(j)}\comp \sigma(y)
    \Bigr) \le s_j = |Q_j| \] with $\lim_{j \to \infty}|Q_j| = 0$ so
  \[ d_{S_{\infty}}(x,y) = \lim_{j \to \infty}
    d_{S_j}\Bigl(\pi^{(j)}\comp \sigma(x), \pi^{(j)}\comp \sigma(y)
    \Bigr) = 0 \] a contradiction.
\end{proof}

\subsection{Eventual constantness and greedy tightening sequences}

Consider a greedy tightening sequence with notation as in
\Ssecref{greedy_tightening}.  Note that if $S_i = S_{i+1}$ for some
$i$ then $S_i = S_{i+1} = S_{i+2} = \cdots$ so the tightening sequence
is eventually constant.  Moreover, for any $i$ for which
$S_i \neq S_{i+1}$, we have
\begin{align*}
  |S_i| - |S_{i+1}|
  &= |Q_i| - |\bar Q_i| \\
  &\ge |Q_i| - \frac{1}{L}|Q_i| \\
  &= \Bigl(1- \frac{1}{L} \Bigr)|Q_i| \\
  &\ge \Bigl(1- \frac{1}{L} \Bigr)L(C-2R) \\
  & = (L-1)(C-2R)
\end{align*}
by \peqnref{qi_size} and \peqnref{qi_scale}.  If $R > 0$ then, since
$C \ge 4R$, we have $C > 2R > 0$.  If $R = 0$ then $C > 2R$ is
equivalent to $C > 0$.  Thus, if $C > 0$ then
$|S_i| - |S_{i+1}| \ge (L-1)(C-2R) > 0$.  This implies the following
claim.
\begin{claim}
  \claimlabel{positive_c_eventually_constant} If $R \ge 0$, $L > 1$
  and $C > 0$ then any $(L,C)$-greedy tightening sequence for an
  $R$-circle in an $R$-rough geodesic space is eventually constant.
\end{claim}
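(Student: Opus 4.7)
The plan is simply to formalize the length-decrease computation already sketched in the paragraph preceding the claim. The inductive construction of the $(L,C)$-greedy tightening sequence exhibits a sharp dichotomy at each stage $i$: either $\alpha_i$ is already an $(L,C)$-quasi-isometric embedding, in which case the rule prescribes $\pc{i} = P_i = S_i = S_{i+1}$ and $\alpha_{i+1} = \alpha_i$, or a genuine tightening is performed. In the former case, an immediate induction on $j \ge i$ shows $S_j = S_i$ for all such $j$, so the sequence is eventually constant. It therefore suffices to prove that the latter case can occur only finitely often.

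The key step is to show that each genuine tightening strictly decreases circle length by at least a fixed positive amount. When tightening occurs at stage $i$, the inequalities \peqnref{qi_size} and \peqnref{qi_scale} give $|Q_i| \ge L(C - 2R)$ and $|\bar Q_i| \le \tfrac{1}{L}|Q_i|$, respectively, so that
\[
  |S_i| - |S_{i+1}| = |Q_i| - |\bar Q_i| \ge \Bigl(1 - \tfrac{1}{L}\Bigr)|Q_i| \ge (L-1)(C - 2R).
\]

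The only point requiring real attention is strict positivity of $(L-1)(C-2R)$. The hypothesis $L > 1$ takes care of the first factor. For the second, we split on $R$: if $R > 0$, the standing assumption $C \ge 4R$ yields $C \ge 4R > 2R$; if $R = 0$, then $C > 0 = 2R$ by hypothesis. Either way $C > 2R$, so the bound $(L-1)(C-2R) > 0$ is uniform in $i$. Since $|S_i| \ge 0$ for all $i$, at most $\bigl\lfloor |S|/\bigl((L-1)(C-2R)\bigr)\bigr\rfloor$ genuine tightenings can ever occur, after which we must be in the identity branch and $S_i = S_{i+1} = \cdots$. No obstacle arises beyond this elementary bookkeeping; the argument is purely a monotone-descent argument exploiting the explicit quantitative control supplied by \peqnref{qi_size} and \peqnref{qi_scale}.
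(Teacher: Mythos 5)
Your argument is correct and is essentially the same as the paper's: both use \peqnref{qi_size} and \peqnref{qi_scale} to get the uniform length decrease $|S_i|-|S_{i+1}|\ge (L-1)(C-2R)$ at every non-identity step, verify $C>2R$ by the same case split on $R$, and conclude by monotone descent that only finitely many tightenings occur. The only difference is that you make the counting bound and the ``identity branch persists forever'' observation explicit, which the paper leaves implicit.
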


\subsection{Disjointness and greedy tightening sequences}

Consider a greedy tightening sequence with notation as in
\Ssecref{greedy_tightening}.  Assume that $\alpha$ is
$\frac{1}{K}$-almost isometric and that the tightening sequence is
disjoint up to $j$.  Recall that, by the discussion in
\Ssecref{tightening_riemannian_circle}, we may think of the $Q_i$,
with $i < j$, as disjoint subspaces of $S$.

If $i < j$ and $Q_i \neq \emptyset$ then, by \Lemref{global_to_local}
and \peqnref{alph_qi_alph_pi_distance},
\[ |Q_i| - \frac{K-1}{K}\cdot\frac{|S|}{2} - 2R \le
  d_X\bigl(\alpha(p_i),\alpha(q_i)\bigr) \le \frac{1}{L} |Q_i| - C + 2R \]
but $C \ge 4R$ so
\[ |Q_i| - \frac{K-1}{K}\cdot\frac{|S|}{2} \le \frac{1}{L} |Q_i| \]
for all $i < j$.  Hence, we have established the following claim.
\begin{claim}
  \claimlabel{onebound} If an $(L,C)$-greedy tightening sequence for a
  $\frac{1}{K}$-almost isometric $R$-circle is disjoint up to $j$ then
  \[ |Q_i| \le \biggl(\frac{K -1}{K} \cdot
    \frac{L}{L-1}\biggr)\frac{|S|}{2} \] for any $i < j$, where $Q_i$
  is the $i$th replaced segment of the tightening sequence.
\end{claim}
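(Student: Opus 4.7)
The plan is to apply \Lemref{global_to_local} and the bound coming from the greedy construction in \Ssecref{greedy_tightening} to obtain matching lower and upper estimates of $d_X\bigl(\alpha(p_i),\alpha(q_i)\bigr)$, then solve for $|Q_i|$.

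First I would exploit the hypothesis that the sequence is disjoint up to $j$ in order to view the endpoints $p_i,q_i$ of $Q_i$ as points of the original circle $S$. Concretely, by the discussion in \Ssecref{tightening_riemannian_circle}, for $i<j$ the segment $Q_i \subset \pc{i} \subset S_i$ is also an embedded segment of $S$, lying inside $\pc{0,i}$, so $p_i,q_i \in S$ and $|Q_i|$ is unambiguous. Moreover, $\alpha_i$ and $\alpha$ agree on $\pc{0,i}$, since the only modifications made to produce $\alpha_i$ from $\alpha$ are replacements on the (disjoint) earlier $Q_{i'}$ with $i' < i$. I would also argue $d_S(p_i,q_i) = |Q_i|$: the arc $Q_i$ embeds as one of the two arcs of $S$ joining $p_i$ and $q_i$, while the other arc of $S$ arises from $P_i$ by \emph{un}-tightening some prior replacements and is therefore at least as long as $P_i \ge Q_i$.

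Once this identification is in place, applying \Lemref{global_to_local} to the $\frac{1}{K}$-almost isometric $R$-circle $\alpha$ yields the lower bound
\[ d_X\bigl(\alpha(p_i),\alpha(q_i)\bigr) \;\ge\; |Q_i| - \frac{K-1}{K}\cdot\frac{|S|}{2} - 2R, \]
while the construction of the greedy tightening (equation \eqnref{alph_qi_alph_pi_distance}) supplies the upper bound
\[ d_X\bigl(\alpha_i(p_i),\alpha_i(q_i)\bigr) \;\le\; \frac{1}{L}|Q_i| - C + 2R. \]
Combining the two and invoking $C \ge 4R$ to absorb the $\pm 2R$ terms gives $|Q_i|\bigl(1 - \tfrac{1}{L}\bigr) \le \tfrac{K-1}{K}\cdot\tfrac{|S|}{2}$, and rearranging produces the asserted inequality. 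The $Q_i = \emptyset$ case is trivial.

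The only real obstacle is the bookkeeping in the first paragraph: one must be careful that the length $|Q_i|$ computed in $S_i$ equals the length of the corresponding arc in $S$, and that the two endpoints have not been ``moved'' under the inductive replacements. Everything else is a single inequality chase, so once the $p_i,q_i$ are correctly pulled back to $S$ the proof is immediate.
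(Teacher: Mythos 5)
Your proposal is correct and follows essentially the same route as the paper: combine the lower bound from \Lemref{global_to_local} with the upper bound \peqnref{alph_qi_alph_pi_distance} from the greedy construction, absorb the $\pm 2R$ terms using $C \ge 4R$, and solve for $|Q_i|$. The endpoint bookkeeping you spell out in your first paragraph (that disjointness up to $j$ lets one view $Q_i$ as a geodesic arc of $S$ with $\alpha_i$ agreeing with $\alpha$ at $p_i,q_i$) is exactly what the paper handles implicitly by appealing to the discussion in \Ssecref{tightening_riemannian_circle}.
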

By this claim, we can find a pair of points $p,q$ in the closure of
$S \setminus \bigl(\bigcup_{i=1}^{j-1}Q_i\bigr)$ at distance
$d_S(p,q) \ge \frac{|S|}{2} - \frac{K -1}{K} \cdot
\frac{L}{L-1}\cdot\frac{|S|}{4}$.  Let $A_1$ and $A_2$ be the two
segments of $S$ between $p$ and $q$.  If
$I_1 = \{i < j \sth Q_i \subseteq A_1 \}$ then, since $\alpha_j$ is an
$R$-circle,
\begin{align*}
  d_X\bigl(\alpha(p),\alpha(q)\bigr)
  &\le d_{S_j}(p,q) + R \\
  &\le |A_1| - \sum_{i \in I_1}|Q_i| + \sum_{i \in I_1}|\bar Q_i| + R \\
  &\le |A_1| - \sum_{i \in I_1}|Q_i| + \frac{1}{L}\sum_{i \in I_1}|Q_i| + R \\
  &= |A_1| - \frac{L - 1}{L}\sum_{i \in I_1}|Q_i| + R
\end{align*}
where the last inequality follows by \peqnref{qi_scale}.  The same
corresponding relations also hold for $A_2$ and
$I_2 = \{i < j \sth Q_i \subseteq A_2 \}$ and so, by
\Lemref{global_to_local},
\begin{align*}
  &|S| - \frac{L-1}{L}\sum_{i<j}|Q_i| + 2R \\
  &\ge 2 d_X\bigl(\alpha(p),\alpha(q)\bigr) \\
  &\ge 2d_S(p,q) - \frac{K-1}{K} \cdot |S| - 4R \\
  &\ge |S| - \Bigl(\frac{K -1}{K} \cdot
    \frac{L}{L-1}\Bigr)\frac{|S|}{2} - \frac{K-1}{K} \cdot |S| - 4R
\end{align*}
which establishes the following claim.
\begin{claim}
  \claimlabel{sumbound} If an $(L,C)$-greedy tightening sequence for a
  $\frac{1}{K}$-almost isometric $R$-circle is disjoint up to $j$ then
  \[ \sum_{i<j}|Q_i| \le \biggl(\frac{K-1}{K} \cdot
    \frac{L(3L-2)}{2(L-1)^2}\biggr)|S| + \frac{6LR}{L-1} \] where
  $Q_i$ is the $i$th replaced segment of the tightening sequence.
\end{claim}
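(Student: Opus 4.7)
The plan is to compare an upper bound and a lower bound on $d_X(\alpha(p),\alpha(q))$ for a suitably chosen nearly antipodal pair $p,q \in S$ lying outside the interiors of the previously tightened segments, and then solve the resulting inequality for $\sum_{i<j}|Q_i|$.

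First I would choose the pair $p,q$. Since the tightening sequence is disjoint up to $j$, the segments $\{Q_i\}_{i<j}$ embed disjointly in $S$. Take $p$ to be an endpoint of one of the $Q_i$ (hence in the closure of $S \setminus \bigcup_{i<j}Q_i$), let $\bar p$ be its antipode, and if $\bar p$ lies in the interior of some $Q_{i'}$, slide it to the nearer endpoint to obtain $q$. By \Claimref{onebound}, this slide costs at most $|Q_{i'}|/2 \le \frac{K-1}{K}\cdot\frac{L}{L-1}\cdot\frac{|S|}{4}$, so $d_S(p,q) \ge \frac{|S|}{2} - \frac{K-1}{K}\cdot\frac{L}{L-1}\cdot\frac{|S|}{4}$. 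The pair $p,q$ splits $S$ into two arcs $A_1,A_2$ with $|A_1|+|A_2|=|S|$, and each $Q_i$ with $i<j$ is contained entirely in one of them.

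For the upper bound, $\alpha_j$ agrees with $\alpha$ at $p,q$ and is an $R$-circle, so $d_X(\alpha(p),\alpha(q)) \le d_{S_j}(p,q) + R$. Bounding $d_{S_j}(p,q)$ by the length of the arc in $S_j$ obtained from $A_1$ by replacing each contained $Q_i$ with $\bar Q_i$, and using \peqnref{qi_scale}, gives
\[
  d_X(\alpha(p),\alpha(q)) \le |A_1| - \frac{L-1}{L}\sum_{i \in I_1}|Q_i| + R,
\]
where $I_1 = \{i<j : Q_i \subseteq A_1\}$, and the analogous estimate for $A_2$ sums with this to
\[
  2\,d_X(\alpha(p),\alpha(q)) \le |S| - \frac{L-1}{L}\sum_{i<j}|Q_i| + 2R.
\]
For the lower bound, \Lemref{global_to_local} applied to $p,q$ together with the estimate on $d_S(p,q)$ above yields
\[
  2\,d_X(\alpha(p),\alpha(q)) \ge |S| - \frac{K-1}{K}\cdot\frac{L}{L-1}\cdot\frac{|S|}{2} - \frac{K-1}{K}|S| - 4R.
\]

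Comparing these two estimates gives a linear inequality in $\sum_{i<j}|Q_i|$. Solving it (multiply through by $\frac{L}{L-1}$ and collect the two $\frac{K-1}{K}$ terms, using $\frac{L}{2(L-1)}+1 = \frac{3L-2}{2(L-1)}$) produces the bound stated in the claim. The only delicate step is the choice of the nearly antipodal pair $p,q$ in the complement of the $Q_i$; the rest is a direct algebraic manipulation exploiting the contraction $|\bar Q_i| \le |Q_i|/L$ and the almost-isometric property of $\alpha$.
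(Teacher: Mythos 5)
Your proposal is correct and follows essentially the same route as the paper's proof: the same choice of a nearly antipodal pair in the closure of the complement of the tightened segments via \Claimref{onebound}, the same upper bound through $d_{S_j}(p,q)+R$ and \peqnref{qi_scale} applied separately to the two arcs, the same lower bound via \Lemref{global_to_local}, and the same algebraic rearrangement yielding the factor $\frac{3L-2}{2(L-1)}$.
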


\Claimref{sumbound} implies that
\[ \sum_{i<j}|Q_i| \le \biggl(\frac{K-1}{K} \cdot
  \frac{L(3L-2)}{2(L-1)^2} + \frac{6LR}{|S|(L-1)} \biggr)|S| \] so if
$\frac{K-1}{K} \cdot \frac{L(3L-2)}{2(L-1)^2} < \frac{1}{N}$ then if
$|S| > M$ for some $M$ depending only on $K$, $L$, $R$ and $N$ then
$\frac{K-1}{K} \cdot \frac{L(3L-2)}{2(L-1)^2} + \frac{6LR}{|S|(L-1)} <
\frac{1}{N}$ and so $\sum_{i<j}|Q_i| < \frac{|S|}{N}$.  Since
$\frac{K-1}{K} \cdot \frac{L(3L-2)}{2(L-1)^2} < \frac{1}{N}$ is
equivalent to $K < \frac{NL(3L-2)}{(3N-2)L^2 - (2N-4)L - 2}$, we have
established the following claim.

\begin{claim}
  \claimlabel{greedy_bounds} Let $N > 1$, let $L > 1$, let $R \ge 0$
  and let $K > 1$ satisfy
  $K < \frac{NL(3L-2)}{(3N-2)L^2 - (2N-4)L - 2}$.  Then there exists
  an $M > 0$ such that if an $(L,C)$-greedy tightening sequence for a
  $\frac{1}{K}$-almost isometric $R$-circle $\alpha \colon S \to X$ of
  length $|S| > M$ is disjoint up to $j$ then
  \[ \sum_{i<j}|Q_i| < \frac{|S|}{N} \] where the $Q_i$ are the
  tightened segments of the tightening sequence.
\end{claim}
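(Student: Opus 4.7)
The plan is to observe that Claim \claimref{greedy_bounds} is essentially an immediate corollary of Claim \claimref{sumbound} combined with the algebraic reformulation of the hypothesis on $K$. Indeed, the paragraph preceding the statement already lays out the whole argument; all that is needed is to package it as a standalone proof.

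First I would invoke Claim \claimref{sumbound} and rewrite the resulting bound by factoring $|S|$ out of the right-hand side, obtaining
\[
\sum_{i<j}|Q_i| \le \biggl(\frac{K-1}{K} \cdot \frac{L(3L-2)}{2(L-1)^2} + \frac{6LR}{|S|(L-1)}\biggr)|S|.
\]
The first term inside the parentheses depends only on $K$, $L$, $N$, while the second term is a vanishing perturbation as $|S| \to \infty$.

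Next I would verify the algebraic equivalence
\[
\frac{K-1}{K} \cdot \frac{L(3L-2)}{2(L-1)^2} < \frac{1}{N} \quad \Longleftrightarrow \quad K < \frac{NL(3L-2)}{(3N-2)L^2 - (2N-4)L - 2},
\]
which is obtained by clearing denominators and solving for $K$. (One should note that for the relevant range of $L>1$ and $N>1$, the denominator on the right is positive, so the direction of inequality is preserved; this is the only place where any care is required, and is the closest thing to an obstacle in this proof — though it is still entirely routine.)

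Finally, having put $\epsilon := \tfrac{1}{N} - \tfrac{K-1}{K} \cdot \tfrac{L(3L-2)}{2(L-1)^2} > 0$ by the hypothesis on $K$, I would choose $M$ large enough that $\tfrac{6LR}{M(L-1)} < \epsilon$ (when $R = 0$, any $M > 0$ works). Then for any $(L,C)$-greedy tightening sequence of a $\tfrac{1}{K}$-almost isometric $R$-circle $\alpha \colon S \to X$ with $|S| > M$ that is disjoint up to $j$, the bracketed coefficient in the bound above is strictly less than $\tfrac{1}{N}$, yielding $\sum_{i<j}|Q_i| < \tfrac{|S|}{N}$ as desired.
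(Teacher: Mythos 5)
Your proposal is correct and is essentially identical to the paper's own argument, which appears verbatim in the paragraph preceding the claim: apply Claim \claimref{sumbound}, factor out $|S|$, note that the hypothesis on $K$ is algebraically equivalent to $\frac{K-1}{K}\cdot\frac{L(3L-2)}{2(L-1)^2} < \frac{1}{N}$, and choose $M$ large enough to absorb the $\frac{6LR}{|S|(L-1)}$ term. No gaps.
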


The following claim about rational functions has a short and
elementary proof.  We will make use of it below.

\begin{claim}
  \claimlabel{rational_functions} The inequalities
  \[ 1 < \frac{L\bigl(9L^2 -3L -4\bigr)}{7L^3 + 3L^2 - 10L + 2} \] and
  \[ 1 < \frac{NL(3L-2)}{(3N-2)L^2 - (2N-4)L - 2} \] and
  \[ \frac{L\bigl(9L^2 -3L -4\bigr)}{7L^3 + 3L^2 - 10L + 2} \le
    \frac{L(5L-4)}{3L^2 - 2} \] and
  \[ \frac{L\bigl(9L^2 -3L -4\bigr)}{7L^3 + 3L^2 - 10L + 2} \le
    \frac{L(7L-6)}{5L^2-2L-2} \] hold for any $L > 1$ and $N > 1$.
\end{claim}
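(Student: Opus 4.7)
The proof is purely algebraic: each of the four assertions reduces, after multiplying by denominators, to a polynomial inequality in $L$ (and $N$) that vanishes to high order at $L = 1$ and whose remaining factor is manifestly nonnegative for $L > 1$. The plan has two stages, and I expect the only real obstacle to be bookkeeping.

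\emph{Stage 1: verify that the denominators are positive.} For $L > 1$ and $N > 1$, I would check that $7L^3 + 3L^2 - 10L + 2$, $3L^2 - 2$, $5L^2 - 2L - 2$, and $(3N-2)L^2 - (2N-4)L - 2$ are all positive. Each evaluates to a positive number at $L = 1$ (respectively $2$, $1$, $1$, and $N$), and each has positive derivative in $L$ at $L = 1$ and convex (positive second derivative) for the relevant parameter range, so they remain positive for $L > 1$. This lets me cross-multiply in each of the four inequalities without worrying about sign flips.

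\emph{Stage 2: factor the polynomial differences.} Clearing denominators in the first inequality gives $9L^3 - 3L^2 - 4L - (7L^3 + 3L^2 - 10L + 2) = 2(L-1)^3$, which is strictly positive for $L > 1$. For the second, the difference reduces to $3NL^2 - 2NL - [(3N-2)L^2 - (2N-4)L - 2] = 2(L-1)^2$, again strictly positive. For the third, subtracting and dividing by $L$ (which is positive), I would compute
\[
(5L-4)(7L^3 + 3L^2 - 10L + 2) - (9L^2 - 3L - 4)(3L^2 - 2) = 4(L-1)^2(2L^2 + 3L - 4),
\]
and the quadratic $2L^2 + 3L - 4$ has its positive root at $\frac{-3+\sqrt{41}}{4} < 1$, so it is positive for $L \ge 1$. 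For the fourth, the analogous calculation yields
\[
(7L-6)(7L^3 + 3L^2 - 10L + 2) - (9L^2 - 3L - 4)(5L^2 - 2L - 2) = 4(L-1)^2(L^2 + 5L - 5),
\]
and $L^2 + 5L - 5$ has positive root $\frac{-5 + 3\sqrt{5}}{2} < 1$, so it is positive for $L \ge 1$.

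Each of the four polynomial identities can be verified either by direct expansion or, more efficiently, by observing that the difference has a root at $L = 1$ of the correct multiplicity and then performing synthetic division twice in cases three and four. The ``hard part,'' such as it is, lies entirely in executing these expansions without arithmetic slips; once the factorization $(L-1)^k \cdot(\text{positive for } L > 1)$ is exhibited, all four inequalities follow immediately and the two weak inequalities are in fact strict for $L > 1$.
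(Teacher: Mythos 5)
Your proof is correct, and since the paper declines to prove the claim (remarking only that it ``has a short and elementary proof''), your argument is exactly what was left to the reader. I verified the key algebraic reductions: for the first inequality the numerator minus denominator is $2(L-1)^3$; for the second it is $2(L-1)^2$; for the third the cross-multiplied difference is $4(L-1)^2(2L^2+3L-4)$; and for the fourth it is $4(L-1)^2(L^2+5L-5)$. Your denominator-positivity checks at $L=1$ (giving $2$, $1$, $1$, and $N$) together with the monotonicity observations are also right, and the quadratics $2L^2+3L-4$ and $L^2+5L-5$ each take the value $1$ at $L=1$ with their larger roots below $1$, so they stay positive on $L\ge 1$ as you say. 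Nothing is missing.
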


The next claim is essential in proving disjointness of greedy
tightening sequences.

\begin{claim}
  \claimlabel{greedy_tightening_almost_disjoint} Let $K > 1$, let
  $L > 1$ and let $R \ge 0$.  There exists an $M > 0$ such that if $X$
  is an $R$-rough geodesic metric space and $\alpha \colon S \to X$ is
  a $\frac{1}{K}$-almost isometric $R$-circle with $|S| > M$ and
  \[ K < \frac{L\bigl(9L^2 -3L -4\bigr)}{7L^3 + 3L^2 - 10L + 2} \] and
  $C \ge 4R$ then any $(L,C)$-greedy tightening sequence for $\alpha$
  that is disjoint up to $j$ satisfies the following statement.  With
  notation as above, if $(p,q) \in J_j$ and $Q$ is a geodesic segment
  from $p$ to $q$ in $S_j$ then $Q \subset \pc{0,j}$, where we view
  $\pc{0,j}$ as a subspace of $S_j$ via the embedding
  $\pc{0,j} \hookrightarrow S_j$.
\end{claim}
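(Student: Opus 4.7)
The plan is to argue by contradiction. Suppose the geodesic $Q$ meets the interior of some $\bar Q_k$ with $k<j$; I will extract a pair $(p^*,q^*) \in J_k$ with $d_{S_k}(p^*,q^*) > s_k$, contradicting the fact that at step $k$ the greedy procedure chose $(p_k,q_k)$ to realize the supremum $s_k$.

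First I dispose of the degenerate sub-case $Q \subset \bar Q_k$: since $\alpha_j|_{\bar Q_k} = \gamma_k$ is (away from its two length-$R$ constant endpoint paths) an $R$-rough geodesic of length $|\bar Q_k| - 2R$, we have $d_X(\alpha_j(p),\alpha_j(q)) \ge d_{\bar Q_k}(p,q) - 3R \ge d_{S_j}(p,q) - 3R$. Combining this with $(p,q) \in J_j$ and $C \ge 4R$ forces $d_{S_j}(p,q)(1 - 1/L) < -R$, which is absurd. Thus $Q$ strictly contains $\bar Q_k$, and after an endpoint reduction (discussed below) I may assume $p,q \in \pc{0,j}$ and that every $\bar Q_\ell$ met by $Q$ is entirely contained in $Q$; in particular $\bar Q_k \subset Q$.

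Under this reduction I lift $(p,q)$ to $(p^*,q^*) := (p,q)$ viewed as a pair of points in $\pc{0,j} \subset \pc{0,k} \subset S_k$. The composite projection $\pi_{j-1} \comp \cdots \comp \pi_k \colon S_k \to S_j$ is $1$-Lipschitz and is the identity on $\pc{0,j}$, so $\alpha_k(p^*) = \alpha(p) = \alpha_j(p)$ and similarly for $q^*$; hence the inequality $d_{S_k}(p^*,q^*) \ge d_{S_j}(p,q)$ immediately promotes the $J_j$-inequality to a $J_k$-inequality, giving $(p^*,q^*) \in J_k$. For the strict inequality $d_{S_k}(p^*,q^*) > s_k = |Q_k|$ I examine the two arcs of $S_k$ between $p^*$ and $q^*$: the one lifting $Q$ has length at least $|Q| + (|Q_k| - |\bar Q_k|)$, strictly exceeding $|Q_k|$ because $|Q| > |\bar Q_k|$ (using $p \neq p_k$ and $q \neq q_k$); the complementary arc has length at least $|S_j|/2$, which by \Claimref{greedy_bounds} satisfies $|S_j| \ge (1-1/N)|S|$, and therefore exceeds the \Claimref{onebound} upper bound on $|Q_k|$ whenever $K$ lies in the required range. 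The hypothesis $|S| > M$ absorbs the additive $R$-errors appearing when translating between $d_S$ and $d_{S_j}$, and balancing these inequalities yields precisely the threshold $K < L(9L^2-3L-4)/(7L^3+3L^2-10L+2)$ of \Claimref{rational_functions}.

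The main obstacle is the endpoint reduction itself. If $p$ lies in the interior of some $\bar Q_\ell$, then simply replacing it by an adjacent endpoint does not preserve the $J_j$-inequality (triangle inequality pushes the wrong way); the cleaner move is instead to lift $p$ through the inverse of the affine contraction $\pi_\ell \colon Q_\ell \to \bar Q_\ell$, which stretches by factor $|Q_\ell|/|\bar Q_\ell| \ge L$, obtaining $p^* \in Q_\ell \subset S_k$. Verifying that the resulting candidate $(p^*,q^*)$ still lies in $J_k$ then requires a detailed comparison of $\alpha_k(p^*) = \alpha(p^*)$ with $\alpha_j(p) = \gamma_\ell(p)$, using \Rmkref{endpoint_distances} together with the $R$-rough-geodesic bound on $\gamma_\ell$ and \Lemref{global_to_local} applied to $\alpha$; it is this careful bookkeeping of accumulated $R$-terms and stretching factors that produces the precise polynomial inequality on $K$ stated in the claim.
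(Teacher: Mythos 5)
Your high-level plan — lift $(p,q)$ up to some earlier $S_k$ and contradict the maximality of $s_k$ — is structurally different from the paper. The paper instead extends $Q$ at its endpoints to a segment $Q_0^+$ lifted all the way to $S = S_0$, derives the key bound $d_X\bigl(\alpha(p^+),\alpha(q^+)\bigr) < \tfrac{1}{L}|Q_0^+| - C$, and then plays $|Q_0^+|$ off against $|S_m|/2$, $|Q_0|$, $|S|/2$, and finally \Lemref{global_to_local} to get the contradiction. A version of your ``$d_{S_k}(p^*,q^*) > s_k$'' comparison does appear as an intermediate move in the paper (at the stages $S_m$ and $S_0$), but the paper never needs to lift the interior endpoints of $Q$ to interior points of $Q_\ell$; it only ever uses the endpoints $p_\ell, q_\ell$ of the tightened segments, which belong to $\pc{0,j}$.

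The genuine gap in your proposal is the ``endpoint reduction,'' which you defer and sketch via the inverse affine contraction. This step does not work. If $p$ lies in the interior of $\bar Q_\ell$ and $p^* = \pi_\ell^{-1}(p) \in Q_\ell^\circ \subset \pc{0,k}$, then $\alpha_k(p^*) = \alpha(p^*)$, but $\alpha_j(p) = \gamma_\ell(p)$ is a point on the $R$-rough geodesic that \emph{replaced} the long detour $\alpha|_{Q_\ell}$. These are on two different paths that only share endpoints, so $d_X\bigl(\alpha(p^*), \alpha_j(p)\bigr)$ can be as large as order $|Q_\ell|$, which can be a positive fraction of $|S|$. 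The stretching factor $\ge L$ you invoke works \emph{against} you here: lifting $p$ further into $Q_\ell$ pushes $\alpha(p^*)$ farther from $\alpha_j(p)$, not closer, so the chain of triangle inequalities needed to certify $(p^*,q^*) \in J_k$ cannot close. You also dismiss the direct endpoint replacement as failing because ``the triangle inequality pushes the wrong way,'' but the paper shows this is exactly the move that works: taking $p^+$ to be an \emph{endpoint} of $\bar A_p = \bar Q_\ell$ (so $\alpha$ and every $\alpha_i$ agree at $p^+$), controlling $d_X\bigl(\alpha(p^+),\alpha_j(p)\bigr) \le |\bar A_p|$ via \Rmkref{endpoint_distances}, and then absorbing that loss because the lift adds length $|A_p| \ge L|\bar A_p|$ to $Q_0^+$, which after multiplication by the $\frac{1}{L}$ in the $J$-inequality recovers exactly $|\bar A_p|$. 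This book-keeping is the heart of the paper's proof and is absent from your sketch.

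Two smaller issues. First, you invoke \Claimref{greedy_bounds} to get $|S_j| \ge (1-\tfrac{1}{N})|S|$, but that claim carries the extra hypothesis $K < \tfrac{NL(3L-2)}{(3N-2)L^2-(2N-4)L-2}$ which is not available here; you should use \Claimref{sumbound} and \Claimref{onebound} directly (the resulting condition on $K$ is $K < \tfrac{L(5L-4)}{3L^2-2}$, which \Claimref{rational_functions} shows is implied by the hypothesis). Second, you assert without derivation that the balancing ``yields precisely the threshold'' $\tfrac{L(9L^2-3L-4)}{7L^3+3L^2-10L+2}$; in fact your two-arc comparison alone only produces the weaker $\tfrac{L(5L-4)}{3L^2-2}$ threshold, so even if the endpoint reduction were repaired you would not recover the stated bound — that bound in the paper arises from the later $|Q_0^+| < |S|$ and global-to-local step, which your argument omits.
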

\begin{proof}
  First we will show that $Q$ is not contained in $\bar Q_i$ for any
  $i < j$.  Recall that $\bar Q_i$ is the concatenation
  $A \bar Q'_i B$ where $\alpha_j|_{\bar Q'_i} \colon \bar Q'_i \to X$
  is an $R$-rough geodesic and $\alpha_j$ is constant on $A$ and $B$,
  each of which is isometric to $[0,R]$.  By \peqnref{qi_size}, we
  have $|Q| \ge (C-2R)L \ge (4R - 2R)L \ge 2R$ and $|Q| > 0$ so we
  cannot have $Q \subseteq A$ or $Q \subseteq B$.  We also cannot
  have $\bar Q_i' \subseteq Q \subseteq \bar Q_i$ since then,
  \begin{align*}
    |\bar Q_i| - 2R
    &= d_X\bigl(\alpha_i(p_i), \alpha_i(q_i)\bigr) \\
    &= d_X\bigl(\alpha_j(p), \alpha_j(q)\bigr) \\
    &< \frac{1}{L}|Q| - C \\
    &\le \frac{1}{L}|\bar Q_i| - C \\
    &< |\bar Q_i| - C
  \end{align*}
  which contradicts $C \ge 4R$.  So, if $Q \subseteq \bar Q_i$ then
  some endpoint of $Q$ is contained in $\bar Q'_i$.  But this
  implies that $\alpha_j|_{Q} \colon Q \to X$ is a $2R$-rough
  geodesic and so, by \peqnref{alph_qi_alph_pi_distance} and
  \peqnref{qi_size},
  \begin{align*}
    |Q| - 2R
    &\le d_X\bigl(\alpha_j(p), \alpha_j(q)\bigr) \\
    &< \frac{1}{L}|Q| - C \\
    &< |Q| - C
  \end{align*}
  which, again, contradicts $C \ge 4R$.  Thus we see that $Q$ is not
  contained in $\bar Q_i$ for an $i < j$.  Hence $Q$ intersects
  $\pc{0,j}$ nontrivially in $S_j$.

\begin{figure}[ht]
  \newcommand{\circrad}{1.2cm}
  \newcommand{\seglen}{3cm}

  \newcommand{\arc}[2]{
    \draw[thick] (#1:\circrad) arc[start angle=#1, end angle=#2, radius=\circrad]}
  \newcommand{\replarc}[4]{
    \draw[train] (#1:\circrad) arc[start angle=#1, end angle=#2, radius=\circrad];
    \draw (#1:\circrad) -- (#2:\circrad) coordinate[midway](m);
    \node at (#1/2+#2/2:\circrad+2.5ex) {#3};
    \path (m) +(180+#1/2+#2/2:1em) coordinate (M);
    \node at (M) {#4}}
  \newcommand{\replarcb}[4]{
    \draw[train] (#1:\circrad) arc[start angle=#1, end angle=#2, radius=\circrad] coordinate[midway](m);
    \node at (#1/2+#2/2:\circrad+1.5ex) {#3};
    \path (m) +(180+#1/2+#2/2:1em) coordinate (M);
    \node at (M) {#4}}
  \newcommand{\replarcc}[4]{
    \draw[red,thick] (#1:\circrad) -- (#2:\circrad) coordinate[midway](m);
    \node at (#1/2+#2/2:\circrad+1.5ex) {#3};
    \path (m) +(180+#1/2+#2/2:1em) coordinate (M);
    \node at (M) {#4}}

  \newcommand{\lox}{-1.1cm}
  \newcommand{\ltx}{1.1cm}
  
  \centering
  
  \[ \begin{tikzcd}[cramped,sep=small]
      \begin{tikzpicture}
        \node at (0,0) {$S_j$};
        
        \node at (1em-\circrad,0) {$Q$};
        \path (95:\circrad) -- (145:\circrad) coordinate[midway] (p);
        \path (95:\circrad-1em) -- (145:\circrad-1em) coordinate[midway] (pl);
        \coordinate (q) at (235:\circrad);
        \coordinate (ql) at (235:\circrad-1.5ex);
        \draw[green,double distance=4pt] (p)
        -- (145:\circrad)
        arc[start angle=145, end angle=165, radius=\circrad]
        -- (195:\circrad)
        arc[start angle=195, end angle=205, radius=\circrad]
        -- (220:\circrad)
        arc[start angle=220, end angle=235, radius=\circrad];
        
        \replarcc{-65}{-30}{}{};
        \arc{-30}{-5};
        \replarcc{-5}{55}{}{};
        \arc{55}{95};
        \replarcc{95}{145}{$\bar A_p$}{};
        \arc{145}{165};
        \replarcc{165}{195}{}{};
        \arc{195}{205};
        \replarcc{205}{220}{}{};
        \arc{220}{245};
        \replarcc{245}{275}{}{};
        \arc{275}{295};
        
        \node[vertex] at (p) {};
        \node[vertex] at (q) {};
        \node at (pl) {$p$};
        \node at (ql) {$q$};
      \end{tikzpicture}
      &
      \begin{tikzpicture}
        \coordinate (x1) at (\lox,\seglen*0.5);
        \coordinate (p) at (\lox,\seglen*0.325);
        \coordinate (x2) at (\lox,\seglen*0.15);
        \coordinate (x3) at (\lox,0);
        \coordinate (x4) at (\lox,-\seglen*0.21);
        \coordinate (x5) at (\lox,-\seglen*0.28);
        \coordinate (x6) at (\lox,-\seglen*0.39);
        \coordinate (q) at (\lox,-\seglen*0.5);
        \draw[green,double distance=4pt] (p) -- (q);
        \draw[red,thick] (x1) -- (x2);
        \draw[thick] (x2) -- (x3);
        \draw[red,thick] (x3) -- (x4);
        \draw[thick] (x4) -- (x5);
        \draw[red,thick] (x5) -- (x6);
        \draw[thick] (x6) -- (q);
        \node[vertex] at (p) {};
        \node[vertex] at (q) {};
        
        \draw[decoration=brace,decorate] ($(x1)+(1.5ex,0)$)
        -- coordinate[midway] (mid) ($(x2)+(1.5ex,1pt)$);
        \node at ($(mid)+(2.5ex,0)$) {$\bar A_p$};
        \draw[decoration=brace,decorate] ($(x2)+(1.5ex,-1pt)$)
        -- coordinate[midway] (mid) (\lox+1.5ex,-\seglen*0.5);
        \node at ($(mid)+(2.5ex,0)$) {$Q^{-}$};
        \draw[decoration=brace,decorate] (\lox-1.5ex,-\seglen*0.5)
        -- coordinate[midway] (mid) (\lox-1.5ex,\seglen*0.5);
        \node at ($(mid)-(2.5ex,0)$) {$Q^{+}$};
        
        \coordinate (y1) at (\ltx,\seglen*0.5);
        \coordinate (r) at (\ltx,\seglen*0.325);
        \coordinate (y2) at (\ltx,\seglen*0.15);
        \coordinate (y3) at (\ltx,0);
        \coordinate (y4) at (\ltx,-\seglen*0.21);
        \coordinate (y5) at (\ltx,-\seglen*0.28);
        \coordinate (y6) at (\ltx,-\seglen*0.39);
        \coordinate (s) at (\ltx,-\seglen*0.5);
        \draw[train] (y1) -- (y2);
        \draw[thick] (y2) -- (y3);
        \draw[train] (y3) -- (y4);
        \draw[thick] (y4) -- (y5);
        \draw[train] (y5) -- (y6);
        \draw[thick] (y6) -- (s);
        
        \draw[decoration=brace,decorate] ($(y2)-(1.5ex,-1pt)$)
        -- coordinate[midway] (mid) ($(y1)-(1.5ex,0)$);
        \node at ($(mid)-(2.5ex,0)$) {$A_p$};
        \draw[decoration=brace,decorate] (\ltx-1.5ex,-\seglen*0.5)
        -- coordinate[midway] (mid) ($(y2)-(1.5ex,1pt)$);
        \node at ($(mid)-(2.5ex,0)$) {$Q_0^{-}$};
        \draw[decoration=brace,decorate] (\ltx+1.5ex,\seglen*0.5)
        -- coordinate[midway] (mid) (\ltx+1.5ex,-\seglen*0.5);
        \node at ($(mid)+(2.5ex,0)$) {$Q_0^{+}$};
      \end{tikzpicture}
      \ar[l]
      \ar[r]
      &
      \begin{tikzpicture}
        \node at (0,0) {$S$};
        
        \replarcb{-65}{-30}{}{};
        \arc{-30}{-5};
        \replarcb{-5}{55}{}{};
        \arc{55}{95};
        \replarcb{95}{145}{$A_p$}{};
        \arc{145}{165};
        \replarcb{165}{195}{}{};
        \arc{195}{205};
        \replarcb{205}{220}{}{};
        \arc{220}{245};
        \replarcb{245}{275}{}{};
        \arc{275}{295};
      \end{tikzpicture}
  \end{tikzcd} \]
  
\caption{In the proof of \Claimref{greedy_tightening_almost_disjoint},
  from a geodesic segment $Q$ of $S_j$ (indicated by a green outline)
  we obtain $Q^{-1}$ by removing the interiors of any $\bar Q_i$ that
  are partially contained in $Q$.  We obtain $Q^{+} \to S_j$ by
  extending the inclusion $Q \hookrightarrow S_j$ to include full
  copies of any $\bar Q_i$ that are partially contained in $Q$.  From
  $Q^{-} \hookrightarrow S_j$ and $Q^{+} \to S_j$ we obtain
  $Q_0^{-} \hookrightarrow S$ and $Q_0^{+} \to S$ in $S = S_0$ by
  replacing any $\bar Q_i \hookrightarrow S_j$ with
  $Q_i \hookrightarrow S$.  The $\bar Q_i$ with $i < j$ are draw in
  red in $S_j$.  The $Q_i$ with $i < j$ are drawn with perpendicular
  markings in $S$.}
\figlabel{greedy_tightening_qplus_qminus}
\end{figure}

See \Figref{greedy_tightening_qplus_qminus}.  We will define a segment
$\bar A_p \subset S_j$ containing $p$ and a corresponding segment
$A_p \subset S$.  (Note that $\bar A_p$ does not denote the closure of
$A_p$ here.) If $p$ is contained in the interior of $\bar Q_i$ for
some $i < j$ then let $\bar A_p = \bar Q_i$ and let $A_p = Q_i$.
Otherwise, let $A_p = \bar A_p = \{p\}$.  Define $A_q$ and $\bar A_q$
similarly for $q$.  A priori, it is possible that
$\bar A_p = \bar A_q$.  Let $Q^{-}$ be obtained from $Q$ by
subtracting the interiors of $\bar A_p$ and $\bar A_q$ and let
$Q^{+} \to S_j$ extend $Q \hookrightarrow S_j$ so as to include a full
copy of $\bar A_p$ and a full copy of $\bar A_q$.  Let
$Q_0^{-} \subset S$ be obtained from $Q^{-} \subset S_j$ by replacing
any $\bar Q_i \subset Q^{-}$ with $Q_i \subset S$, for $i < j$.  Let
$Q_0^{+} \to S$ be obtained from $Q^{+} \to S_j$ by replacing any
$\bar Q_i \hookrightarrow S_j$, where $i < j$, with
$Q_i \hookrightarrow S$.

  Let $p^{+}$ and $q^{+}$ be the images of the endpoints of $Q_0^{+}$ in
  $S$, with $p^{+}$ the endpoint corresponding to $p$ and $q^{+}$
  the endpoint corresponding to $q$.  Let $p^{-}$ and $q^{-}$ be the
  endpoints of $Q_0^{-}$ in $S$, with $p^{-}$ the endpoint corresponding
  to $p$ and $q^{-}$ the endpoint corresponding to $q$.  Then we
  have
  \begin{align*}
    &d_X\bigl(\alpha(p^{+}), \alpha(q^{+})\bigr) \\
    &\le d_X\bigl(\alpha(p^{+}), \alpha_j(p)\bigr)
      + d_X\bigl(\alpha_j(p), \alpha_j(q)\bigr)
      + d_X\bigl(\alpha_j(q), \alpha(q^{+})\bigr) \\
    &\le d_{\bar A_p}(p^{+}, p)
      + d_X\bigl(\alpha_j(p), \alpha_j(q)\bigr)
      + d_{\bar A_q}(q, q^{+}) \\
    &< d_{\bar A_p}(p^{+}, p)
      + \frac{1}{L}|Q| - C
      + d_{\bar A_q}(q, q^{+}) \\
    &= d_{\bar A_p}(p^{+}, p)
      + \frac{1}{L}\bigl(d_{\bar A_p}(p, p^{-}) + |Q^{-}|
      + d_{\bar A_q}(q^{-}, q) \bigr)
      + d_{\bar A_q}(q, q^{+}) - C \\
    &\le d_{\bar A_p}(p^{+}, p)
      + d_{\bar A_p}(p, p^{-}) + \frac{1}{L}|Q^{-}| + d_{\bar A_q}(q^{-}, q)
      + d_{\bar A_q}(q, q^{+}) - C \\
    &= |\bar A_p| + \frac{1}{L}|Q^{-}| + |\bar A_q| - C \\
    &\le |\bar A_p| + \frac{1}{L}|Q_0^{-}| + |\bar A_q| - C \\
    &\le \frac{1}{L}|A_p| + \frac{1}{L}|Q_0^{-}| + \frac{1}{L}|A_q| - C \\
    &= \frac{1}{L}|Q_0^{+}| - C
  \end{align*}
  where the second inequality follows from \Rmkref{endpoint_distances}
  and the last inequality follows from \peqnref{qi_scale}.  By
  assumption, $Q$ nontrivially intersects at least one $\bar Q_i$,
  with $i < j$.  Let $m$ be minimal such that $Q$ nontrivially
  intersects $\bar Q_m$.  Then, since $Q$ intersects $\pc{0,j}$
  nontrivially, the image of $Q_0^{+} \to S_m$ must strictly contain
  $Q_m$.

  So, if $Q_0^{+} \to S_m$ was the inclusion of a geodesic segment, we
  would have $(p^{+},q^{+}) \in J_m$, which would contradict
  $d_{S_m}(p_m,q_m) = s_m$.  Thus $|Q_0^{+}| > \frac{|S_m|}{2}$.  But
  then
  \begin{align*}
    |Q_0^{+}|
    &> \frac{|S_m|}{2} \\
    &\ge \frac{|S|}{2} - \frac{1}{2}\sum_{i < m}|Q_i| \\
    &\ge \frac{|S|}{2} - \biggl(\frac{K-1}{K} \cdot
      \frac{L(3L-2)}{2(L-1)^2}\biggr)\frac{|S|}{2} - \frac{3LR}{L-1} \\
    &= \biggl(1 - \frac{K-1}{K} \cdot
      \frac{L(3L-2)}{2(L-1)^2} - \frac{6LR}{|S|(L-1)}\biggr)\frac{|S|}{2} \\
  \end{align*}
  by \Claimref{sumbound} while
  \[ |Q_0| \le \biggl(\frac{K -1}{K} \cdot
    \frac{L}{L-1}\biggr)\frac{|S|}{2} \] by \Claimref{onebound}.  So
  $|Q_0^{+}| \le |Q_0|$ would imply
  \[ 1 - \frac{K-1}{K} \cdot \frac{L(3L-2)}{2(L-1)^2} -
    \frac{6LR}{|S|(L-1)} < \frac{K -1}{K} \cdot \frac{L}{L-1} \]
  which is equivalent to the following inequality.
  \begin{equation}
    \eqnlabel{if_qp_le_q0}
    1 < \frac{K-1}{K} \cdot \frac{L(5L-4)}{2(L-1)^2} +
    \frac{6LR}{|S|(L-1)}
  \end{equation}
  By hypothesis and \Claimref{rational_functions}, we have
  $K < \frac{L(5L-4)}{3L^2 - 2}$ which is equivalent to
  $1 > \frac{K-1}{K} \cdot \frac{L(5L-4)}{2(L-1)^2}$ so if $|S| > M'$
  for some $M'$ depending only on $K$, $L$ and $R$ then we would have
  $1 > \frac{K-1}{K} \cdot \frac{L(5L-4)}{2(L-1)^2} +
  \frac{6LR}{|S|(L-1)}$ and this would contradict
  \peqnref{if_qp_le_q0}.  Hence, assuming $|S|$ is greater than this
  $M'$, we have $|Q_0^{+}| > |Q_0|$.

  Then if $Q_0^{+} \to S$ were the inclusion of a geodesic segment
  then we would have $(p^{+},q^{+}) \in J_0$ and this would contradict
  $d_S(p_0,q_0) = s_0$.  Thus $|Q_0^{+}| > \frac{|S|}{2}$.  On the
  other hand
  \begin{align*}
    &|Q_0^{+}| \\
    &= |Q_0^{-}| + |A_p| + |A_q| \\
    &\le |Q^{-}|  + \sum_{i<j} \bigl(|Q_i|-|\bar Q_i|\bigr) +
      |A_p| + |A_q| \\
    &\le \frac{|S_j|}{2} + \sum_{i<j} \bigl(|Q_i|-|\bar Q_i|\bigr) +
      |A_p| + |A_q| \\
    &= \frac{|S|}{2} +
      \frac{1}{2}\sum_{i<j} \bigl(|Q_i|-|\bar Q_i|\bigr) +
      |A_p| + |A_q| \\
    &\le \frac{|S|}{2} + \frac{1}{2}\sum_{i<j} |Q_i| + |A_p| + |A_q| \\
    &\le \frac{|S|}{2} +
      \biggl(\frac{K-1}{K} \cdot
      \frac{L(3L-2)}{4(L-1)^2}\biggr)|S|
      + \frac{3LR}{L-1}
      + \biggl(\frac{K -1}{K} \cdot \frac{L}{L-1}\biggr)|S| \\
    &= \frac{|S|}{2} +
      \frac{K -1}{K} \cdot \frac{L}{L-1} \cdot \biggl(
      \frac{3L-2}{4(L-1)} + 1\biggr)|S|
      + \frac{3LR}{L-1} \\
    &= \frac{|S|}{2} +
      \frac{K -1}{K} \cdot \frac{L}{L-1} \cdot
      \frac{7L- 6}{4(L-1)} \cdot |S|
      + \frac{3LR}{L-1} \\
    &= \biggl(\frac{1}{2} + \frac{K-1}{K} \cdot \frac{L(7L- 6)}{4(L-1)^2} 
       + \frac{3LR}{|S|(L-1)}\biggr) |S|
  \end{align*}
  where the last inequality follows from \Claimref{onebound} and
  \Claimref{sumbound}.  By hypothesis and
  \Claimref{rational_functions}, we have
  $K < \frac{L(7L-6)}{5L^2-2L-2}$, which is equivalent to
  $\frac{1}{2} + \frac{K-1}{K} \cdot \frac{L(7L- 6)}{4(L-1)^2} < 1$.
  Thus, if $|S| > M''$ for some $M''$ depending only on $K$, $L$ and
  $R$ then
  $\frac{1}{2} + \frac{K-1}{K} \cdot \frac{L(7L- 6)}{4(L-1)^2} +
  \frac{3LR}{|S|(L-1)} < 1$ and so $|Q_0^{+}| < |S|$ so that $Q_0^{+}$
  embeds in $S$.  In this case, the endpoints $p^{+},q^{+}$ of
  $Q_0^{+}$ in $S$ are at distance
  \[ d_S(p^{+},q^{+}) \ge \biggl(\frac{1}{2} - \frac{K-1}{K} \cdot
    \frac{L(7L- 6)}{4(L-1)^2} - \frac{3LR}{|S|(L-1)}\biggr)
    |S| \]
  but we also have
  \begin{align*}
    &d_X\bigl(\alpha(p^{+}),\alpha(q^{+})\bigr) \\
    &< \frac{1}{L}|Q_0^{+}| - C \\
    &\le \frac{1}{L}
      \biggl(\frac{1}{2} + \frac{K-1}{K} \cdot \frac{L(7L- 6)}{4(L-1)^2} 
      + \frac{3LR}{|S|(L-1)}\biggr) |S| - C
  \end{align*}
  which, by \Lemref{global_to_local} and $C \ge 4R$, implies
  \begin{align*}
    &\frac{1}{L}
      \biggl(\frac{1}{2} + \frac{K-1}{K} \cdot \frac{L(7L- 6)}{4(L-1)^2} 
      + \frac{3LR}{|S|(L-1)}\biggr) |S| \\
    &> \biggl(\frac{1}{2} - \frac{K-1}{K} \cdot
      \frac{L(7L- 6)}{4(L-1)^2} - \frac{3LR}{|S|(L-1)}\biggr) |S|
      - \frac{K-1}{K} \cdot \frac{|S|}{2}
  \end{align*}
  which is equivalent to the following inequality.
  \begin{equation}
    \eqnlabel{qplus_and_almost_isometric}
    \frac{K-1}{K} \cdot
    \frac{9L^2 - 3L -4}{2(L-1)^2}
    + \frac{6R(L+1)}{|S|(L-1)} > \frac{L-1}{L}
  \end{equation}
  By hypothesis, we have
  $K < \frac{L\bigl(9L^2 -3L -4\bigr)}{7L^3 + 3L^2 - 10L + 2}$ which
  is equivalent to
  $\frac{K-1}{K} \cdot \frac{9L^2 - 3L -4}{2(L-1)^2} < \frac{L-1}{L}$
  so if $|S| > M'''$ for some $M'''$ depending only on $K$, $L$ and
  $R$ then we would have
  $\frac{K-1}{K} \cdot \frac{9L^2 - 3L -4}{2(L-1)^2} +
  \frac{6R(L+1)}{|S|(L-1)} < \frac{L-1}{L}$ which contradicts
  \peqnref{qplus_and_almost_isometric}.

  Therefore, if $|S| > M = \max\{M', M'', M'''\}$, which depends only
  on $K$, $L$ and $R$ then assuming the existence of a $j$ for which
  $Q \not\subset \pc{0,j}$ leads us to a contradiction.
\end{proof}

\begin{claim}
  \claimlabel{greedy_tightening_disjoint} Let $K > 1$, let $L > 1$ and
  let $R \ge 0$.  There exists an $M > 0$ such that if $X$ is an
  $R$-rough geodesic metric space and $\alpha \colon S \to X$ is a
  $\frac{1}{K}$-almost isometric $R$-circle with $|S| > M$ and
  \[ K < \frac{L\bigl(9L^2 - 3L - 4\bigr)}{7L^3 + 3L^2 - 10L + 2} \]
  and $C \ge 4R$ then any $(L,C)$-greedy tightening sequence for
  $\alpha$ is completely disjoint.
\end{claim}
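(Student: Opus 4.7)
The plan is to establish the claim by induction on $j \ge 0$, showing that any $(L,C)$-greedy tightening sequence for $\alpha$ is disjoint up to $j$. I take $M$ to be the constant furnished by \Claimref{greedy_tightening_almost_disjoint}; since the hypotheses on $K$, $L$, $R$ and $C$ coincide with ours, the same $M$ works. The base case $j = 0$ is immediate, as $\pc{0,0} = S_0 = S$ by convention, so $Q_0 \subset \pc{0,0}$ holds trivially.

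For the inductive step I assume disjointness up to $j$ and aim to show $Q_j \subset \pc{0,j}$. If $\alpha_j$ is already an $(L,C)$-quasi-isometric embedding then $Q_j = \emptyset$ by the greedy construction and there is nothing to check; otherwise $J_j$ is nonempty and, by the construction in \Ssecref{greedy_tightening}, $(p_j, q_j)$ arises as a limit of pairs $(p_j^{(n)}, q_j^{(n)}) \in J_j$ with $d_{S_j}(p_j^{(n)}, q_j^{(n)}) \to s_j = d_{S_j}(p_j, q_j)$, and $Q_j$ is a geodesic segment from $p_j$ to $q_j$ in $S_j$. The inductive hypothesis supplies the ``disjoint up to $j$'' premise of \Claimref{greedy_tightening_almost_disjoint}, so for every $n$ each geodesic segment $Q^{(n)}$ from $p_j^{(n)}$ to $q_j^{(n)}$ in $S_j$ lies in $\pc{0,j}$. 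A routine Hausdorff-limit argument on the compact circle $S_j$ (passing to a subsequence and using uniqueness, respectively two-foldness, of the geodesic between a pair of non-antipodal, respectively antipodal, points) then upgrades this to $Q_j \subset \overline{\pc{0,j}}$.

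The main obstacle is the final step of promoting $Q_j \subset \overline{\pc{0,j}}$ to $Q_j \subset \pc{0,j}$, since $\pc{0,j}$ is the open complement in $S_j$ of the finitely many closed segments $\bar Q_i$, $i < j$, so its closure may properly contain it at the endpoints of these $\bar Q_i$. This subtlety is real because the limit pair $(p_j, q_j)$ need not itself lie in $J_j$: the derivation of \peqnref{alph_qi_alph_pi_distance} carries an extra $2R$ slack, so one cannot simply invoke \Claimref{greedy_tightening_almost_disjoint} at $(p_j, q_j)$. To rule out that $Q_j$ touches an endpoint of some $\bar Q_i$, I plan to invoke the maximality of $s_j$: if such contact occurred at, say, $p_j$, then one could extend $Q_j$ slightly across the adjacent $\bar Q_i$ in $S_j$ to a strictly longer geodesic whose endpoints form an element of $J_j$, by running the same endpoint-replacement bookkeeping as in the proof of \Claimref{greedy_tightening_almost_disjoint} and using the slack $C \ge 4R$ together with the strict inequalities of \Claimref{rational_functions} (at the possible cost of enlarging $M$ further, still depending only on $K$, $L$ and $R$). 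This would contradict $s_j = d_{S_j}(p_j, q_j)$, forcing $Q_j \subset \pc{0,j}$ and closing the induction.
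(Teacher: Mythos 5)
Your overall architecture (induction on $j$, which is just the contrapositive of the paper's minimal-counterexample argument; applying \Claimref{greedy_tightening_almost_disjoint} to the approximating pairs $(p_j^{(n)},q_j^{(n)}) \in J_j$; a Hausdorff-limit argument to restrict where $Q_j$ can meet $\bigcup_{i<j}\bar Q_i$) matches the paper, and you correctly isolate the real difficulty: contact of $Q_j$ with some $\bar Q_m$, $m<j$, at a shared endpoint, which is not excluded because $(p_j,q_j)$ only satisfies \peqnref{alph_qi_alph_pi_distance}, with its $2R$ slack. But the step you propose to close that case does not work. Suppose $p_j = q_m$ and you extend across $\bar Q_m$ inside $S_j$ by arc-length $r$. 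Recall $\bar Q_m$ is an $R$-rough geodesic flanked by constant paths of length $R$, so $\alpha_j$ restricted to $\bar Q_m$ is within an additive $O(R)$ of an isometric embedding. If $r \le R$ the image point does not move, so the best available bound on the new $d_X$-distance is unchanged and you must still absorb the $2R$ slack (or, using the pairs $(p_j^{(n)},q_j^{(n)})$, at least the $R$ from $d_X\bigl(\alpha_j(p_j),\alpha_j(p_j^{(n)})\bigr) \le d_{S_j}(p_j,p_j^{(n)})+R$), while the threshold $\frac{1}{L}d_{S_j}(\cdot,\cdot) - C$ grows by only $\frac{r}{L} \le \frac{R}{L} < R$. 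If $r > R$ you enter $\gamma'_m$, along which the provable bound on $d_X$ grows essentially at unit rate, while the threshold grows at rate $\frac{1}{L} < 1$. In neither regime can you manufacture a pair of $J_j$ at $d_{S_j}$-distance exceeding $s_j$; the maximality of $s_j$ cannot be contradicted at stage $j$, precisely because in $S_j$ the long segment $Q_m$ has already been collapsed to the short, almost isometrically mapped $\bar Q_m$.

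The paper's proof closes the endpoint case by a different mechanism, which your sketch lacks. Since each $Q_j^{(n)} \subset \pc{0,j}$, these segments can be transported back to $S_m$ via $\pc{0,j} \hookrightarrow S_m$, where they are still geodesic and the pairs $(p_j^{(n)},q_j^{(n)})$ still lie in $J_m$. Concatenating $Q_m^{(n)}$, the small gap at the common endpoint $q_m = p_j$, and $Q_j^{(n)}$ gives a segment $A^{(n)}$ in $S_m$ from $p_m^{(n)}$ to $q_j^{(n)}$ strictly longer than $|Q_m|$, and here the \emph{two} deficits of $-C$ (one from each pair in $J_m$) beat the single junction cost of roughly $R$ because $C \ge 4R$. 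Hence either $(p_m^{(n)},q_j^{(n)}) \in J_m$, contradicting $s_m = |Q_m|$, or $A^{(n)}$ exceeds half of $S_m$, forcing $|Q_m| + |Q_j| \ge \frac{|S_m|}{2}$, which is then ruled out quantitatively using \Claimref{onebound}, \Claimref{sumbound} and $K < \frac{L(7L-6)}{5L^2-2L-2}$ (supplied by \Claimref{rational_functions}) once $|S|$ is large. Your proposal contains neither the transport to stage $m$ (the only place where the uncollapsed $Q_m$ still contributes its own shortcut deficit $\bigl(1-\frac{1}{L}\bigr)|Q_m| - C$) nor the more-than-half-circle branch, so the crucial endpoint case remains open; this is a genuine gap, not a routine omission.
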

\begin{proof}
  Consider an $(L,C)$-greedy tightening sequence for $\alpha$ with
  notation as above.  For the sake of finding a contradiction, suppose
  $j \ge 1$ is the least integer with $Q_j \not\subset \pc{0,j}$.  As
  above we view the $\bar Q_i$ with $i < j$ as disjoint segments of
  $S_j$ with $S_j \setminus \bigcup_{i=0}^{j-1}\bar Q_m = \pc{0,j}$.

  Since $Q_j \not\subset \pc{0,j}$, we have
  $\bar Q_m \cap Q_j \neq \emptyset$, for some $m < j$.  Recall that
  $(p_j,q_j)$ is the limit of a sequence $(p_j^{(n)},q_j^{(n)})_n$
  in $J_j$.  For each $n$, let $Q_j^{(n)}$ be a geodesic segment
  between $p_j^{(n)}$ and $q_j^{(n)}$ in $S_j$.  By
  \Claimref{greedy_tightening_almost_disjoint}, we have
  $Q_j^{(n)} \subset \pc{0,j}$, so we have
  $\bar Q_m \cap Q_j \subset \{p_m,q_m\} \cap \{p_j,q_j\}$. Without
  loss of generality, we may assume $q_m = p_j$.  See
  \Figref{greedy_disjoint}.

\begin{figure}[ht]
  \newcommand{\circrad}{2cm}

  \newcommand{\arc}[2]{
    \draw (#1:\circrad) arc[start angle=#1, end angle=#2, radius=\circrad]}
  \newcommand{\replarc}[4]{
    \draw[train] (#1:\circrad) arc[start angle=#1, end angle=#2, radius=\circrad];
    \draw[red] (#1:\circrad) -- (#2:\circrad)
    coordinate[midway](m);
    \node at (#1/2+#2/2:\circrad+1ex+1em) {#3};
    \path (m) +(180+#1/2+#2/2:1em) coordinate (M);
    \node at (M) {#4}}

  \centering
  \begin{tikzpicture}
    \node at (-55:\circrad-1em) {$q_j$};
    \replarc{-55}{-20}{$Q_j$}{};
    \node at (-20:\circrad-2em) {$p_j=q_m$};
    \replarc{-20}{35}{$Q_m$}{};
    \node at (35:\circrad-1em) {$p_m$};
    \node[vertex] at (-55:\circrad) {};
    \node[vertex] at (-20:\circrad) {};
    \node[vertex] at (35:\circrad) {};
    
    \arc{35}{55};
    \replarc{55}{85}{}{};
    \arc{85}{110};
    \replarc{110}{135}{}{};
    \arc{135}{175};
    \replarc{175}{205}{}{};
    \arc{205}{245};
    \replarc{245}{275}{}{};
    \arc{275}{305};
  \end{tikzpicture}
  
  \caption{A greedy tightening sequence that is disjoint up to $j$ but
    not up to $j+1$ with $Q_j$ intersecting the prior $Q_i$ only at
    endpoints, as in the proof of
    \Claimref{greedy_tightening_disjoint}.}
  \figlabel{greedy_disjoint}
\end{figure}

  Since $Q_j^{(n)} \subset \pc{0,j}$, we may think of the $Q_j^{(n)}$
  as segments of $S_m$, by the embedding
  $\pc{0,j} \hookrightarrow S_m$.  Each $Q_j^{(n)}$ is a geodesic
  segment in $S_m$ since the complementary segment of $Q_j^{(n)}$ in
  $S_m$ is even longer than the complementary segment of $Q_j^{(n)}$
  in $S_j$.  Thus $(p_j^{(n)},q_j^{(n)})_n$ is a sequence in $J_m$.
  For each $n$, let $Q_m^{(n)}$ be a segment between $p_m^{(n)}$ and
  $q_m^{(n)}$ such that $(Q_m^{(n)})_n$ converges to $Q_m$ in
  Hausdorff distance.  The circular orders on the triples
  $(p_m^{(n)},q_m^{(n)},q_j^{(n)})$ and
  $(p_m^{(n)},p_j^{(n)},q_j^{(n)})$ are eventually constant and equal.
  Let $(A^{(n)})_n$ be a sequence of segments in $S_m$ from
  $p_m^{(n)}$ to $q_j^{(n)}$ such that $A^{(n)}$ eventually contains
  $q_m^{(n)}$ or, equivalently, eventually contains $p_j^{(n)}$.

  Let $\epsilon > 0$ satisfy $\epsilon < \frac{|Q_j|}{3}$ and
  $\epsilon \le \frac{LR}{L+1}$.  Then, for $n$ large enough,
  \begin{align*}
    |A^{(n)}|
    &\ge |Q_m^{(n)}| + |Q_j^{(n)}| - d_{S_m}(q_m^{(n)}, p_j^{(n)}) \\
    &> |Q_m| - \epsilon + |Q_j| - \epsilon - \epsilon \\
    &= |Q_m| + |Q_j| - 3\epsilon \\
    &> |Q_m|
  \end{align*}
  and
  \begin{align*}
    &d_X\bigl(\alpha_m(p_m^{(n)}), \alpha_m(q_j^{(n)})\bigr) \\
    &\le d_X\bigl(\alpha_m(p_m^{(n)}), \alpha_m(q_m^{(n)})\bigr)
      + d_X\bigl(\alpha_m(q_m^{(n)}), \alpha_m(p_j^{(n)})\bigr) \\
      &\;\;\;\;\;\; + d_X\bigl(\alpha_m(p_j^{(n)}), \alpha_m(q_j^{(n)})\bigr) \\
    &< \frac{1}{L}d_{S_m}(p_m^{(n)}, q_m^{(n)}) - C
      + d_{S_m}(q_m^{(n)}, p_j^{(n)}) + R
      + \frac{1}{L}d_{S_m}(p_j^{(n)}, q_j^{(n)}) - C \\
    &< \frac{1}{L}|Q_m| + \epsilon - C
      + \epsilon + R
      + \frac{1}{L}|Q_j| + \epsilon - C \\
    &= \frac{1}{L}(|Q_m| + |Q_j|) - 2C + R + 3\epsilon \\
    &< \frac{1}{L}(|A^{(n)}| + 3\epsilon) - 2C + R + 3\epsilon \\
    &= \frac{1}{L}|A^{(n)}| - 2C + R + \frac{3(L+1)}{L}\cdot\epsilon \\
    &\le \frac{1}{L}|A^{(n)}| - C
  \end{align*}
  since $C \ge 4R$.  So, if $A^{(n)}$ is a geodesic segment for
  arbitrarily large $n$ then, for some $n$, we would have
  $(p_m^{(n)},q_j^{(n)}) \in J_m$ and
  $d_{S_m}(p_m^{(n)},q_j^{(n)}) > |Q_m| = s_m$, which is a
  contradiction.  Thus eventually
  $|Q_m^{(n)}| + |Q_j^{(n)}| = |A^{(n)}| > \frac{|S_m|}{2}$ and so
  $|Q_m| + |Q_j| \ge \frac{S_m}{2}$.

  Then, by \Claimref{onebound}, we have
  \[ \frac{S_m}{2} \le 2\biggl(\frac{K - 1}{K} \cdot
  \frac{L}{L-1}\biggr)\frac{|S|}{2} \] while
  \begin{align*}
    \frac{|S_m|}{2}
    &\ge \frac{|S|}{2} - \frac{1}{2}\sum_{i < m}|Q_i| \\
    &\ge \frac{|S|}{2} - \biggl(\frac{K-1}{K} \cdot
      \frac{L(3L-2)}{2(L-1)^2}\biggr)\frac{|S|}{2} - \frac{3LR}{L-1} \\
    &= \biggl(1 - \frac{K-1}{K} \cdot
      \frac{L(3L-2)}{2(L-1)^2} - \frac{6LR}{|S|(L-1)}\biggr)\frac{|S|}{2}
  \end{align*}
  by \Claimref{sumbound}.  Combining these we obtain
  \[ \frac{K-1}{K} \cdot \frac{L(7L-6)}{2(L-1)^2} +
    \frac{6LR}{|S|(L-1)} \ge 1 \] but, by hypothesis and
  \Claimref{rational_functions}, we have
  $K < \frac{L(7L-6)}{5L^2 - 2L - 2}$, which is equivalent to
  $\frac{K-1}{K} \cdot \frac{L(7L-6)}{2(L-1)^2} < 1$ so, if $|S|$ is
  large enough (depending only on $K$, $L$ and $R$) then we have a
  contradiction.
\end{proof}

\section{The Fine Milnor-Schwarz Lemma}
\seclabel{fine_ms}

The Fine Milnor-Schwarz Lemma is a refinement of the Milnor-Schwarz
Lemma that gives finer control on the multiplicative constant of the
quasi-isometry.  In this section we will state and prove this version
of the Milnor-Schwarz Lemma.  As a consequence we will prove that
every strongly shortcut group has a strongly shortcut Cayley graph.  A
corresponding statement should hold for any rough approximability
invariant of metric spaces.

Let $(X,d)$ be a rough geodesic metric space.  Let $G$ be a group
acting coboundedly on $X$ by isometries.  Let
\[ S_{x_0,t} = \bigl\{ g \in G \sth d(x_0,gx_0) \le t \bigr\} \] for
$x_0 \in X$ and $t \in \R_{\ge 0}$.

\begin{rmk}
  \rmklabel{metric_proper_finite} If the action of $G$ is metrically
  proper then the $S_{x_0,t}$ are finite.
\end{rmk}

Let $\Gamma_{x_0,t}$ be the graph with vertex set $G$ and with an edge
of length $t$ between $g$ and $g'$ whenever $g' = gs$ for some
$s \in S_{x_0,t}$.  So when $S_{x_0,t}$ generates $G$, the graph
$\Gamma_{x_0,t}$ is the Cayley graph of $G$ for the generating set
$S_{x_0,t}$ with edges scaled by $t$.  Let $d_{x_0,t}$ be the graph
metric on $\Gamma_{x_0,t}$ where we set $d_{x_0,t}(g,h) = \infty$ when
$g$ and $h$ are in different components of $\Gamma_{x_0,t}$.  Then,
when $S_{x_0,t}$ generates $G$, the metric $d_{x_0,t}$ is the word
metric on $G$ for the generating set $S_{x_0,t}$ scaled by $t$.  Let
$f_{x_0,t} \colon (G,d_{x_0,t}) \to (X,d)$ be defined by
$f_{x_0,t}(g) = gx_0$.  Let $K_{x_0,t}$ be the infimum of all
$K \ge 1$ for which there exists some $C_K > 0$ such that $f_{x_0,t}$
is a $(K,C_K)$-quasi-isometry.  Note that if $f_{x_0,t}$ is not a
quasi-isometry (e.g. if $S_{x_0,t}$ does not generate $G$) then
$K_{x_0,t} = \infty$.

\begin{lem}[Fine Milnor-Schwarz Lemma]
  \lemlabel{fine_ms} Let $(X,d)$ be a rough geodesic metric space.
  Let $G$ be a group acting coboundedly on $X$ by isometries.  Let
  $x_0 \in X$ and let $K_{x_0,t}$ be defined as above for
  $t \in \R_{\ge 0}$.  Then $K_{x_0,t} \to 1$ as $t \to \infty$.
\end{lem}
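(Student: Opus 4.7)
The plan is to execute the standard Milnor--Schwarz argument with a carefully chosen step size along a rough geodesic, so that the multiplicative constant of the resulting quasi-isometry is an explicit function of $t$ that tends to $1$.

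First, I would fix $D > 0$ such that the orbit $Gx_0$ is $D$-dense in $X$, which is possible by coboundedness. This $D$ controls the error committed when replacing a point on a geodesic by a nearby orbit point. I will then establish the two directions of the quasi-isometry inequality. The lower bound $d(x_0, gx_0) \le d_{x_0,t}(e,g)$ is immediate: if $g = s_1 \cdots s_n$ with each $s_i \in S_{x_0,t}$, then the triangle inequality along the chain $x_0, s_1 x_0, s_1 s_2 x_0, \dotsc, gx_0$ together with $G$-invariance of $d$ gives $d(x_0, gx_0) \le \sum_i d(x_0, s_i x_0) \le nt = d_{x_0,t}(e,g)$.

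For the upper bound, restrict to $t > R + 2D$ and set $s = t - R - 2D > 0$. Given $g \in G$, take an $R$-rough geodesic $\gamma \colon [0,\ell] \to X$ from $x_0$ to $gx_0$ with $\ell = d(x_0, gx_0)$, subdivide at the points $\gamma(is)$ for $i = 0, 1, \dotsc, n-1$ with $n = \lceil \ell / s \rceil$, and take the last segment ending at $\gamma(\ell) = gx_0$. For $0 < i < n$, pick $g_i \in G$ with $d(g_i x_0, \gamma(is)) \le D$, and set $g_0 = e$, $g_n = g$. Each consecutive gap then satisfies
\[
  d(g_i x_0, g_{i+1} x_0) \;\le\; D + (s + R) + D \;=\; t,
\]
so $g_i^{-1} g_{i+1} \in S_{x_0,t}$ and hence $d_{x_0,t}(e,g) \le nt \le (t/s)\,d(x_0,gx_0) + t$. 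Combining this with the two bounds and invoking $D$-density for the rough onto condition, $f_{x_0,t}$ is a $\bigl(t/(t-R-2D),\, \max(t, D)\bigr)$-quasi-isometry.

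Since $t/(t-R-2D) \to 1$ as $t \to \infty$, we obtain $K_{x_0,t} \to 1$, as required. The only real subtlety is bookkeeping: making sure the final (possibly shorter) subsegment is handled the same way as the others, and noting that the argument nowhere requires $S_{x_0,t}$ to be finite, so metric properness is only needed for \Rmkref{metric_proper_finite} and not for the quasi-isometry conclusion itself.
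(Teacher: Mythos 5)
Your proof is correct and follows essentially the same route as the paper: subdivide a rough geodesic into steps of length about $t$ minus an error term, obtain a chain of elements of $S_{x_0,t}$, and conclude with a multiplicative constant of the form $t/(t-\mathrm{const})$ that tends to $1$. The only cosmetic difference is that the paper first observes that the orbit $Gx_0$ is itself rough geodesic (absorbing the coboundedness constant $D$ into the rough geodesicity constant $R$ and taking the rough geodesic inside the orbit), whereas you take a rough geodesic in $X$ and approximate the subdivision points by orbit points within $D$; both give the same estimate.
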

\begin{proof}
  Let $g \in G$.  We will prove that
  $d\bigl(f_{x_0,t}(1),f_{x_0,t}(g)\bigr) \le d_{x_0,t}(1,g)$.  If
  $d_{x_0,t}(1,g) = \infty$ there is nothing to show and so
  $d_{x_0,t}(1,g) = Mt$ for some $M \in \N_{\ge 0}$ and there is a
  combinatorial path defined by
  \[ 1 = g_0, g_1, g_2, \ldots, g_M = g \] in $\Gamma_{x_0,t}$.  The
  $g_{i-1}^{-1}g_i$ are contained in $S_{x_0,t}$ and so, by the
  triangle inequality, we have the following.
  \begin{align*}
    d\bigl(f_{x_0,t}(1),f_{x_0,t}(g)\bigr)
    &= d(x_0,gx_0) \\
    &\le d(x_0,g_1x_0) + d(g_1x_0,g_2x_0) + \ldots + d(g_{M-1}x_0,gx_0) \\
    &= d(x_0,g_1x_0) + d(x_0,g_1^{-1}g_2x_0) + \ldots + d(x_0,g_{M-1}^{-1}gx_0) \\
    &\le Mt \\
    &= d_{x_0,t}(1,g)
  \end{align*}

  We now establish a lower bound on
  $D = d\bigl(f_{x_0,t}(1),f_{x_0,t}(g)\bigr)$.  Since $G$ acts
  coboundedly on $X$, the orbit $Gx_0$ is a quasi-onto subspace of
  $X$.  Hence $Gx_0$ is roughly isometric to $X$ and so $Gx_0$ is also
  a rough geodesic metric space.  Let $R$ be the rough geodesicity
  constant of $Gx_0$.  Let $\alpha \colon [0,D] \to Gx_0$ be an
  $R$-rough geodesic from $f_{x_0,t}(1) = x_0$ to
  $f_{x_0,t}(g) = gx_0$.  Assume that $t > R$ and subdivide $[0,D]$
  into at most $\bigl\lceil\frac{D}{t-R}\bigr\rceil$ segments of
  length at most $t-R$.  Let $0 = a_0 < a_1 < a_2 < \cdots < a_M = D$
  be the endpoints of the segments and let $g_ix_0 = \alpha(a_i)$, for
  each $i$, with $g_0 = 1$ and $g_M = g$.  Then
  \begin{align*}
    d(x_0, g_i^{-1}g_{i+1}x_0)
    &= d(g_ix_0, g_{i+1}x_0) \\
    &= d\bigl(\alpha(a_i), \alpha(a_{i+1})\bigr) \\
    &\le |a_i - a_{i+1}| + R \\
    &\le t
  \end{align*}
  for each $i$.  Thus $g_i^{-1}g_{i+1} \in S_{x_0,t}$, for each $i$,
  and so
  \[ 1 = g_0, g_1, g_2, \ldots, g_{M-1}, g_M = g \] defines a
  combinatorial path in $\Gamma_{x_0,t}$.  Hence
  \begin{align*}
    d_{x_0,t}(1,g)
    &\le tM \\
    &\le t\Bigl\lceil\frac{D}{t-R}\Bigr\rceil \\
    &= t\biggl\lceil\frac{d\bigl(f_{x_0,t}(1),f_{x_0,t}(g)\bigr)}{t-R}\biggr\rceil \\
    &\le t\biggl(\frac{d\bigl(f_{x_0,t}(1),f_{x_0,t}(g)\bigr)}{t-R} + 1\biggr) \\
    &= \frac{t}{t-R}d\bigl(f_{x_0,t}(1),f_{x_0,t}(g)\bigr) + t
  \end{align*}
  and so we have
  $\frac{t-R}{t}d_{x_0,t}(1,g) - (t-R) \le
  d\bigl(f_{x_0,t}(1),f_{x_0,t}(g)\bigr)$.

  For $g,g' \in G$,
  \begin{align*}
    d\bigl(f_{x_0,t}(g),f_{x_0,t}(g')\bigr)
    &= d(gx_0, g'x_0) \\
    &= d(x_0,g^{-1}g'x_0) \\
    &= d\bigl(f_{x_0,t}(1),f_{x_0,t}(g^{-1}g')\bigr)
  \end{align*}
  and $d_{x_0,t}(g,g') = d_{x_0,t}(1,g^{-1}g')$ so we have
  \[ \frac{t-R}{t}d_{x_0,t}(g,g') - (t-R) \le
    d\bigl(f_{x_0,t}(g),f_{x_0,t}(g')\bigr) \le d_{x_0,t}(g,g') \]
  hence $1 \le K_{x_0,t} \le \frac{t-R}{t}$ when $t > R$.  This
  implies that $K_{x_0,t} \to 1$ as $t \to \infty$.
\end{proof}

\begin{cor}[Milnor-Schwarz Lemma]
  If the $G$-action on $X$ metrically proper then $G$ is finitely
  generated and, for any finite generating set $S$, the map
  \begin{align*}
    f_s \colon (G,d_S) &\to X \\
    g &\mapsto gx_0
  \end{align*}
  is a quasi-isometry, where $d_S$ is the word metric for the
  generating set $S$.
\end{cor}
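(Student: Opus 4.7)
The plan is to deduce this corollary essentially formally from the Fine Milnor-Schwarz Lemma together with \Rmkref{metric_proper_finite}. First I would pick any $x_0 \in X$ and invoke \Lemref{fine_ms} to obtain some $t > 0$ large enough that $K_{x_0,t} < \infty$. Since $K_{x_0,t} < \infty$ means that $f_{x_0,t} \colon (G, d_{x_0,t}) \to X$ is a genuine $(K, C)$-quasi-isometry for some finite $K, C$, this forces $d_{x_0,t}$ to take only finite values; equivalently, the graph $\Gamma_{x_0,t}$ is connected, which is the same as saying that $S_{x_0,t}$ generates $G$. By \Rmkref{metric_proper_finite}, metric properness guarantees that $S_{x_0,t}$ is finite, so $G$ is finitely generated by $S_{x_0,t}$.

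The second step is to transfer the quasi-isometry from the generating set $S_{x_0,t}$ to an arbitrary finite generating set $S$. For this I would use the standard fact that if $S$ and $T$ are both finite generating sets of $G$ then the identity map $(G, d_S) \to (G, d_T)$ is bilipschitz: one simply lets $\lambda$ be the maximum of $d_T(1,s)$ over $s \in S$ and $d_S(1,t)$ over $t \in T$, both of which are finite since $S$ and $T$ are finite. Applying this with $T = S_{x_0,t}$ (and noting that $d_{x_0,t} = t \cdot d_{S_{x_0,t}}$), the composition of this bilipschitz map with $f_{x_0,t}$ yields that $f_S \colon (G, d_S) \to X$, $g \mapsto g x_0$, is a quasi-isometry as well.

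There is no real obstacle here; the work has already been done inside the Fine Milnor-Schwarz Lemma. The only mild subtlety to state carefully is the equivalence between $K_{x_0,t}$ being finite and $S_{x_0,t}$ generating $G$, which follows directly from the convention established in the paragraph preceding \Lemref{fine_ms} that $K_{x_0,t} = \infty$ exactly when $f_{x_0,t}$ fails to be a quasi-isometry, which in turn happens precisely when $S_{x_0,t}$ fails to generate $G$ (since $f_{x_0,t}$ is automatically $1$-Lipschitz in one direction, the only way to fail being a quasi-isometry for \emph{some} $(K,C)$ is for $d_{x_0,t}$ to take the value $\infty$).
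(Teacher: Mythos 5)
Your proposal is correct and follows essentially the same approach as the paper: apply the Fine Milnor-Schwarz Lemma to obtain a finite $K_{x_0,t}$, deduce that $S_{x_0,t}$ generates and is finite by \Rmkref{metric_proper_finite}, observe that the $t$-scaling does not change the quasi-isometry type, and then pass to arbitrary finite generating sets via the standard bilipschitz equivalence of word metrics. The only difference is that you spell out the intermediate step "finite $K_{x_0,t}$ implies $d_{x_0,t}$ takes finite values implies $S_{x_0,t}$ generates" more explicitly than the paper does.
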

\begin{proof}
  By \Lemref{fine_ms}, if $t$ is large enough then
  $K_{x_0,t} < \infty$.  Thus $S_{x_0,t}$ generates $G$.  By
  \Rmkref{metric_proper_finite}, the generating set $S_{x_0,t}$ is
  finite so $G$ is finitely generated.  Since $K_{x_0,t} < \infty$ and
  since scaling the metric $d_S$ by a factor of $t$ preserves the
  quasi-isometry type, the map $f_S$ is a quasi-isometry for
  $S = S_{x_0,t}$.  But the identity map on $G$ is a bilipschitz
  equivalence from $(G, d_{S'})$ to $(G, d_{S''})$ where $S'$ and
  $S''$ are any two finite generating sets and $d_{S'}$ and $d_{S''}$
  are the corresponding word metrics.  Thus $f_S$ is a quasi-isometry
  for any generating set $S$.
\end{proof}

\begin{cor}
  Let $G$ be a group.  If $G$ acts metrically properly and coboundedly
  on a strongly shortcut rough geodesic metric space $X$ then $G$ has
  a finite generating set $S$ for which the Cayley graph of $(G,S)$ is
  strongly shortcut.  In particular, the group $G$ is strongly
  shortcut.
\end{cor}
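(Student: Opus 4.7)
The plan is to combine the Fine Milnor-Schwarz Lemma (\Lemref{fine_ms}) with the rough approximability invariance of the strong shortcut property (\Corref{rough_approx_inv}). First, since $X$ is strongly shortcut, \Corref{rough_approx_inv} furnishes a constant $L_X > 1$ such that any rough geodesic metric space $Y$ admitting an $(L_X, C)$-quasi-isometry up to scaling to $X$ is itself strongly shortcut.

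Next, fix $x_0 \in X$. The hypothesis that the action is metrically proper ensures, via \Rmkref{metric_proper_finite}, that each set $S_t$ is finite, so \Lemref{fine_ms} applies. Choose $t$ large enough that $K_t < L_X$ and set $S = S_t$. Then $S$ is a finite generating set of $G$, and the orbit map $f \colon (G, td_S) \to X$, $g \mapsto g x_0$, is a genuine $(L_X, C)$-quasi-isometry for some $C \ge 0$, which is to say that $f \colon (G, d_S) \to X$ is an $(L_X, C)$-quasi-isometry up to scaling.

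The discrete metric space $(G, d_S)$ is $1$-rough geodesic: given $g, h \in G$ with $d_S(g,h) = n$, a minimal word $s_1 \cdots s_n$ over $S$ with $g s_1 \cdots s_n = h$ produces a sequence $g_0 = g, g_1, \ldots, g_n = h$ with $d_S(g_i, g_j) = |i - j|$, and the function $\alpha \colon [0,n] \to G$ sending $s \in [0,n)$ to $g_{\lfloor s \rfloor}$ and $n$ to $h$ is a $1$-rough isometric embedding. Hence \Corref{rough_approx_inv} applies to give that $(G, d_S)$ is strongly shortcut as a rough geodesic metric space. Since $(G, d_S)$ is precisely the vertex set of the Cayley graph of $(G, S)$ with its subspace metric, \Rmkref{ss_asgraph_asmetric} yields that the Cayley graph of $(G, S)$ is strongly shortcut as a graph, and the ``in particular'' clause then follows because $G$ acts properly and cocompactly on this Cayley graph.

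No step presents a real obstacle: the heavy lifting is already done by \Lemref{fine_ms} (producing quasi-isometries with multiplicative constant arbitrarily close to $1$) and by \Corref{rough_approx_inv} (transporting the strong shortcut property along such quasi-isometries up to scaling). The only added observation is the trivial one that a group with its integer-valued word metric is $1$-rough geodesic.
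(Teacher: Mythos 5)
Your argument is correct and follows essentially the same route as the paper: \Corref{rough_approx_inv} supplies the constant $L_X$, \Lemref{fine_ms} (with \Rmkref{metric_proper_finite}) supplies an $(L_X,C)$-quasi-isometry up to scaling from $G$ with a word metric to $X$, and \Rmkref{ss_asgraph_asmetric} converts the conclusion into the statement about the Cayley graph. The only cosmetic difference is that you apply the invariance to the vertex set $(G,d_S)=\Gamma^0$ (checking directly that it is $1$-rough geodesic) rather than to the Cayley graph $\Gamma$ itself, which if anything makes the appeal to \Lemref{fine_ms} slightly more literal.
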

\begin{proof}
  By \Corref{rough_approx_inv}, there exists an $L_X > 1$ such that
  whenever $C > 0$ and $Y$ is a rough geodesic metric space and
  $f \colon Y \to X$ is an $(L_X,C)$-quasi-isometry up to scaling,
  then $Y$ is strongly shortcut.  But, by \Lemref{fine_ms}, there is a
  Cayley graph $\Gamma$ of $G$, a $C > 0$ and an
  $(L_X,C)$-quasi-isometry up to scaling $f \colon \Gamma \to X$.  So
  $\Gamma$ is strongly shortcut as a rough geodesic metric space.
  Then, by \Rmkref{ss_asgraph_asmetric}, the Cayley graph $\Gamma$ is
  strongly shortcut as a graph.
\end{proof}

\begin{cor}
  \corlabel{ss_group} Let $G$ be a group.  The following conditions
  are equivalent
  \begin{enumerate}
  \item $G$ is strongly shortcut.
  \item $G$ acts metrically properly and coboundedly on a strongly
    shortcut rough geodesic metric space.
  \item $G$ has a finite generating set $S$ for which the Cayley graph
    of $(G,S)$ is strongly shortcut.
  \end{enumerate}
\end{cor}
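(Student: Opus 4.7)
The plan is to prove the cyclic implications $(3) \Rightarrow (1) \Rightarrow (2) \Rightarrow (3)$, all of which are essentially immediate from results already in place. The substantial content is concentrated entirely in the preceding corollary (which handles $(2) \Rightarrow (3)$) and ultimately in \Corref{rough_approx_inv} together with the Fine Milnor-Schwarz Lemma \Lemref{fine_ms}, so the proof of \Corref{ss_group} itself is just an assembly argument.

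For $(3) \Rightarrow (1)$: if $S$ is a finite generating set of $G$ for which the Cayley graph $\Gamma = \mathrm{Cay}(G,S)$ is strongly shortcut as a graph, then $G$ acts on $\Gamma$ by left multiplication, and this action is proper and cocompact, so by the definition recalled in the introduction ($G$ is strongly shortcut iff it admits a proper cocompact action on a strongly shortcut graph), $G$ is strongly shortcut.

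For $(1) \Rightarrow (2)$: suppose $G$ acts properly and cocompactly on a strongly shortcut graph $\Gamma$. Viewing $\Gamma$ as a geodesic (hence $R$-rough geodesic for any $R \ge 0$) metric space with its path metric, \Rmkref{ss_asgraph_asmetric} tells us that $\Gamma$ is strongly shortcut as a rough geodesic metric space. A proper cocompact action by isometries on a graph is in particular metrically proper and cobounded, which gives condition (2).

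For $(2) \Rightarrow (3)$: this is precisely the content of the corollary immediately preceding \Corref{ss_group}, which was proved by invoking \Corref{rough_approx_inv} to produce the constant $L_X$, then applying the Fine Milnor-Schwarz Lemma to obtain a Cayley graph of $G$ mapping to $X$ as an $(L_X,C)$-quasi-isometry up to scaling, and finally using \Rmkref{ss_asgraph_asmetric} to transfer the strong shortcut property from the rough geodesic metric space structure to the graph structure. There is no real obstacle here since the heavy lifting was already carried out; the only thing to check is that the implications chain together to close the loop, which they do.
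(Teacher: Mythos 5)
Your proof is correct and follows the same assembly that the paper intends: $(2)\Rightarrow(3)$ is exactly the unnamed corollary just before \Corref{ss_group} (which itself rests on \Corref{rough_approx_inv} and \Lemref{fine_ms}), while $(3)\Rightarrow(1)$ is immediate from the definition of a strongly shortcut group and $(1)\Rightarrow(2)$ follows from \Rmkref{ss_asgraph_asmetric} together with the standard fact that a proper cocompact isometric action on a geodesic metric space is metrically proper and cobounded.
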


\section{Asymptotically \texorpdfstring{$\CAT(0)$}{CAT(0)} spaces}
\seclabel{as_cat0}

In this section we will apply the characterizations of
\Secref{characterizations} to prove that asymptotically $\CAT(0)$
rough geodesic metric spaces are strongly shortcut.  By the results of
\Secref{fine_ms}, this will imply that asymptotically $\CAT(0)$ groups
are strongly shortcut.

Asymptotically $\CAT(0)$ spaces and groups were first introduced and
studied by Kar \cite{Kar:2011}.  A metric space $X$ is
\defterm{asymptotically $\CAT(0)$} if every asymptotic cone of $X$ is
$\CAT(0)$.  A group is \defterm{asymptotically $\CAT(0)$} if it acts
properly and cocompactly on an asymptotically $\CAT(0)$ proper
geodesic metric space.  (Note that the condition that the action be on
a proper metric space does not make this definition more restrictive
than the definition given by Kar since the definition of proper action
in Kar \cite{Kar:2011} seems to be that of Bridson and Haefilger
\cite[Section~I.8.2]{Bridson:1999} \cite[Page 77]{Kar:2011} and any
geodesic metric space admitting a cocompact action that is proper by
this more restricted definition is a proper metric space.)  For an
introduction to $\CAT(0)$ geodesic metric spaces, see Bridson and
Haefliger \cite{Bridson:1999}.

\begin{thm}
  \thmlabel{as_cat0_space} Asymptotically $\CAT(0)$ rough geodesic
  metric spaces are strongly shortcut.
\end{thm}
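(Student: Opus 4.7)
The plan is to reduce the statement to a simple geometric fact about $\CAT(0)$ spaces by invoking the asymptotic cone characterization of the strong shortcut property that was already established.

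First I would appeal to \Corref{sshortcut_equiv}, specifically condition \pitmref{no_asymptotic_circle}: for a rough geodesic metric space $X$, being strongly shortcut is equivalent to no asymptotic cone of $X$ containing an isometric copy of the Riemannian circle of unit length. Since every asymptotic cone of $X$ is $\CAT(0)$ by hypothesis, it therefore suffices to prove the following purely local statement: a $\CAT(0)$ metric space does not contain an isometric copy of the unit length Riemannian circle.

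To prove this, I would argue by contradiction using unique geodesicity of $\CAT(0)$ spaces. Suppose $Y$ is $\CAT(0)$ and $\iota \colon S \hookrightarrow Y$ is an isometric embedding of the Riemannian circle $S$ of length $1$. Choose any antipodal pair $p, \bar p \in S$, so that $d_S(p,\bar p) = \tfrac{1}{2}$, and let $\gamma_1, \gamma_2 \colon [0,\tfrac{1}{2}] \to S$ be the two arc-length parametrizations of the two half-circle arcs from $p$ to $\bar p$. For any $s < t$ in $[0,\tfrac{1}{2}]$ we have $d_S\bigl(\gamma_k(s), \gamma_k(t)\bigr) = t-s$, since $t - s \le \tfrac{1}{2}$; because $\iota$ is isometric, the same equality holds in $Y$. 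Hence $\iota \comp \gamma_1$ and $\iota \comp \gamma_2$ are two geodesics in $Y$ from $\iota(p)$ to $\iota(\bar p)$, both of length $\tfrac{1}{2} = d_Y\bigl(\iota(p),\iota(\bar p)\bigr)$, and they are distinct because $\iota$ is injective and the two arcs meet only at their endpoints. This contradicts the standard fact that any two points of a $\CAT(0)$ space are joined by a unique geodesic.

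Combining the two steps, no asymptotic cone of $X$ can contain an isometric copy of the unit Riemannian circle, so $X$ is strongly shortcut by \Corref{sshortcut_equiv}. There is no real obstacle here: the heavy lifting was done in establishing the asymptotic cone characterization, and the remaining geometric input is just unique geodesicity in $\CAT(0)$ spaces. The only minor subtlety is verifying that each half-circle arc really becomes a geodesic in $Y$ (not merely a rectifiable path of length $\tfrac{1}{2}$), which is immediate from the isometry condition applied to pairs of points on a single arc.
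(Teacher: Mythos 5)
Your proposal is correct and follows essentially the same route as the paper: apply the asymptotic cone characterization from \Corref{sshortcut_equiv} and then rule out an isometrically embedded unit circle in a $\CAT(0)$ cone via uniqueness of geodesics. Your argument simply spells out the unique-geodesicity step (the two half-circle arcs giving two distinct geodesics between antipodal points) that the paper leaves implicit.
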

\begin{proof}
  By uniqueness of geodesics in $\CAT(0)$ geodesic metric spaces,
  there is no isometric copy of a Riemannian circle in the asymptotic
  cone of an asymptotically $\CAT(0)$ metric space $X$.  So, by
  \Thmref{not_sshortcut_equiv}, any asymptotically $\CAT(0)$ rough
  geodesic metric space is strongly shortcut.
\end{proof}

\begin{thm}
  \thmlabel{as_cat0_group} Asymptotically $\CAT(0)$ groups are
  strongly shortcut.
\end{thm}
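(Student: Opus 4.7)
The plan is to deduce this immediately from the results already in hand, namely \Thmref{as_cat0_space} and \Corref{ss_group}. Let $G$ be an asymptotically $\CAT(0)$ group. By definition this means $G$ admits a proper cocompact action by isometries on some asymptotically $\CAT(0)$ proper geodesic metric space $X$.

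First I would observe that a proper geodesic metric space is in particular a rough geodesic metric space (with rough geodesicity constant $R = 0$), and that a proper cocompact isometric action on such a space is automatically metrically proper and cobounded. Thus $G$ acts metrically properly and coboundedly on the rough geodesic metric space $X$.

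Next, since $X$ is asymptotically $\CAT(0)$ and rough geodesic, \Thmref{as_cat0_space} yields that $X$ is strongly shortcut. So $G$ acts metrically properly and coboundedly on a strongly shortcut rough geodesic metric space, which is condition (2) of \Corref{ss_group}. Invoking the implication (2) $\Rightarrow$ (1) of that corollary concludes that $G$ is strongly shortcut.

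There is essentially no obstacle here: the content of the statement has been absorbed into \Thmref{as_cat0_space} (curvature input) and \Corref{ss_group} (passage from space to group via the Fine Milnor-Schwarz Lemma). The only minor point to verify is that "properly and cocompactly" in the definition of asymptotically $\CAT(0)$ group implies "metrically properly and coboundedly," which is standard.
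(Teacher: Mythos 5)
Your proposal is correct and is essentially the same argument as the paper's: both observe that a proper cocompact action on a proper geodesic space is metrically proper and cobounded, apply \Thmref{as_cat0_space} to conclude the space is strongly shortcut, and then invoke \Corref{ss_group} to pass from the space to the group.
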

\begin{proof}
  A proper and cocompact action on a proper metric space is metrically
  proper and cobounded.  So, by \Thmref{as_cat0_space}, any asymptotically
  $\CAT(0)$ group $G$ acts metrically properly and cobounded on a
  strongly shortcut geodesic metric space.  Thus, by
  \Corref{ss_group}, the group $G$ is strongly shortcut.
\end{proof}

\bibliographystyle{abbrv}
\bibliography{nima,fsss}
\end{document}

%%% Local Variables:
%%% mode: latex
%%% TeX-master: t
%%% End: